\numberwithin{equation}{section}
\numberwithin{equation}{section}
\newtheorem{theorem}{Theorem}[section]
\newtheorem{lemma}[theorem]{Lemma}
\newtheorem{proposition}[theorem]{Proposition}
\newtheorem{corollary}[theorem]{Corollary}
\theoremstyle{definition}
\newtheorem{example}[theorem]{Example}
\newtheorem{definition}[theorem]{Definition}
\newtheorem{remark}[theorem]{Remark}
\newtheorem*{assumption}{Standing assumptions}
\def\E{{\mathbb E}}
\def\R{{\mathbb R}}
\def\N{{\mathbb N}}
\def\PP{{\mathbb P}}
\def\FF{{\mathbb F}}
\def\P{{\mathcal P}}
\def\RC{{\mathcal R}}
\def\V{{\mathcal V}}
\def\M{{\mathcal M}}
\def\L{{\mathcal L}}
\def\A{{\mathcal A}}
\def\F{{\mathcal F}}
\def\tr{{\mathrm{Tr}}}
\def\Var{{\mathrm{Var}}}
\def\Punif{\P_{\mathrm{Unif}}}
\def\Aunif{\A_U}
\def\Mui{M^{n,\bm{u}}_i} 
\def\Mni{M^{n,i}} 
\def\C{{\mathcal C}}
\title[A label-state formulation of stochastic graphon games]{A label-state formulation of stochastic graphon games and approximate equilibria on large networks}
\author{Daniel Lacker}
\author{Agathe Soret}
\email{daniel.lacker@columbia.edu}
\email{acs2298@columbia.edu}
\address{Department of Industrial Engineering \& Operations Research, Columbia University}
\thanks{D.L. is partially supported by the AFOSR Grant FA9550-19-1-0291 and the NSF CAREER award DMS-2045328.}
\begin{document} 

\begin{abstract}
This paper studies stochastic games on large graphs and their graphon limits. We propose a new formulation of graphon games based on a single typical player's label-state distribution. In contrast, other recently proposed models of graphon games work directly with a continuum of players, which involves serious measure-theoretic technicalities. In fact, by viewing the label as a component of the state process, we show in our formulation that graphon games are a special case of mean field games, albeit with certain inevitable degeneracies and discontinuities that make most existing results on mean field games inapplicable. Nonetheless, we prove existence of Markovian graphon equilibria under fairly general assumptions, as well as uniqueness under a monotonicity condition. Most imporantly, we show how our notion of graphon equilibrium can be used to construct approximate equilibria for large finite games set on any (weighted, directed) graph which converges in cut norm. The lack of players' exchangeability necessitates a careful definition of approximate equilibrium, allowing heterogeneity among the players' approximation errors, and we show how various regularity properties of the model inputs and underlying graphon lead naturally to different strengths of approximation. 
\end{abstract}

\maketitle

\section{Introduction}

This paper is about network-based generalizations of the now-standard \emph{mean field game} (MFG) framework.
The latter was introduced in \cite{huang2006large,lasry-lions} to describe the large-$n$ equilibrium behavior of certain $n$-player stochastic games. Remarkably, the limiting models in MFG theory are typically quite tractable, and for this reason MFG theory developed a rich mathematical theory and a broad range of applications. However, the MFG framework is fundamentally limited to  games in which \emph{players interact symmetrically}. On the one hand, MFG models can already incorporate heterogeneity in \emph{individual} characteristics (and are often known to economists as \emph{heterogeneous agent models}), in the sense that players may face independent sources of randomness and perhaps their own \emph{type} parameters. On the other hand, MFG  theory is not well suited to modeling heterogeneity in the \emph{interactions} between players, where distinct pairs of players have different interaction strengths. Heterogeneous interactions are the defining feature of \emph{network games}, a well-developed framework which is widely applied in very different contexts from MFG theory; see \cite{jackson2010social}.

The range of applicability of MFG theory would increase dramatically if it could incorporate non-trivial network structures, or heterogeneous interactions, while maintaining a tractable limiting (continuum) model. This is a challenging prospect, in general, because different $n$-player networks may lead to very different limits as $n\to\infty$, especially in \emph{sparse} networks \cite{fouque-recent,lacker2021case}. A natural first step is to understand the range of network models for which the usual MFG remains the correct limit. There is reason to expect that this is the case for sufficiently \emph{dense} and \emph{approximately regular} networks. This intuition was confirmed in our recent linear-quadratic case study \cite{lacker2021case}, and by Delarue \cite{delarue2017mean} in a model set on dense Erd\H{o}s-R\'enyi graphs; our Remark \ref{re:MFcase} below gives a result of this nature as well.
Similar ideas appeared in non-game-theoretic models of interacting particle systems with interactions governed by networks, for which recent work has identified a certain universality of the mean field limit. See \cite{bhamidi2019weakly,coppini2019long,coppini2019law,delattre2016note,luccon2020quenched} for diffusive dynamics and \cite{basak2017universality} for static Ising and Potts models.

There are many network models, however, for which the usual MFG limit is not correct. 
Several different groups of researchers have recently proposed new continuum models, as alternatives to the usual MFG, based on the notion of \emph{graphons}. Graphons are natural continuum limits for large dense graphs, and we refer to Lov\'asz \cite{lovasz2012large} for an overview. Essentially, a graphon is a symmetric measurable function $W : [0,1]^2 \to [0,1]$, with $W(u,v)$  representing the edge density between vertices $u$ and $v$.
For static games based on graphons, we refer to the recent work \cite{carmona2019stochastic,parise2019graphon,parise2021analysis}, and, for dynamic games, see \cite{cui2021learning,vasal2020sequential} for discrete time and \cite{aurell2021stochastic,bayraktar2022propagation,gao2020linear,tangpi2022optimal} for continuous time.
A related but distinct notion of \emph{graphon mean field games} was developed in a recent series of papers by Caines et al.\ \cite{caines2019graphon,caines2018graphon}, in which each node in the network contains a subpopulation with its own mean field of players. 
There have been similar developments for non-game-theoretic models of interacting diffusions, with recent work \cite{bayraktar2020graphon,bet2020weakly} developing a graphon-based limit theory.

The goal of this paper is to advance the theory of graphon-based analogues of mean field games, or \emph{graphon games}. 
Most importantly, we are able to achieve a level of tractability which is comparable to traditional mean field games, in the following sense. The mean field game framework is based on a fixed point problem describing the law of the state process $X=(X_t)_{t \in [0,T]}$ of one ``typical" player, which represents a significant dimension-reduction when compared to a large $n$-player game. On the contrary, prior graphon-based models work directly with a \emph{continuum} of players, which arguably does not provide a significant simplification, and which leads to serious technical challenges discussed below. The graphon game model that we propose is a fixed point problem for the joint law of $(U,X)$, where $X$ is the state process coupled with a Unif$[0,1]$ random variable $U$, interpreted as the ``vertex" or ``label" of the player in the graphon.

In fact, we show that our notion of graphon game is equivalent to a classical MFG model in which $(U,X_t)_{t \in [0,T]}$ is treated as the state process. Whereas the MFG model can be captured by a single forward-backward partial differential equation (PDE) system on $[0,T] \times \R^d$, prior graphon-based models involve a continuum of coupled PDEs, and our model can be captured by a single forward-backward PDE on $[0,T] \times \R^{d+1}$. 
Despite this equivalence, it is only in special situations that one can directly apply prior theorems from the MFG literature; the coefficients are discontinuous unless the graphon is a continuous function, and the diffusion coefficient of $(U,X_t)_{t \in [0,T]}$ is always degenerate. Hence, although we adapt known MFG methods for our proofs (mainly \cite{lacker2015mean}), we must tailor them to the graphon setting.
Moreover, the finite games we study, which are governed by general interaction matrices which converge in cut norm, are quite different from the finite game naturally associated with the equivalent MFG, and our finite games thus require a significantly more involved construction for approximate equilibria. See Section \ref{se:PDE} for details.

Working directly with a continuum of players, driven by a continuum of independent Brownian motions $(B^u)_{u \in [0,1]}$, raises  significant technical difficulties stemming from the fact that $u \mapsto B^u(\omega)$ is not Lebesgue measurable for a.e.\ $\omega$. In a linear-quadratic setting, this issue was confronted directly in  \cite{aurell2021stochastic} via sophisticated measure-theoretic machinery, namely the notion of \emph{Fubini extensions} due to \cite{sun2006exact}. In \cite{bayraktar2020graphon} the issue was carefully avoided by arguing that the \emph{laws} $\L(X^u)$ of the state processes $(X^u)_{u \in [0,1]}$ depend measurably on $u$, and this is good enough for their purposes. Other works such as \cite{bet2020weakly} do not explicitly address this issue.
By focusing on the joint law of $(U,X)$, we avoid the technical challenges of the continuum.
Of course, a joint law of $(U,X)$ with $U$ uniform can be identified with its disintegration, i.e., the conditional law of $X$ given $U$, but this conditional law is uniquely determined only up to a.e.\ equality. Our notion of graphon game thus encodes less information than a model with a true continuum of players, as we may make statements about \emph{almost every} player but not about \emph{every} player. But this minor loss of information brings significant mathematical advantages. First, it avoids the aforementioned measure-theoretic difficulties. Second, it permits a simple topological setting, allowing us to use the weak topology on $\P([0,1] \times \R^d)$, in which compacts are far more abundant when compared to the uniform or $L^p$ topologies on spaces of functions $[0,1] \to \P(\R^d)$ employed in some prior works (e.g., \cite{caines2018graphon}).

Using our new graphon game formulation, we prove several fundamental results under fairly general assumptions on the model inputs.
First, we prove existence of an equilibrium which is \emph{Markovian} in the sense that the control is a function of $(t,U,X_t)$. 
We also show uniqueness under a graphon version of the Lasry-Lions monotonicity assumption. See Section \ref{se:existence&uniqueness} for these results.
Our new framework allows us to handle, with relative ease, far more general setups than were considered in prior work. For instance, in prior work, the interactions are \emph{pairwise}, in the sense that the effect of the other players $j \neq i$ on a player $i$ is given by a quantity of the form $n^{-1}\sum_{j=1}^n \xi_{ij} h(X^i,X^j)$, where $\xi$ is an $n \times n$ interaction matrix. More generally, we are able to treat higher order interactions depending on the empirical measure $n^{-1}\sum_{j=1}^n \xi_{ij} \delta_{X^j}$, which admits a simple continuum analogue (defined in Section \ref{se:graphon-operators}) in terms of the joint law of $(U,X)$.

Our most important results justify our new formulation by showing that any graphon game equilibrium can be used to construct approximate equilibria for the $n$-player game, when the latter involves an interaction matrix which converges to the given graphon in the cut norm (or, more generally, in the strong operator topology, though this generalization does not complicate our proofs). This is the most challenging part of our work. The precise notion of \emph{approximate equilibrium} can take various forms: the $\epsilon^n_i$ error may be different for each player $i$, and the $\epsilon^n_i$ may vanish in an averaged or uniform sense,  depending on the structural assumptions (such as continuity) imposed on the graphon. See Section \ref{subse:main_res_approx_eq} for precise statements. Prior work on graphon games, with a few exceptions, has assumed the $n$-player game to be set on a specific  exchangeable random graph ``sampled" from the graphon in the usual manner, which enjoys particularly strong convergence properties as $n\to\infty$. It is more general, and also arguably more natural, to start from an interaction matrix (or graph) for $n$ players and see where it converges, rather than constructing a specific $n$-player network with a desired limit in mind. This, in a sense, makes the $n$-player game the starting point of the model, rather than the graphon game. This perspective is shared by \cite{bayraktar2022propagation,cui2021learning,gao2020linear}, though these papers impose various restrictions on the graphs and graphon that our main result (Theorem \ref{th:approxEQ-general}) does not need.

Lastly, to illustrate the relative simplicity of our framework, we study in Section \ref{se:LQ} a linear-quadratic model of flocking type similar to \cite{carmona2013mean,lacker2021case}. We explicitly solve the model in terms of a centrality index of a given graphon.

The short Section \ref{se:graphon-notation} introduces the basic notions of kernels and graphons that will be used in the paper. Then, Section \ref{se:mainresults} presents the main results in full detail.

\subsection*{Common notation}
We write $[n] := \{1,\ldots,n\}$ for $n \in \N$.
For a random variable $X$ taking values in a measurable space, we write $\L(X)$ for its law.
For a complete separable metric space $(E,d)$, we write $\M_+(E)$ for the space of nonnegative Borel measures of finite variation, and $\P(E)$ for the sets of probability measures.
We write $\langle \mu,\varphi\rangle  = \int_E\varphi\,d\mu$ for $\mu \in \M_+(E)$ and suitably integrable functions $\varphi$.
We equip $\M_+(E)$ with the usual topology of weak convergence, defined in duality with the space of bounded continuous functions.
This topology is also induced by the bounded-Lipschitz norm (see \cite[Theorem 8.3.2]{bogachev2007measure})
\begin{align}
\|\mu\|_{BL} := \sup\left\{\int_E \varphi \,d\mu : \varphi : E\to\R, \, |\varphi| \le 1, \, \sup_{x \neq y}\frac{|\varphi(x)-\varphi(y)|}{d(x,y)} \le 1 \right\}. \label{bounded-Lipschitz-norm}
\end{align}

We write $C([0,T];E)$ for the space of continuous functions $[0,T] \to E$, always equipped with the supremum distance $(x,x') \mapsto \sup_{t \in [0,T]}d(x_t,x'_t)$. 

We write $\mathrm{Unif}[0,1]$ to denote the uniform (Lebesgue) measure on $[0,1]$. Similarly, $\mathrm{Unif}(I)$ denotes the uniform probability measure on any interval $I$. For a Polish space $E$, let us write also $\Punif([0,1] \times E)$ for the set of Borel probability measures on $[0,1] \times E$ with uniform first marginal. Any $\mu \in \Punif([0,1] \times E)$ admits the disintegration $\mu(du,dx)=du\mu_u(dx)$, with $[0,1] \ni u \mapsto \mu_u \in \P(E)$ being Borel measurable and uniquely defined up to a.e.\ equality. The space $E$ will typically be either $\R^d$ or the path space $\C^d := C([0,T];\R^d)$.

\section{Kernels and graphons} \label{se:graphon-notation}

In this section, we give a brief summary  of the notion of graphon relevant to our work, most importantly introducing (in Section \ref{se:graphon-operators}) its associated operator which will play a central role. We borrow most terminology from Lov\'asz \cite{lovasz2012large}. A \emph{graphon} is typically defined as a symmetric measurable function $[0,1]^2\to[0,1]$. More generally, a \emph{kernel} is any element of $L^1[0,1]^2$, i.e., an integrable Borel-measurable real-valued function of $[0,1]^2$. 

We work with kernels belonging to $L^1_+[0,1]^2$, the set of non-negative elements of $L^1[0,1]^2 $.
We think of $[0,1]$ as indexing a continuum of possible locations or vertices, with $W(u,v)$ representing the (weighted) edge density between them.
 We notably do not require our kernels to be graphons (bounded or symmetric), which brings certain advantages in the examples below. Also, we work with \emph{labeled}  rather than \emph{unlabeled} kernels  \cite[Sections 8.2.1 and 8.2.2]{lovasz2012large}.

For $n \in\N$, the space of $n \times n$ matrices embeds into the space of kernels as follows.
For an $n \times n$ matrix $\xi$, we introduce the associated \emph{step kernel}
\begin{align}
\begin{split}
W_{\xi}(u,v) &:= \xi_{ij}, \quad \text{ for } (u,v) \in I^n_i \times I^n_j, \\ 
\text{where } I^n_i &:= [(i-1)/n,i/n), \ \text{ for } i=1,\ldots,n-1, \ \text{ and } I^n_n := [(n-1)/n,1]. 
\end{split} \label{def:stepgraphon}
\end{align}

\subsection{The cut norm}
Following \cite[Chapter 8.2]{lovasz2012large}, we define the \emph{cut norm} on $L^1[0,1]^2 $ by
\begin{align*}
\|W\|_{\square} := \sup_{S_1,S_2}\bigg| \int_{S_1}\int_{S_2} W(u,v)\,dudv\bigg|,
\end{align*}
where the supremum is over pairs of Borel sets $S_1,S_2 \subseteq [0,1]$. (Technically, this is merely a \emph{seminorm} unless we identify functions which agree a.e.)
The cut norm is clearly weaker than the $L^1$ norm,
\begin{align}
\|W\|_{\square} \le \|W\|_{L^1[0,1]^2 } := \int_0^1\int_0^1|W(u,v)|\,dudv, \label{cutnorm-le-L1norm}
\end{align}
The cut norm is convenient in part because many natural random graph models converge in cut norm but not in $L^1[0,1]^2 $. We provide below two examples where the convergence is well known:

\begin{example} \label{ex:ErdosRenyi}
Let $\xi^n$ be the adjacency matrix of an Erd\H{o}s-R\'enyi random graph $G(n,p_n)$. If $p_n=p$ is fixed as $n\to\infty$, then $W_{\xi^n}$ converges in cut norm to the constant graphon $W \equiv p$.
Allowing unbounded kernels allows one to treat sparser regimes: Instead of assuming $p_n$ to be constant, assume merely that $np_n \to \infty$ as $n\to\infty$. Then $W_{\xi^n/p_n}$ converges in cut norm to the constant graphon $W \equiv 1$. See  \cite[Theorem 2.14(b)]{borgs-chayes-cohn-zhao-I} for a more general result.
\end{example}

\begin{example} \label{ex:sampling}
Given a graphon $W$, i.e. a symmetric and measurable function from $[0,1]^2$ to $[0,1]$,  one can define two natural graphs on the vertex set $[n]$. First, let $U_1,\ldots,U_n \sim \mathrm{Unif}[0,1]$ be independent, and order them $U_{(1)} < \cdots < U_{(n)}$. Then, for $i \ne j$, either
\begin{enumerate}
\item connect vertices $(i,j)$ with probability $W(U_i,U_j)$, or
\item assign weight $W(U_i, U_j)$ to the edge between $(i,j)$.
\end{enumerate}

Note that the latter defines a weighted graph, the former a simple graph. The step kernel associated with the adjacency matrix converges in probability in cut norm to $W$ in either case, and in $L^1$-norm in the latter case.
See  \cite[Theorem 2.14]{borgs-chayes-cohn-zhao-I} for proof, along with related sparse graph constructions for kernels $W$ which are not necessarily bounded.
\end{example}

\subsection{Operators associated with kernels} \label{se:graphon-operators}
To a kernel $W \in L^1[0,1]^2 $ we associate the operator $\textsf{W} : L^\infty[0,1] \to L^1[0,1]$, defined by
\begin{align}
\textsf{W}\varphi(u) := \int_0^1 W(u,v)\varphi(v)\,dv. \label{def:Woperator-function}
\end{align}
The resulting operator norm is equivalent to the cut norm \cite[Lemma 8.11]{lovasz2012large}:
\begin{align}
\|W\|_{\square} \le \|\textsf{W}\|_{\infty \to 1} &\le 4\|W\|_{\square}, \label{ineq:cutnorm-opnorm} \\
\text{where } \|\textsf{W}\|_{\infty\to 1} &:= \sup\{\|\textsf{W}\varphi\|_{L^1[0,1]} : \varphi \in L^\infty[0,1], \, |\varphi| \le 1\}. \label{def:opnorm}
\end{align}
We work most often with the \emph{strong operator topology} for operators on $L^\infty[0,1] \to L^1[0,1]$: We say that a sequence $W_n \in L^1[0,1]^2$ converges in the strong operator topology to $W \in L^1[0,1]^2$ if $\|\textsf{W}_n\varphi-\textsf{W}\varphi\|_{L^1[0,1]} \to 0$ for every $\varphi \in L^\infty[0,1]$.
Convergence in cut norm implies convergence in strong operator topology, by \eqref{ineq:cutnorm-opnorm}.
While the cut norm is the most common in the graphon literature, working more generally with the strong operator topology leads to no increase in difficulty in any of our proofs.

A key object in our paper is a more general operator associated with a kernel $W \in L^1_+[0,1]^2$. Given a Polish space $E$ and a probability measure ${m}$ on $[0,1] \times E$, we define a measure-valued function $\textsf{W}{m} : [0,1] \to \M_+(E)$ by 
\begin{align}
\textsf{W}{m}(u) := \int_{[0,1] \times E} W(u,v) \delta_x \, {m}(dv,dx). \label{def:Woperator-measure}
\end{align}
To be clear, this measure acts on a bounded measurable function $\varphi :E \to \R$ by
\begin{align*}
\langle \textsf{W}{m}(u),\varphi\rangle = \int_{[0,1] \times E} W(u,v) \varphi(x) \, {m}(dv,dx).
\end{align*}
Note that if $W \equiv 1$ then $\textsf{W}{m}(u)$ is exactly the second marginal of ${m}$.

To foreshadow how we will use this operator, think of the measure $\textsf{W}{m}(u)$ as representing a continuous version of the neighborhood empirical measure around a vertex $u$. Indeed, suppose $x_1,\ldots,x_n \in E$ represent state variables of players $1,\ldots,n$, and let $\xi=(\xi_{ij})$ denote an $n \times n$ matrix representing interactions. The influence of the other players on player $i$ is given by the \emph{neighborhood empirical measure}
\begin{align*}
M_i = \frac{1}{n}\sum_{j=1}^n \xi_{ij} \delta_{x_j} \in \M_+(E).
\end{align*}
Suppose $u_1,\ldots,u_n \in [0,1]$ represent labels of the $n$ players, with $u_i \in I^n_i$ for each $i$.
The label-state empirical measure of the entire population is given by
\begin{align*}
M = \frac{1}{n}\sum_{i=1}^n\delta_{(u_i,x_i)} \in \P([0,1] \times E).
\end{align*}
Using the step kernel from \eqref{def:stepgraphon}, the function $\textsf{W}_{\xi}M$ then encodes all of the neighborhood empirical measures in terms of the label-state empirical measure, in the sense that 
\begin{align*}
\textsf{W}_{\xi}M(u_i) &= \frac{1}{n}\sum_{j=1}^n W_{\xi}(u_i,u_j)\delta_{x_j} = \frac{1}{n}\sum_{j=1}^n \xi_{ij} \delta_{x_j} = M_i.
\end{align*}

\begin{remark}
The two operators both denoted $\textsf{W}$, defined in \eqref{def:Woperator-function} to act on real-valued functions and in \eqref{def:Woperator-measure} to act on measures, are not as different as they might at first appear. First, note that the former definition extends readily to functions $\varphi$ with values in suitable vector spaces. Suppose ${m}$ has uniform first marginal, so that by disintegration we may write ${m}(du,dx)=du{m}_u(dx)$. We may then write $\textsf{W}{m}(u) = \int_0^1 W(u,v){m}_v\,dv$, which has the form of \eqref{def:Woperator-function} but with the measure-valued function ${m}_\cdot$ in place of the scalar function $\varphi$.
\end{remark}

\begin{example}[Laplacian matrices] \label{ex:Laplacian}
A natural setting, studied for instance in \cite{delarue2017mean,lacker2021case}, arises from the so-called \emph{random walk Laplacian} of a connected graph on $n$ vertices. Let us write $i \sim j$ if two vertices $i$ and $j$ are neighbors in this graph, and let $d_i$ denote the degree (number of neighbors) of vertex $i$. Then $\xi$ is defined by setting $\xi_{ij}=n/d_i$ if $i\sim j$ and $\xi_{ij}=0$ otherwise. In this case, $M_i=\frac{1}{d_i}\sum_{j \sim i}\delta_{x_j}$ is the uniform measure over the states of the neighbors of $i$.
\end{example}

\section{Main results}\label{se:mainresults}

In this section we define precisely the $n$-player and graphon game models. 
The following assumptions are in force throughout the paper.

\begin{assumption} 
We are given dimensions $d,d_0 \in \N$, a time horizon $T > 0$, a compact metric space $A$ representing the set of actions, and bounded continuous functions
\begin{equation*}
    \begin{array}{ll}
        b : [0,T] \times \R^d \times A \to \R^d  & \qquad  \sigma :  [0,T] \times \R^d \to \R^{d\times d_0},\\
        f : [0,T] \times \R^d \times \M_+(\R^d) \times A \to \R  & \qquad g : \R^d \times \M_+(\R^d) \to \R.
    \end{array}
\end{equation*}
Assume that $\sigma$ is Lipschitz and that $\sigma\sigma^\top$ is uniformly nondegenerate, i.e., bounded from below in semidefinite order by a positive constant times the identity matrix.
Assume further that for each $(t,x,m) \in [0,T] \times \R^d \times \M_+(\R^d)$ the following set  is convex:
\begin{align}
\{(b(t,x,a),z) : a \in A, \, z \le f(t,x,m,a)\} \subset \R^d \times \R. \label{asmp:convex}
\end{align}
Finally, we are given an initial distribution $\lambda \in \Punif([0,1] \times \R^d)$.
\end{assumption}

These assumptions can certainly be generalized, particularly the boundedness. We prefer to minimize technicalities in order to focus on the new features of the graphon setting.
The final convexity assumption is common in the control literature; it holds when $A$ is a convex subset of a vector space, $b$ is affine in $a$, and $f$ is concave in $a$, which includes in particular the setting of \emph{relaxed controls} to which one can always lift the problem if the convexity assumption is not initially satisfied \cite{lacker2015mean}.

The most notable restriction is that we do not include any interaction term within the functions $b$ or $\sigma$. This significantly simplifies the existence theorem and the approximate equilibrium construction. The former would easily generalize, but the latter would require a satisfactory limit theory for graphon-based interacting SDEs. Such a limit theory is a significant undertaking in its own right and has seen only very recent development, so far only for scalar interactions. By excluding interactions from $(b,\sigma)$, we avoid this separate issue and focus more on the game-theoretic aspects of graphon models.

We work with Markovian controls throughout the paper, but the framework adapts easily to different kinds of controls, such as open-loop.

\subsection{Finite games} \label{se:finitegame}

Let $n\in \N$ denote the number of players. Each player may choose a control from $\A_n$, the set of measurable functions from $[0,T] \times (\R^d)^n \to A$. For any vector of controls $\bm\alpha =(\alpha_1,\ldots,\alpha_n) \in \A_n^n$, there exists a unique solution $\bm{X}^n=(X^{n,1},\ldots,X^{n,n})$ of the  SDE system
\begin{align*}
dX^{n,i}_t = b(t, X^{n,i}_t, \alpha_i(t,\bm{X}^n_t)) dt + \sigma(t, X^{n,i}_t) dB^i_t, \qquad X^{n,i}_0=x^{n,i}_0,
\end{align*}
where $B^1,\ldots,B^n$ are independent $d_0$-dimensional Brownian motions, and $x^{n,i}_0$ are given initial conditions.

The  boundedness of $b$ and Lipschitz continuity of $\sigma$ ensure that this SDE system admits a unique strong solution \cite[Theorem 1]{veretennikov1981strong}.

Let $\xi^n=(\xi^n_{ij})$ denote an $n \times n$ matrix  with nonnegative entries, called the \emph{interaction matrix}. Throughout this paper we will assume that $\xi^n_{ii}=0$ for all $i$; if $\xi^n$ is the adjacency matrix of a (weighted) graph, this is equivalent to assuming that there are no self-loops. This assumption is natural and simplifies the exposition, but it is not hard to generalize.
A key role is played by the \emph{neighborhood empirical measures}, defined for each player $i\in [n]$ by
\begin{align}
M^{n,i}_t = \frac{1}{n}\sum_{j=1}^n \xi^n_{ij} \delta_{X^{n,j}_t}, \label{nbd-empiricalmeasure-nplayers}
\end{align}
which is a random element of $\M_+(\R^d)$.
For $\bm\alpha =(\alpha_1,\ldots,\alpha_n) \in \A_n^n$, the objective function of each player $i \in [n]$ is defined by 
\begin{align}\label{def:J_i}
J_i(\bm{\alpha}) &:= \E\left[ \int_0^T f(t,X^{n,i}_t,M^{n,i}_t,\alpha_i(t,\bm{X}^n_t))\,dt + g(X^{n,i}_T,M^{n,i}_T)\right].
\end{align}
For $\bm{\epsilon}=(\epsilon_1,\ldots,\epsilon_n) \in [0,\infty)^n$, an \emph{$\bm{\epsilon}$-Nash equilibrium} is defined as any $\bm\alpha =(\alpha_1,\ldots,\alpha_n) \in \A_n^n$ satisfying for all $i \in [n]$
\begin{align*}
J_i(\bm{\alpha}) &\ge \sup_{\beta \in \A_n} J_i(\alpha_1,\ldots,\alpha_{i-1},\beta,\alpha_{i+1},\ldots,\alpha_n) - \epsilon_i.
\end{align*}

We will not state any theorems about $n$-player games until Section \ref{subse:main_res_approx_eq}, but it will inform our definition of the appropriate graphon model in the following section.

\subsection{Graphon games} \label{se:graphongame}

For a kernel $W \in L^1_+[0,1]^2 $, we define the \emph{graphon game} associated with $W$ as follows. Let $\Aunif$ denote the set of measurable functions $[0,T] \times [0,1] \times \R^d \to A$. Let $(\Omega,\F,\FF,\PP)$ be a filtered probability space supporting a $d_0$-dimensional $\FF$-Brownian motion $B$ and $\F_0$-measurable random variables $U$ and $X_0$ taking values in $[0,1]$ and $\R^d$, respectively. The given joint law of $(U,X_0)$ is denoted $\lambda$, and its first marginal is assumed to be uniform; that is, $U \sim$ Unif$[0,1]$.  For $\alpha \in \Aunif$, the state process $X$ is the unique solution of the SDE
\begin{align}
dX^\alpha_t = b(t,X^\alpha_t,\alpha(t,U,X^\alpha_t))dt + \sigma(t,X^\alpha_t)dB_t, \quad X^\alpha_0=X_0. \label{def:SDEgraphongame}
\end{align}
Strong well-posedness of this SDE follows easily from \cite[Theorem 1]{veretennikov1981strong}, under our standing assumptions.
Recall in the following the meaning of $\textsf{W}\mu_t$, defined in \eqref{def:Woperator-measure}, as well as the notation $\Punif([0,1] \times E)$ for measures on $[0,1] \times E$ with uniform first marginal.

Now, to define our notion of equilibrium, suppose we are given a measure flow $\mu_\cdot =(\mu_t)_{t \in [0,T]} \in C([0,T];\Punif([0,1] \times \R^d))$, representing the label-state joint distribution at each time.
In response to this given $\mu_\cdot$, the objective of a typical player is to choose $\alpha \in \Aunif$ to maximize
\begin{equation*}
J_W(\mu_\cdot,\alpha) := \E \left[\int_0^T f(t, X^\alpha_t, \textsf{W}\mu_t(U), \alpha(t,U,X^\alpha_t)) dt + g(X^\alpha_T, \textsf{W}\mu_T(U)) \right].
\end{equation*}
The measure $\textsf{W}\mu_t(U)$ here is the natural graphon analogue of the neighborhood empirical measure, as discussed in Section \ref{se:graphon-operators}, when a player is given the uniformly random label $U$.

\begin{definition}
We say that $\mu_\cdot\in C([0,T];\Punif([0,1] \times \R^d))$ is a \emph{(Markovian) $W$-equilibrium} (or a \emph{graphon equilibrium} when $W$ is understood) if there exists $\alpha^* \in \Aunif$ satisfying
\begin{align*} 
J_W(\mu_\cdot,\alpha^*) = \sup_{\alpha \in \Aunif}J_W(\mu_\cdot,\alpha), \quad \text{and} \quad \mu_t=\L(U,X^{\alpha^*}_t), \ \forall t \in [0,T].
\end{align*}
Any such $\alpha^*$ is called an \emph{equilibrium control for $\mu_\cdot$}.
\end{definition}

We might describe this fixed point problem loosely but compactly as follows:
\begin{equation}
    \left\{ \begin{array}{rl}
        \alpha^* \in \arg\!\max_{\alpha} &\E \left[\int_0^T f(t, X^\alpha_t, \textsf{W}\mu_t(U), \alpha_t) dt + g(X^\alpha_T, \textsf{W}\mu_T(U)) \right] \\
        \mbox{s.t.} & dX^\alpha_t = b(t,X^\alpha_t,\alpha_t) dt + \sigma(t, X^{\alpha}_t) dB_t, \\
        & \mu_t = \L(U, X^{\alpha}_t),  \ \ (U, X_0) \sim \lambda.
    \end{array}
    \right. \label{def:graphonEQ-compact}
\end{equation}

For comparison, we also state the classical definition of a mean field game equilibrium, in the case where there is no graphon present (or $W \equiv 1$). Note that the space $\A_1$ of measurable functions $[0,T] \times \R^d \to A$ may be identified with the subspace of $\Aunif$ consisting of controls that do not depend on the uniform variable $U$, i.e., functions of the form $\alpha(t,u,x)=\tilde\alpha(t,x)$. We say that $\nu_\cdot \in C([0,T];\P(\R^d))$ is a \emph{(Markovian) mean field equilibrium} if there exists $\alpha^* \in \A_1$ satisfying
\begin{align*} 
J_1(\nu_\cdot,\alpha^*) = \sup_{\alpha \in \A_1}J_1(\nu_\cdot,\alpha), \quad \text{and} \quad \nu_t=\L(X^{\alpha^*}_t) \ \ \forall t \in [0,T],
\end{align*}
where we define
\begin{align*}
J_1(\nu_\cdot,\alpha) := \E \left[\int_0^T f(t, X^\alpha_t, \nu_t , \alpha(t, X^\alpha_t)) dt + g(X_T, \nu_T) \right].
\end{align*}
When $W\equiv 1$, recall that $\textsf{W}{m}$ reduces to the second marginal of ${m}(dv,dx)$; it follows  that if $\mu_\cdot$ is a $W$-equilibrium then the second marginals form a mean field equilibrium. The converse is true but somewhat more subtle, because controls for mean field equilibria are allowed to depend on the auxiliary random variable $U$. See Proposition \ref{prop:constdegree} for a more general relationship between these two equilibrium concepts.

\subsection{Existence and uniqueness of equilibria} \label{se:existence&uniqueness}

Recall in the following that we are always working under the standing assumptions stated at the beginning of Section \ref{se:mainresults}.
The following is proven in Section \ref{se:existence_GMFG_sol}, following the strategy of \cite{lacker2015mean}.

\begin{theorem} \label{th:existence}
Let $W \in L^1_+[0,1]^2 $. Then there exists a $W$-equilibrium.
\end{theorem}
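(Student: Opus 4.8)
The plan is to recast the fixed-point problem as the search for a fixed point of a set-valued best-response map on the space $C([0,T];\Punif([0,1]\times\R^d))$ and to apply the Kakutani--Fan--Glicksberg theorem, following the compactification/relaxed-control strategy of \cite{lacker2015mean}. First I would pass to relaxed controls: replace $\Aunif$ by the set of measurable maps from $[0,T]\times[0,1]\times\R^d$ into $\P(A)$ (equivalently, consider the state dynamics driven by a relaxed control and the running reward integrated against it), so that the set of attainable joint laws of $(U,X^\alpha_\cdot,\text{control})$ becomes convex. The convexity assumption \eqref{asmp:convex} guarantees, via the usual "Filippov-type" selection argument, that optimizing over relaxed controls does not change the value and that an optimal relaxed control can be realized by a Markovian (ordinary) control, so a relaxed equilibrium yields a genuine Markovian $W$-equilibrium at the end.

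The key structural input is that, for a \emph{fixed} candidate flow $\mu_\cdot\in C([0,T];\Punif([0,1]\times\R^d))$, the map $u\mapsto \textsf{W}\mu_\cdot(u)$ is a fixed measurable (measure-valued) function, so the player's problem is an ordinary Markovian control problem whose coefficients $b$, $\sigma$ and rewards $f(t,x,\textsf{W}\mu_t(U),\cdot)$, $g(x,\textsf{W}\mu_T(U))$ depend on the extra "type" variable $U$ in a frozen way. Because $U$ enters only as a static parameter carried along by the degenerate first coordinate, one solves the control problem $\mu_\cdot$-conditionally on $\{U=u\}$ and assembles the responses; the set $R(\mu_\cdot)$ of resulting equilibrium-candidate flows $\L(U,X^{\alpha^*}_\cdot)$ (over all optimal $\alpha^*$) is the best-response correspondence. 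The steps are then: (i) show that the set of all attainable $\L(U,X^\alpha_\cdot)\in\P([0,1]\times\C^d)$, as $\alpha$ ranges over relaxed controls and $\mu_\cdot$ over a suitable a priori compact set, is tight and relatively compact in the weak topology — this uses boundedness of $b$, Lipschitz/nondegeneracy of $\sigma$, the fixed law $\lambda$ of $(U,X_0)$, and standard moment estimates giving $C([0,T];\cdot)$-tightness; (ii) restrict attention to a closed convex compact subset $\K\subset C([0,T];\Punif([0,1]\times\R^d))$ that $R$ maps into itself (the time-marginals of the tight family from (i)); (iii) verify $R(\mu_\cdot)$ is nonempty (an optimal relaxed control exists by compactness of the relaxed-control set plus upper semicontinuity of the objective), convex (convexity of the relaxed formulation), and has closed graph (continuity of $\mu_\cdot\mapsto \textsf{W}\mu_\cdot$ in the relevant topology, plus stability of optimality under weak limits — this is where one invokes the \cite{lacker2015mean}-style semicontinuity lemmas); (iv) apply Kakutani--Fan--Glicksberg to get $\mu_\cdot\in R(\mu_\cdot)$; (v) de-relax to obtain a Markovian equilibrium control.

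The main obstacle is step (iii), specifically the closed-graph property, and it is precisely here that the graphon setting departs from the classical MFG: the reward depends on $\mu_t$ only through $\textsf{W}\mu_t(U)$, and the operator $\textsf{W}$ need not be continuous for the \emph{weak} topology on $\P([0,1]\times\R^d)$ when $W$ is a general (discontinuous, unbounded) kernel — evaluating $\textsf{W}\mu_t$ at the single point $U=u$ is only an a.e.-defined object. The remedy is to exploit that we only ever integrate $\textsf{W}\mu_t(U)$ against the uniform law of $U$: one shows that $\mu\mapsto \int_{[0,1]}\varphi(u,\textsf{W}\mu(u))\,du$ is continuous for bounded continuous $\varphi$ using that $\textsf{W}$ is bounded $L^\infty\to L^1$ and that weak convergence of $\mu^k\to\mu$ in $\Punif$ forces $\int_0^1 W(u,v)\psi(x)\,\mu^k_u(dx)\,du\to\int_0^1 W(u,v)\psi(x)\,\mu_u(dx)\,du$ for each fixed $v$ (and $v$-a.e.), then a dominated-convergence/Fubini argument. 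This "integrate against $U$" device — the whole point of the label-state formulation — is what keeps the fixed-point machinery in the easy weak topology and is the technical heart of the proof; the remaining estimates (tightness, existence of optimizers, de-relaxation) are adaptations of \cite{lacker2015mean} carried out with $(U,X_t)$ in place of $X_t$ and with the degenerate extra coordinate handled by conditioning on $U$.
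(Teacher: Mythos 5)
Your proposal follows essentially the same route as the paper's proof: pass to relaxed controls to convexify the attainable set, apply Kakutani--Fan--Glicksberg to the best-response correspondence on a compact convex subset of $\Punif([0,1]\times\C^d)$, and then de-relax to a Markovian control via a Markovian projection (mimicking theorem) plus a Filippov-type measurable selection under \eqref{asmp:convex}. You also correctly identify the graphon-specific technical heart — continuity of $\mu\mapsto\textsf{W}\mu(u)$ for a.e.\ $u$ in the weak topology on $\Punif$, exploiting the fixed uniform first marginal and $W(u,\cdot)\in L^1[0,1]$ — which is exactly the content of Lemma \ref{le:Wcontinuity}(1) combined with Lemma \ref{le:Punif-continuity} in the paper (your sketch of this step is slightly garbled in how $u$ and $v$ are used, but the underlying idea is the right one).
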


For certain $W$, a mean field equilibrium can  be identified with a graphon equilibrium. This is clear when $W \equiv 1$, as noted above, but in fact holds more generally:

\begin{proposition} \label{prop:constdegree}
Let $W \in L^1_+[0,1]^2 $. Assume that 
\begin{align}
\int_0^1 W(u,v)\,dv = 1, \quad a.e.\ u \in [0,1]. \label{def:const-degree}
\end{align}
Suppose $\nu_\cdot \in C([0,T];\P(\R^d))$ is a mean field equilibrium, and let $\alpha^* \in \A_1$ be an equilibrium control for $\nu_\cdot$. Define $\mu_t = \mathrm{Unif}[0,1] \times \nu_t$. Then $\mu_\cdot=(\mu_t)_{t \in [0,T]}$ is a $W$-equilibrium, and  $(t,u,x) \mapsto \alpha^*(t,x)$ is an equilibrium control for $\mu_\cdot$.
\end{proposition}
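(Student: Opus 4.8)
The plan is to verify the two requirements in the definition of a $W$-equilibrium directly: first, that $(t,u,x)\mapsto\alpha^*(t,x)$ maximizes $J_W(\mu_\cdot,\cdot)$ over $\Aunif$; and second, that $\mu_t=\L(U,X^{\alpha^*}_t)$ for all $t$, where here $X^{\alpha^*}$ is the graphon-game state process driven by the control $(t,u,x)\mapsto\alpha^*(t,x)$. The key observation that makes everything work is that, under the degree-regularity condition \eqref{def:const-degree}, for $\mu_t=\mathrm{Unif}[0,1]\times\nu_t$ we have $\textsf{W}\mu_t(u)=\int_{[0,1]\times\R^d}W(u,v)\delta_x\,\mu_t(dv,dx)=\bigl(\int_0^1 W(u,v)\,dv\bigr)\nu_t=\nu_t$ for a.e.\ $u$. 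Thus the neighborhood measure seen by a.e.\ player in the graphon game coincides exactly with the mean field measure flow $\nu_t$.

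First I would establish the fixed-point (consistency) condition. Since the control $(t,u,x)\mapsto\alpha^*(t,x)$ does not depend on $u$, and the coefficients $b,\sigma$ do not depend on $u$ either, the graphon-game SDE \eqref{def:SDEgraphongame} for $X^{\alpha^*}$ becomes $dX^{\alpha^*}_t=b(t,X^{\alpha^*}_t,\alpha^*(t,X^{\alpha^*}_t))\,dt+\sigma(t,X^{\alpha^*}_t)\,dB_t$ with $X^{\alpha^*}_0=X_0$. This is precisely the mean field SDE whose solution, started from $X_0$ with $\L(X_0)=\nu_0$, has $\L(X^{\alpha^*}_t)=\nu_t$ for all $t$ by the mean field equilibrium property. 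It remains to upgrade $\L(X^{\alpha^*}_t)=\nu_t$ to the joint statement $\L(U,X^{\alpha^*}_t)=\mathrm{Unif}[0,1]\times\nu_t=\mu_t$; for this I would argue that $U$ and $X^{\alpha^*}$ are independent. This follows because $U$ is $\F_0$-measurable, the driving Brownian motion $B$ is independent of $\F_0$ (hence of $U$), and the SDE coefficients do not depend on $U$, so $X^{\alpha^*}$ is a measurable functional of $(X_0,B)$ alone — and one can further arrange, or assume without loss, that $X_0$ is independent of $U$ if $\lambda=\mathrm{Unif}[0,1]\times\nu_0$; more carefully, even without independence of $X_0$ and $U$, what we need is only $\L(U,X^{\alpha^*}_t)=\mathrm{Unif}[0,1]\times\nu_t$, which does require $\lambda$ to be a product measure, and indeed the hypothesis $\mu_0=\mathrm{Unif}[0,1]\times\nu_0$ forces $\lambda=\mu_0$ to be a product; then $X^{\alpha^*}_t=\Phi_t(X_0,B)$ with $\Phi_t$ deterministic, and $(X_0,B)\perp U$, giving the claim. (A subtle point to handle: the definition of the graphon game fixes $\L(U,X_0)=\lambda$; so implicitly we are taking $\lambda=\mathrm{Unif}[0,1]\times\nu_0$, which should be stated, or $\nu_0$ should be defined as the second marginal of $\lambda$ and $\lambda$ assumed product.)

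Next I would establish optimality. Fix any competitor $\alpha\in\Aunif$ and let $X^\alpha$ solve \eqref{def:SDEgraphongame}. Using $\textsf{W}\mu_t(U)=\nu_t$ a.s.\ (valid since $U\sim\mathrm{Unif}[0,1]$ and the identity $\textsf{W}\mu_t(u)=\nu_t$ holds for a.e.\ $u$), we get
\begin{align*}
J_W(\mu_\cdot,\alpha)=\E\left[\int_0^T f(t,X^\alpha_t,\nu_t,\alpha(t,U,X^\alpha_t))\,dt+g(X^\alpha_T,\nu_T)\right].
\end{align*}
Now condition on $U$: for each fixed value $u\in[0,1]$, the process $X^\alpha$ under the control $\alpha$ with $U$ frozen at $u$ is, in law, a controlled mean field state process with Markovian control $\tilde\alpha^u(t,x):=\alpha(t,u,x)\in\A_1$ and initial law $\lambda_u$ (the disintegration of $\lambda$; here $=\nu_0$). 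By the mean field equilibrium property of $(\nu_\cdot,\alpha^*)$ — which asserts optimality over \emph{all} of $\A_1$ against the frozen flow $\nu_\cdot$ — the conditional objective given $U=u$ is at most the value $J_1(\nu_\cdot,\alpha^*)$, with equality achieved by $\alpha^*$. Taking expectations over $U$ gives $J_W(\mu_\cdot,\alpha)\le J_1(\nu_\cdot,\alpha^*)=J_W(\mu_\cdot,\alpha^*)$, where the last equality uses Step one (or the same computation with $\alpha=\alpha^*$). Hence $(t,u,x)\mapsto\alpha^*(t,x)$ is an equilibrium control for $\mu_\cdot$, completing the proof.

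I expect the main obstacle to be the measure-theoretic bookkeeping in the conditioning-on-$U$ argument: one must make precise the claim that, conditionally on $U=u$, the controlled process $X^\alpha$ has the law of a mean field controlled process with control $\tilde\alpha^u$ and initial condition $\lambda_u$, and that the resulting conditional cost is a genuine version of $J_1$ against which the mean field equilibrium's optimality applies — in particular checking that $\lambda_u=\nu_0$ for a.e.\ $u$ (forced by $\lambda=\mathrm{Unif}[0,1]\times\nu_0$) and that measurability in $u$ causes no trouble when integrating. A secondary, purely expository, obstacle is pinning down the implicit hypothesis that $\lambda$ is the product $\mathrm{Unif}[0,1]\times\nu_0$ (equivalently, defining $\nu_0$ to be the $\R^d$-marginal of $\lambda$ and requiring $\lambda$ to be a product), without which the fixed-point identity $\mu_0=\mathrm{Unif}[0,1]\times\nu_0$ cannot hold at $t=0$.
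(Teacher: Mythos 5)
Your proof is correct, and the optimality step takes a genuinely different (and more elementary) route than the paper's. After establishing the key identity $\textsf{W}\mu_t(u)=\nu_t$ — which both you and the paper use — the paper observes $J_1(\nu_\cdot,\alpha)=J_W(\mu_\cdot,\alpha)$ for $\alpha\in\A_1$ and then reduces $\sup_{\alpha\in\Aunif}J_W(\mu_\cdot,\alpha)$ to $\sup_{\alpha\in\A_1}J_W(\mu_\cdot,\alpha)$ by invoking the Markovian-projection machinery (the same tools as the existence proof, or \cite[Theorem 3.7]{lacker2015mean}), which is a nontrivial relaxed-control/mimicking argument showing that extra independent randomization in the control cannot improve the value. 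You instead disintegrate $J_W(\mu_\cdot,\alpha)$ over $U$: for each $u$, the conditional problem is exactly a mean-field control problem with Markovian control $\alpha(\cdot,u,\cdot)\in\A_1$ and initial law $\lambda_u=\nu_0$, so mean-field optimality of $\alpha^*$ applies pointwise in $u$, and integrating gives $J_W(\mu_\cdot,\alpha)\le J_1(\nu_\cdot,\alpha^*)=J_W(\mu_\cdot,\alpha^*)$. This only uses the easy implication (each conditional problem is dominated), not the harder converse established in the paper's Lemma \ref{le:optimality_alpha_u}, and so avoids the relaxed formulation entirely. You also explicitly verify the fixed-point condition $\mu_t=\L(U,X^{\alpha^*}_t)$ via independence of $U$ and $(X_0,B)$, and you correctly flag the implicit hypothesis $\lambda=\mathrm{Unif}[0,1]\times\nu_0$ forced by $\mu_0=\lambda$; both points are left tacit in the paper's brief proof.
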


The condition \eqref{def:const-degree} can be interpreted as saying that the graphon $W$ has \emph{constant out-degree}, or simply \emph{constant degree} if $W$ is assumed symmetric. A similar principle appeared in the uncontrolled setting in \cite[Corollary 2.4]{coppini2021note}.

\begin{example}
Let us revisit Example \ref{ex:Laplacian}, where $G_n$ is a simple connected graph on vertex set $[n]$, and $\xi^n_{ij}=(n/d_i)1_{\{i \sim j\}}$.
The neighborhood empirical measures become
\begin{align*}
M^{n,i}_t = \frac{1}{n}\sum_{j=1}^n \xi^n_{ij} \delta_{X^{n,j}_t} = \frac{1}{d_i}\sum_{j \sim i} \delta_{X^{n,j}_t}.
\end{align*}
This models a scenario in which players interact symmetrically with their neighbors in the underlying graph $G_n$, as in \cite{delarue2017mean,lacker2021case}.
It is not clear if there is a simple (e.g., degree-based) characterization of the situations where $W_{\xi^n}$ converges in the strong operator topology (or in cut norm). However, if a limit $W_{\xi^n} \to W$ does exist, then $W$ must satisfy the constant-degree condition of Proposition \ref{prop:constdegree}. Indeed, for each $i \in [n]$ and each $u \in I^n_i$ we have
\begin{align*}
\int_0^1 W_{\xi^n}(u,v)dv &= \sum_{j=1}^n \int_{I^n_j} W_{\xi^n}(u,v)dv = \sum_{j=1}^n \frac{1}{n} \xi^n_{ij} = \sum_{j=1}^n \frac{1}{d_i} 1_{\{i \sim j\}} = 1,
\end{align*}
and the left-hand side, as a function of $u$, converges in $L^1[0,1]$ to $\int_0^1 W(u,v)dv$.
\end{example}

We can further show uniqueness of the equilibrium, under an additional assumption adapted from the classical Lasry-Lions monotonicity condition:

\begin{proposition}\label{prop:unique_W_eq}
In addition to the standing assumptions of Section \ref{se:mainresults}, assume the following:
\begin{enumerate}
\item Separable $f$: There exist two functions $f_1, f_2$ such that  
    \begin{equation*}
        f(t,x,m,a) = f_1(t,x,a) + f_2(t,x,m),
    \end{equation*}
    \item Unique optimal controls: For each $\mu \in C([0,T]; \Punif([0,1]\times \R^d)$, the supremum in $\sup_{\alpha \in \A_U}J_W(\mu,\alpha)$ is attained uniquely (up to Lebesgue a.e.\ equality).
    \item Monotonicity: for each ${m}_1, {m}_2 \in \Punif([0,1] \times \R^d)$ and $t \in [0,T]$, we have 
    \begin{align}
        \int_{[0,1] \times \R^d} \big(g(x, \text{\rm\textsf{W}}{m}_1(u)) - g(x, \text{\rm\textsf{W}}{m}_2(u))\big)&({m}_1 - {m}_2)(du, dx) \le 0 \label{def:monotonicity} \\
        \int_{[0,1] \times \R^d} \big(f_2(t,x, \text{\rm\textsf{W}}{m}_1(u)) - f_2(t,x, \text{\rm\textsf{W}}{m}_2(u))\big)&({m}_1 - {m}_2)(du,dx) \le 0 . \nonumber
    \end{align}
\end{enumerate}
Then there exists a unique $W$-equilibrium.
\end{proposition}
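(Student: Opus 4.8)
The plan is to follow the classical Lasry--Lions uniqueness argument, adapted to the label-state formulation. Suppose $\mu^1_\cdot$ and $\mu^2_\cdot$ are two $W$-equilibria with equilibrium controls $\alpha^1, \alpha^2 \in \Aunif$, and let $X^1, X^2$ denote the corresponding state processes (driven by the same $B$, $U$, $X_0$), so that $\mu^k_t = \L(U, X^k_t)$. The key quantity to examine is the sum of the ``cross'' suboptimality gaps: since $\alpha^k$ is optimal for $\mu^k_\cdot$ but $\alpha^{3-k}$ is merely admissible, we have $J_W(\mu^1_\cdot, \alpha^1) \ge J_W(\mu^1_\cdot, \alpha^2)$ and $J_W(\mu^2_\cdot, \alpha^2) \ge J_W(\mu^2_\cdot, \alpha^1)$. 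Adding these and rearranging, and using the separability $f = f_1 + f_2$ so that the $f_1$ and running-action terms cancel pairwise (because $f_1(t,x,a)$ does not depend on the measure argument, the $f_1$-part of $J_W(\mu^1_\cdot,\alpha^2)$ equals that of $J_W(\mu^2_\cdot,\alpha^2)$, etc.), one is left with an inequality of the form
\begin{align*}
0 &\le \E\Big[\int_0^T \big(f_2(t, X^1_t, \textsf{W}\mu^1_t(U)) - f_2(t, X^1_t, \textsf{W}\mu^2_t(U))\big)\,dt + \big(g(X^1_T, \textsf{W}\mu^1_T(U)) - g(X^1_T, \textsf{W}\mu^2_T(U))\big)\Big] \\
&\quad + \E\Big[\int_0^T \big(f_2(t, X^2_t, \textsf{W}\mu^2_t(U)) - f_2(t, X^2_t, \textsf{W}\mu^1_t(U))\big)\,dt + \big(g(X^2_T, \textsf{W}\mu^2_T(U)) - g(X^2_T, \textsf{W}\mu^1_T(U))\big)\Big].
\end{align*}
Rewriting each expectation as an integral against $\mu^1_t$ or $\mu^2_t$ over $[0,1]\times\R^d$ — using $\L(U, X^k_t) = \mu^k_t$ — the right-hand side becomes exactly (the negative of) the left-hand sides of the two monotonicity inequalities in \eqref{def:monotonicity}, integrated in $t$ for the $f_2$ terms and evaluated at $T$ for the $g$ term, with ${m}_1 = \mu^1_t$, ${m}_2 = \mu^2_t$. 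By the monotonicity assumption each of these is $\le 0$, so every inequality in the chain is an equality; in particular $J_W(\mu^1_\cdot, \alpha^2) = J_W(\mu^1_\cdot, \alpha^1) = \sup_\alpha J_W(\mu^1_\cdot, \alpha)$, so $\alpha^2$ is an optimal control for $\mu^1_\cdot$. By the uniqueness-of-optimal-controls hypothesis (2), $\alpha^2 = \alpha^1$ Lebesgue-a.e.

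Once the controls agree a.e., I would conclude that the state processes coincide: $X^1$ and $X^2$ solve the same SDE \eqref{def:SDEgraphongame} with the same data $(U, X_0, B)$ and (a.e.-equal) drift coefficient $\alpha^1 = \alpha^2$, so by strong uniqueness $X^1_t = X^2_t$ a.s. for all $t$. Here one should be slightly careful: the controls are equal only Lebesgue-a.e.\ on $[0,T]\times[0,1]\times\R^d$, so I would note that a.e.-equal Markovian controls produce the same law of the state process — e.g.\ by observing that the set where they differ has zero occupation measure under the (nondegenerate, hence absolutely-continuous-in-space) law of $X^1_t$, which follows from the uniform nondegeneracy of $\sigma\sigma^\top$ and standard estimates giving a density for $X^1_t$. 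Hence $\mu^1_t = \L(U, X^1_t) = \L(U, X^2_t) = \mu^2_t$ for all $t$, proving uniqueness.

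The main obstacle — really the only delicate point — is the bookkeeping in the cancellation step and making sure the separability hypothesis is used correctly: one must verify that the non-measure-dependent parts of the objective ($f_1$ and the contribution of $b, \sigma$ through the dynamics) genuinely cancel when the four terms $J_W(\mu^1,\alpha^1), J_W(\mu^1,\alpha^2), J_W(\mu^2,\alpha^1), J_W(\mu^2,\alpha^2)$ are combined with the right signs, which works precisely because $f_1(t,x,a)$ is independent of $m$ and the dynamics of $X^{\alpha^k}$ depend only on $\alpha^k$, not on the measure flow. A secondary technical point is the passage between expectations over $\Omega$ and integrals over $[0,1]\times\R^d$ against $\mu^k_t$, and ensuring the map $t \mapsto \int f_2(t, x, \textsf{W}\mu^k_t(u))\,(\mu^j_t - \mu^k_t)(du,dx)$ is integrable so Fubini applies — both follow from boundedness and continuity of $f_2$, $g$ and continuity of $t \mapsto \mu^k_t$. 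I would also remark that hypothesis (2) is automatically satisfied in natural cases (e.g.\ strictly concave $f_1$ in $a$ with affine $b$), but since it is assumed outright there is nothing to prove there.
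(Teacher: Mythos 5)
Your argument is correct and is essentially the paper's own: the paper recasts the graphon game as the mean field game of Section \ref{se:PDE}, observes that condition (3) is exactly the Lasry--Lions monotonicity for that game, and cites \cite{lacker2018mean} for the cross-optimality-sum argument that you spell out directly in the label-state formulation. One small slip worth flagging: after summing the two optimality inequalities and cancelling the $f_1$ and dynamics contributions, the remaining right-hand side equals the left-hand sides of \eqref{def:monotonicity} themselves (not their negatives), and since those are $\le 0$ while the sum is $\ge 0$, all inequalities collapse to equalities as you conclude.
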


The proof is given in Section \ref{subse:proof_unique}, along with a couple of noteworthy examples of functions $g$ satisfying \eqref{def:monotonicity} (see Remark \ref{re:monotonicity-examples}). 
The proof follows by reducing the graphon game to  a classical mean field game, explained in more detail in Section \ref{se:PDE} below.

\subsection{Approximate equilibria}\label{subse:main_res_approx_eq}

Throughout this section, we are given $W \in L^1_+[0,1]^2$, and we let $\mu_\cdot \in C([0,T];\Punif([0,1] \times \R^d))$ denote a $W$-equilibrium and $\alpha^*$ an equilibrium control for $\mu_\cdot$. Also, as in Section \ref{se:finitegame}, we are given an arbitrary $n \times n$ matrix $\xi^n$ with positive entries and zeros on the diagonal, $\xi^n_{ii}=0$. We define the step kernel $W_{\xi^n}$ as in \eqref{def:stepgraphon}.

In this section we explain how the graphon game defined in Section \ref{se:graphongame} gives rise to approximate equilibria for the finite game defined in Section \ref{se:finitegame}, when the underlying kernels $W_{\xi^n}$ from \eqref{def:stepgraphon} converge in a suitable sense to the kernel $W$.
To provide context for the following results, let us briefly recall the analogous construction in mean field game theory. If $\widehat\alpha \in \A_1$ denotes a mean field equilibrium control, then players $i \in [n]$ in the $n$-player game are assigned the controls $\alpha^n_i(t,x_1,\ldots,x_n) = \widehat\alpha(t,x_i)$. The vector $(\alpha^n_1,\ldots,\alpha^n_n)$ is then shown to constitute an $\epsilon^n$-equilibrium, where $\epsilon^n\to 0$. This strategy dates back to the earliest work on mean field games \cite{huang2006large}, and see \cite[Section 6.1]{carmona-delarue-book} or \cite[Section 2.4]{lacker2020convergence} for the closed-loop case.

This strategy requires several adaptations in the present context. First, because players are not exchangeable, we may have a different error $\epsilon^n_i$ for each player. Moreover, different modes of convergence to zero can make sense in different contexts, such as $\frac{1}{n}\sum_{i=1}^n\epsilon^n_i \to 0$ or $\max_{i \in [n]}\epsilon^n_i \to 0$. This is also highlighted in our case study \cite{lacker2021case}.

A second and more delicate point in our setting is in how to deal with labels.
A $W$-equilibrium control $\alpha^* \in \Aunif$ depends on an additional $\mathrm{Unif}[0,1]$ variable, which we have interpreted as the label (or vertex) of the player. In order to apply this control $\alpha^*$ in the $n$-player game, we must specify which labels to assign to each player. In the definition of the step kernel $W_{\xi^n}$, the player $i$ in the $n$-player game is associated with the interval $I^n_i$ defined in \eqref{def:stepgraphon}, and it thus makes sense to choose for player $i$ some label $u^n_i \in I^n_i$. We then assign to player $i$ the control
\begin{align}\label{eq:approx_equilibrium_startegy}
\alpha_i^{n,u^n_i}(t,x_1,\ldots,x_n) := \alpha^*(t,u^n_i,x_i).
\end{align}
The error $\epsilon^n_i(\bm{u}^n)$ then depends additionally on the choice of labels $\bm{u}^n=(u^n_1,\ldots,u^n_n)$, and the question again arises as to the sense in which we can expect these errors to vanish as $n\to\infty$. In general, we only expect these errors to vanish in probability, with respect to a random choice of $\bm{u}^n$, but we will see that stronger continuity assumptions allow us to strengthen the convergence to be (essentially) uniform in the choice of $\bm{u}^n$.

Let us define precisely the function $\epsilon^n_i : [0,1]^n \to [0,\infty)$. Fix $\bm{u}^n=(u^n_1,\ldots,u^n_n) \in [0,1]^n$ in this paragraph. Using the construction \eqref{eq:approx_equilibrium_startegy}, define $\bm{\alpha}^{n,\bm{u}^n}=(\alpha_1^{n,u^n_1},\ldots,\alpha_n^{n,u^n_n}) \in \A_n^n$. Recall that $\lambda(du,dx)=du\lambda_u(dx)$ denotes the given joint law of $(U,X_0)$ in the graphon game. Consider the $n$-player game as described in Section \ref{se:finitegame}, with initial conditions $(X^{n,i}_0)_{i=1}^n$ chosen independently with $X^{n,i}_0 \sim \lambda_{u^n_i}$. With this choice of initialization, we finally define the nonnegative number
\begin{align*}
\epsilon^n_i(\bm{u}^n) := \sup_{\beta \in \A_n} J_i(\alpha_1^{n,u^n_1},\ldots,\alpha_{i-1}^{n,u^n_{i-1}},\beta,\alpha_{i+1}^{n,u^n_{i+1}},\ldots,\alpha_n^{n,u^n_n}) - J_i(\bm{\alpha}^{n,\bm{u}^n}).
\end{align*}
By definition, $\bm{\alpha}^{n,\bm{u}^n}$ is a $\bm\epsilon^n(\bm{u}^n)$-equilibrium, where $\bm\epsilon^n(\bm{u}^n)=(\epsilon^n_1(\bm{u}^n),\ldots,\epsilon^n_n(\bm{u}^n))$.
This definition makes sense only if we prespecify a version of the disintegration $u \mapsto \lambda_u$, and otherwise we should understand $\bm\epsilon^n(\bm{u}^n)$ to be uniquely defined only up to $\bm{u}^n$-a.e.\ equality.

We first show in full generality that $\bm\epsilon^n\to 0$ in an averaged sense. Recall from Section \ref{se:graphon-operators} the definition of the strong operator topology, for operators from $L^\infty[0,1]$ to $L^1[0,1]$, and recall that convergence in this topology is implied by convergence in cut norm.

\begin{theorem}[General kernel] \label{th:approxEQ-general}
Assume the disintegration $u \mapsto \lambda_u$ admits a version such that $\{\lambda_u : u \in [0,1]\}$ is tight.
Assume $W_{\xi^n}$ converges in the strong operator topology to $W$, and also
\begin{align}
\lim_{n\to\infty}\frac{1}{n^3}\sum_{i,j=1}^n (\xi^n_{ij})^2 = 0. \label{asmp:generalprinciple-A}
\end{align}
Then, if for each $n \in \N$, $(U^n_1,\ldots,U^n_n)$ are independent with $U^n_i \sim \mathrm{Unif}(I^n_i)$,
\begin{align*}
\lim_{n\to\infty}\frac{1}{n} \sum_{i=1}^n \E \left[\epsilon^n_i(U^n_1, \ldots, U^n_n)\right] = 0.
\end{align*}
\end{theorem}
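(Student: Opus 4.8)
The plan is to show that for each player $i$, the value $\epsilon^n_i(\bm{U}^n)$ is controlled — after averaging over $i$ — by two quantities that both vanish: (a) a ``propagation of chaos'' type error measuring how close the $n$-player state processes under the assigned controls $\bm\alpha^{n,\bm u^n}$ are to independent copies of the graphon-game state process with the corresponding labels, and (b) a ``consistency'' error measuring how close the realized neighborhood empirical measures $M^{n,i}_t$ are to the target $\textsf{W}\mu_t(u^n_i)$. The key structural fact is that the $n$-player dynamics decouple: since $b,\sigma$ carry no interaction term, once player $i$ uses the closed-loop control $\alpha^*(t,u^n_i,x_i)$, the process $X^{n,i}$ solves an autonomous SDE driven only by $B^i$ and its own initial condition, so $X^{n,i}$ is \emph{literally} a copy of the graphon state process $X^{\alpha^*}$ started from $\lambda_{u^n_i}$ with label $u^n_i$, and the $X^{n,i}$ are mutually independent. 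This is the main simplification afforded by the standing assumption excluding interactions from $(b,\sigma)$, and it means step (a) is essentially free for the equilibrium profile itself; the work is in controlling the deviation after one player $i$ switches to an arbitrary $\beta\in\A_n$.

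First I would fix $i$ and a deviation $\beta$. Under $(\alpha_1^{n,u^n_1},\dots,\beta,\dots,\alpha_n^{n,u^n_n})$, the players $j\neq i$ still solve autonomous decoupled SDEs, hence $(X^{n,j})_{j\neq i}$ are independent with $X^{n,j}\stackrel{d}{=}X^{\alpha^*}$ from $\lambda_{u^n_j}$ with label $u^n_j$, \emph{unaffected} by $\beta$. Only $X^{n,i}$ changes. Thus the deviating player $i$ faces the objective
\[
\E\left[\int_0^T f(t,X^{n,i}_t,M^{n,i}_t,\beta(t,\bm X^n_t))\,dt + g(X^{n,i}_T,M^{n,i}_T)\right],
\qquad M^{n,i}_t=\tfrac1n\sum_{j\neq i}\xi^n_{ij}\delta_{X^{n,j}_t},
\]
and $M^{n,i}_t$ does not depend on $\beta$ or on $X^{n,i}$. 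I would then compare this to the graphon control problem faced by a player with label $u^n_i$ responding to $\mu_\cdot$, whose running/terminal costs involve $\textsf{W}\mu_t(u^n_i)$ in place of $M^{n,i}_t$. Because $f,g$ are bounded and continuous, and because $M^{n,i}_t$ is a measurable functional of $(X^{n,j})_{j\neq i}$ not involving the deviating control, a bound of the form
\[
\sup_{\beta\in\A_n}\Big|J_i(\dots,\beta,\dots)-J^{\mathrm{graphon}}_{u^n_i}(\beta)\Big|
\le C\,\E\left[\sup_{t\le T}\|M^{n,i}_t-\textsf{W}\mu_t(u^n_i)\|_{BL}\wedge 1\right] =: C\,\rho^n_i(\bm u^n)
\]
follows by a standard argument: one shows the map $m\mapsto \big(\text{value of the control problem with interaction measure held fixed at }m\text{ along the path}\big)$ is uniformly continuous on tight sets using boundedness and continuity of $f,g$ and moment bounds on $X$ (uniform in the label, which holds since $b,\sigma$ are bounded/Lipschitz and $\{\lambda_u\}$ is tight). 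Since $\alpha^*$ is an optimal response to $\mu_\cdot$ for a.e.\ label, $J^{\mathrm{graphon}}_{u^n_i}(\alpha^*(t,u^n_i,\cdot))\ge \sup_\beta J^{\mathrm{graphon}}_{u^n_i}(\beta)$, and combining the two inequalities gives $\epsilon^n_i(\bm u^n)\le 2C\,\rho^n_i(\bm u^n)$ (up to an a.e.-in-$u$ caveat for the disintegration, exactly as flagged in the statement).

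It then remains to show $\frac1n\sum_{i=1}^n\E[\rho^n_i(\bm U^n)]\to 0$, where now the outer expectation is also over the random labels $U^n_i\sim\mathrm{Unif}(I^n_i)$. I would split $M^{n,i}_t-\textsf{W}\mu_t(U^n_i)$ into a ``fluctuation'' part and a ``bias'' part. For the fluctuation: conditionally on $(U^n_j)_j$ and on the assigned labels, $M^{n,i}_t=\tfrac1n\sum_{j\neq i}\xi^n_{ij}\delta_{X^{n,j}_t}$ is a sum of independent (not identically distributed) terms, so testing against a $1$-Lipschitz $\varphi$ with $|\varphi|\le1$ gives conditional variance $\le \tfrac1{n^2}\sum_{j\neq i}(\xi^n_{ij})^2$; averaging over $i$ and using the Cauchy–Schwarz / chaining passage from pointwise to uniform-in-$\varphi$ control of $\|\cdot\|_{BL}$ (on the tight family, using a time-regularity estimate for $X^{n,j}$ to get the sup over $t$) produces a term bounded by a constant times $\big(\tfrac1{n^3}\sum_{i,j}(\xi^n_{ij})^2\big)^{1/2}$, which vanishes by \eqref{asmp:generalprinciple-A}. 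For the bias: $\E[M^{n,i}_t\mid \text{labels}]=\tfrac1n\sum_{j\neq i}\xi^n_{ij}\,\L(X^{n,j}_t)=\tfrac1n\sum_{j\neq i}\xi^n_{ij}\,(\mu_t)_{u^n_j}$, which, testing against $\varphi$, equals $\int_0^1 W_{\xi^n}(U^n_i,v)\langle (\mu_t)_v,\varphi\rangle\,dv$ up to an $O(1/n)$ discretization error and the removal of the diagonal $j=i$ term (size $\xi^n_{ii}/n=0$ by assumption, or otherwise negligible); the target is $\textsf{W}\mu_t(U^n_i)$ tested against $\varphi$, i.e.\ $\int_0^1 W(U^n_i,v)\langle(\mu_t)_v,\varphi\rangle\,dv$. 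So the bias is governed by $\|\textsf{W}_{\xi^n}\psi - \textsf{W}\psi\|_{L^1[0,1]}$ for the bounded function $\psi(v)=\langle(\mu_t)_v,\varphi\rangle$, which $\to0$ by the strong-operator-topology hypothesis; a compactness/equicontinuity argument in $(t,\varphi)$ (using continuity of $t\mapsto\mu_t$ and that $\|\textsf{W}_{\xi^n}\|_{\infty\to1}$ is bounded) upgrades this to uniformity, and the outer expectation over $U^n_i$ combined with the uniform first marginal turns the pointwise-in-$u$ $L^1$ convergence into convergence of the average.

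\textbf{Main obstacle.} The delicate point is the passage from testing against a \emph{single} bounded/Lipschitz $\varphi$ (where independence and the strong-operator hypothesis act cleanly) to controlling the \emph{supremum over all such $\varphi$ and over $t\in[0,T]$} that defines $\|M^{n,i}_t-\textsf{W}\mu_t(U^n_i)\|_{BL}$ — i.e.\ turning weak-convergence-in-expectation into a genuine $\|\cdot\|_{BL}$ estimate, uniformly along the heterogeneous family indexed by $i$ and by the random labels. This requires a tightness input (supplied by the hypothesis on $\{\lambda_u\}$ together with boundedness of the coefficients, which propagates to a time-uniform tightness and modulus-of-continuity bound for the family $\{X^{n,j}\}$), and a careful interchange of the $\sup_\varphi$, the $\sup_t$, and the expectations; the $\tfrac1{n^3}\sum(\xi^n_{ij})^2$ condition is exactly what is needed to beat the entropy cost of this uniformization in the fluctuation term.
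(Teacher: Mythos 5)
Your high-level decomposition matches the paper's: split $\epsilon^n_i$ into an ``optimality'' term, which is $\le 0$ by optimality of $\alpha^*$ for each frozen label (the paper's Lemma \ref{le:Delta2}, proved via the relaxed formulation of Section \ref{se:dependence-on-U}), and ``measure-replacement'' errors comparing $M^{n,i}_t$ to $\textsf{W}\mu_t(u^n_i)$. Your fluctuation--bias split of the measure error, using \eqref{asmp:generalprinciple-A} for the variance and the strong operator topology for the bias, is also the same mechanism as in the paper's Theorem \ref{th:generalprinciple}.

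However, there is a genuine gap where you acknowledge the ``main obstacle,'' and the paper does not fill that gap --- it avoids it. You choose to bound the measure-replacement error by $\E\left[\sup_{t \le T}\|M^{n,i}_t - \textsf{W}\mu_t(u^n_i)\|_{BL}\wedge 1\right]$ and then need an interchange of $\sup_\varphi$, $\sup_t$, and expectations, which you describe only at the level of ``chaining / entropy / time-regularity'' intuition without a concrete argument. The paper sidesteps this entirely: it uses tightness of $\{\lambda_u\}$ (plus bounded coefficients) to find a single compact $K \subset \C^d$ capturing all deviating state processes up to probability $\epsilon$, and then packages the comparison of costs into one fixed bounded measurable function $h(u,m)$ on $[0,1]\times\M_+(\C^d)$ (see \eqref{eq:def_h}), where the supremum over $(a,z)\in A\times K$ is taken \emph{inside} $h$ and the time variable enters only through the continuous pushforward $m\mapsto m_t$ and an integral $\int_0^T\cdot\,dt$, not a supremum. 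Since $h(u,\cdot)$ is bounded and continuous on $\M_+(\C^d)$, the averaged convergence follows by applying Theorem \ref{th:generalprinciple} (which needs only weak convergence in probability of the empirical path-measure, i.e., testing against a single bounded continuous functional, plus bounded convergence). No uniform-in-$\varphi$, uniform-in-$t$, or modulus-of-continuity estimate is required, and consequently your ``entropy cost'' reading of \eqref{asmp:generalprinciple-A} is off: the condition is used only for a crude conditional variance bound on $\langle M^{n,i},\varphi\rangle$ for one $\varphi$ at a time. Two smaller notes: the paper actually works with path-space neighborhood measures $M^{n,\bm{u}^n,i}\in\M_+(\C^d)$ rather than per-time marginals, which is what lets the integral-over-$t$ device work; and the BL-norm machinery you propose does appear in the paper, but only in Theorems \ref{th:generalprinciple-cont} and \ref{th:approxEQ-continuous}, where the stronger (uniform-in-label) conclusion genuinely forces it.
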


The proof of Theorem \ref{th:approxEQ-general} is given in Section \ref{se:approxEQ}, along with the proofs of the two other theorems of this section. The bulk of the analysis is presented first in Section \ref{sec:conv_emp_meas}, in a more general setting that clarifies the key points.

\begin{remark}
The assumption \ref{asmp:generalprinciple-A} is very mild. It holds trivially if $|\xi^n_{ij}|$ are uniformly bounded. If $\xi^n$ is $1/p_n$ times the adjacency matrix of the Erd\H{o}s-R\'enyi graph $G(n,p_n)$, then \ref{asmp:generalprinciple-A} is easily shown to hold in probability, when $np_n \to \infty$.
\end{remark}

\begin{remark} \label{re:eps-general}
We have assumed $f$ and $g$ to be bounded, which means $\epsilon^n_i$ are uniformly bounded. Hence, the conclusion of Theorem \ref{th:approxEQ-general} is equivalent to saying that $\epsilon^n_{I_n}(U^n_1, \ldots, U^n_n) \to 0$ in probability, where $I_n \sim$ Unif$([n])$.
In other words, for randomly assigned labels from $I^n_1 \times \cdots \times I^n_n$, and for a randomly chosen player from $[n]$, the error is small. Note that this does not rule out the possibility that certain players and label assignments have large errors $\epsilon^n_i$, but the fraction of such players and label assignments is negligible.
\end{remark}

Our next result strengthens the mode of convergence, at the price of requiring stronger continuity assumptions, both on the graphon and on the optimal state process. Recall in the following that $\C^d=C([0,T];\R^d)$, and $(U,X^{\alpha^*})$ is defined as in Section \ref{se:graphongame}.

\begin{theorem}[Continuous kernel] \label{th:approxEQ-continuous}
Assume the following:
\begin{enumerate}
\item The map $[0,1] \ni u \mapsto W(u,v)dv \in \M_+([0,1])$ is continuous.
\item The disintegration $[0,1] \ni u \mapsto \L(X^{\alpha^*}\,|\,\,U=u) \in \P(\C^d)$ admits a continuous version. 
\end{enumerate}
Assume that \eqref{asmp:generalprinciple-A} holds, and that $W_{\xi^n}$ converges in the strong operator topology to $W$.
Then 
\begin{align*}
\lim_{n\to\infty}\underset{\bm{u}^n \in I^n_1 \times \cdots \times I^n_n}{\rm{ess} \sup} \ \frac{1}{n} \sum_{i=1}^n \epsilon^n_i(\bm{u}^n) = 0.
\end{align*}
Moreover, if (1) holds, then (2) holds under the following additional conditions:
\begin{enumerate}
\item[(2a)] The disintegration $[0,1] \ni u \mapsto \lambda_u \in \P(\R^d)$ admits a continuous version.
\item[(2b)] $A$ is a compact convex subset of $\R^k$ for some $k \in \N$.
\item[(2c)] $\sigma(t,x)=\sigma$ is constant.
\item[(2d)] For each $(t,x)$, $a\mapsto b(t,x,a)$ is affine, and $a \mapsto f(t,x,m,a)$ is strictly concave.
\end{enumerate}
\end{theorem}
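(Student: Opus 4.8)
The plan is to prove the two assertions separately, but both rest on the same machinery from Section \ref{sec:conv_emp_meas} governing convergence of the neighborhood empirical measures $M^{n,i}$ when players use the prescribed controls $\bm\alpha^{n,\bm u^n}$. First I would dispose of the ``moreover'' part — that (1) plus (2a)--(2d) imply the continuous-version statement (2). Under (2a)--(2d), the optimal control problem faced by a typical player in the graphon game, for fixed $u$, becomes a strictly convex Markovian stochastic control problem with additive noise; by standard stochastic control theory (e.g.\ via the HJB equation, using that $\sigma$ is constant and nondegenerate, $b$ affine, $f$ strictly concave, so the pointwise maximizer in the Hamiltonian is unique and continuous), the optimal state law $\L(X^{\alpha^*}\,|\,U=u)$ is determined by a coefficient functional that depends on $u$ only through $\lambda_u$ (continuous in $u$ by (2a)) and through $\textsf{W}\mu_t(u)$; and by assumption (1), $u\mapsto \textsf{W}\mu_t(u) = \int_0^1 W(u,v)\mu_{t,v}\,dv$ is continuous (integrate the continuous-in-$u$ kernel against the bounded measure-valued map $v\mapsto\mu_{t,v}$). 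Continuous dependence of the solution of a nondegenerate SDE on continuously-varying data then yields a continuous version of $u\mapsto \L(X^{\alpha^*}\,|\,U=u)$. The main care here is that $\alpha^*$ is only defined up to a.e.\ equality, so I would argue that the \emph{law} $\L(X^{\alpha^*}\,|\,U=u)$ admits a continuous representative, which is all that (2) asks.

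For the main assertion, the strategy mirrors the proof of Theorem \ref{th:approxEQ-general} but upgrades ``in probability / on average'' to ``essentially uniform.'' Fix $\bm u^n$ with $u^n_i\in I^n_i$. The key step is to show that the realized neighborhood empirical measures $M^{n,i}_t$ (under $\bm\alpha^{n,\bm u^n}$) are close, uniformly in $i$ and uniformly in the choice of $\bm u^n$, to the target $\textsf{W}\mu_t(u^n_i)$; once this is in hand, a standard estimate (bounding the gain of a unilateral deviation by a player whose neighborhood measure is nearly at equilibrium, using boundedness and continuity of $f,g$, the definition of the $W$-equilibrium, and that a single player's deviation perturbs each $M^{n,j}$ by $O(\xi^n_{ji}/n)$, controlled via \eqref{asmp:generalprinciple-A}) gives $\frac1n\sum_i \epsilon^n_i(\bm u^n)\to 0$ with a bound not depending on $\bm u^n$. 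The convergence $M^{n,i}_t\approx \textsf{W}\mu_t(u^n_i)$ splits, as in Section \ref{sec:conv_emp_meas}, into: (i) a law-of-large-numbers / variance term — replacing $\delta_{X^{n,j}_t}$ by $\L(X^{n,j}_t)$ — whose mean-square size is $\frac{1}{n^2}\sum_j (\xi^n_{ij})^2 \mathrm{Var}(\cdots)$, summed over $i$ and divided by $n$ this is exactly \eqref{asmp:generalprinciple-A}; plus (ii) a ``deterministic'' term comparing $\frac1n\sum_j \xi^n_{ij}\L(X^{n,j}_t)$ with $\textsf{W}\mu_t(u^n_i)=\int_0^1 W(u^n_i,v)\mu_{t,v}\,dv$. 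Term (ii) is where the two continuity hypotheses enter: writing $\frac1n\sum_j\xi^n_{ij}\L(X^{n,j}_t) = \textsf{W}_{\xi^n}\nu^n_t(u^n_i)$ for the step measure $\nu^n_t = \frac1n\sum_j \delta_{(u^n_j,\,\L(X^{n,j}_t))}$-type object, one compares $\textsf{W}_{\xi^n}$ against $\textsf{W}$ (controlled by strong-operator convergence, tested against the bounded-Lipschitz functionals coming from (1)) and compares $\L(X^{n,j}_t)$ against $\mu_{t,u^n_j}$ (which by a Gronwall/fixed-point argument reduces to the same comparison at earlier times, closing the loop), while the continuity in (2) is what makes $u\mapsto \L(X^{\alpha^*}\,|\,U=u)_t$ — hence the target integrand — genuinely continuous so that its Riemann-type approximation by the step data $u^n_i\in I^n_i$ has error $\to0$ \emph{uniformly} in $\bm u^n$, rather than merely in an $L^1(du)$ sense.

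The main obstacle I anticipate is precisely this uniform-in-$\bm u^n$ control in term (ii): in Theorem \ref{th:approxEQ-general} one may average over $u^n_i\in I^n_i$ and invoke $L^1$-convergence of kernel integrals, but here one needs a genuine sup over admissible label vectors. The resolution is a two-sided use of continuity — continuity of $u\mapsto W(u,\cdot)dv$ gives equicontinuity of the family $u\mapsto \textsf{W}\nu(u)$ uniformly over bounded families of measure-flows $\nu$, and continuity of $u\mapsto\L(X^{\alpha^*}\,|\,U=u)$ gives uniform continuity of the equilibrium target on the compact interval $[0,1]$ — so that the step-function error over the mesh $\{I^n_i\}$ vanishes uniformly. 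A secondary technical point is handling the diagonal: since $\xi^n_{ii}=0$ the self-term is absent, but one must check the deviating player's own contribution to $M^{n,i}$ stays negligible, again via \eqref{asmp:generalprinciple-A} together with the row-sum control implied by strong-operator convergence. Finally, one should note that the ``ess sup'' is needed only because $u\mapsto\lambda_u$ and $\alpha^*$ are defined up to null sets; fixing the continuous versions granted by (2) and (2a) makes $\epsilon^n_i(\bm u^n)$ genuinely well-defined and the ess sup an honest sup, which I would remark on at the end.
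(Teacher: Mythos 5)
Your overall strategy is the right one and matches the paper's: decompose $\epsilon^n_i$ into a variance piece plus a deterministic comparison piece, kill the cross term using the optimality of $\alpha^*$ for a.e.\ frozen $u$, and then use the two continuity hypotheses to upgrade the $L^1(du)$ convergence of Theorem \ref{th:approxEQ-general} to an essential supremum over label vectors. The paper carries this out via the exact same decomposition ($\Delta^{n,i}_1,\Delta^{n,i}_2,\Delta^{n,i}_3$) plus the uniform version of the empirical-measure convergence, Theorem \ref{th:generalprinciple-cont}. You also correctly identify the two-sided role of continuity in controlling the ``Riemann-sum'' error over the mesh $\{I^n_i\}$, which is the genuinely new ingredient relative to the general-kernel case.

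That said, there are two places where your sketch introduces machinery that would actually be \emph{wrong} in this setting, and which reveal a misreading of the model. First, you invoke a ``Gronwall/fixed-point argument'' to compare $\L(X^{n,j}_t)$ with $\mu_{t,u^n_j}$, ``closing the loop.'' There is no loop to close: the standing assumptions deliberately exclude any measure dependence in $b$ and $\sigma$, so the state SDE under the prescribed control $\alpha^*(\cdot,u^n_j,\cdot)$ is fully decoupled. Consequently $\L(X^{n,u^n_j,j}) = \L(X^{\alpha^*}\mid U=u^n_j)$ \emph{exactly}, with no error term to propagate via Gronwall (this is \eqref{pf:approxEQ-condlaw1} in the paper). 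A McKean--Vlasov-type contraction argument would not terminate correctly here and is not needed. Second, you worry that a single deviating player perturbs each $M^{n,j}$ and want to control this via \eqref{asmp:generalprinciple-A}. But $\epsilon^n_i$ involves only player $i$'s objective, which depends only on $M^{n,i}$, and since $\xi^n_{ii}=0$ player $i$'s own state does not appear in $M^{n,i}$ at all. The deviating player's effect on the \emph{other} players' neighborhood measures is irrelevant to the approximate-Nash bound for player $i$. The paper states this explicitly right after \eqref{eq:M^n,u^n,i_t}.

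For the ``moreover'' assertion you wave at the HJB equation and ``continuous dependence of SDE solutions on data,'' but this is not how the paper proceeds and it would require real work to make precise. The paper's Lemma \ref{le:continuity-alpha_u} first shows via Markovian projection and Jensen's inequality (using (2b)--(2d)) that the relaxed optimizer $P^*_{u,m}$ is uniquely determined, with a separate coupling argument (the Bernoulli mixture $X^S$) to handle uniqueness of the Markovian control; it then appeals to Berge's maximum theorem so that the singleton-valued argmax map $(u,m)\mapsto P^*_{u,m}$ has closed graph and hence is continuous, and concludes via Corollary \ref{co:cont-disint}. Your PDE-based argument would need, at minimum, a regularity theory for the HJB equation with merely measurable $u$-dependence in the terminal data $g(\cdot,\textsf{W}\mu_T(u))$ and running cost, which the paper avoids precisely because $W$ need not be regular beyond hypothesis (1). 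The relaxed-control route is doing essential work here and should not be elided.
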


To be clear, the two continuity assumptions in Theorem \ref{th:approxEQ-continuous} mean that $\int_0^1 W(u,v)h(v)\,dv$ and $\E[\varphi(X^{\alpha^*})\,|\,U=u]$ depend continuously on $u$, for all bounded continuous real-valued functions $h$ and $\varphi$ on $[0,1]$ and $\C^d$, respectively.
In particular, (1) is true if the function $W : [0,1]^2 \to \R$ is itself continuous.
These continuity assumptions allow a finer pointwise control over quantities derived from the graphon, ensuring for instance that the quantities  $\int_0^1 W(u^n_i,v)h(v)\,dv$ and $\E\int_0^1 W(U^n_i,v)h(v)\,dv$ are close, uniformly in the choice of $u^n_i \in I^n_i$, with again $U^n_i \sim$ Unif($I^n_i$).
Stronger continuity assumptions on $W$ were used in \cite{bayraktar2020graphon,bayraktar2022propagation,tangpi2022optimal}.

\begin{remark} \label{re:continuous-disintegration1}
The assumption (2) in Theorem \ref{th:approxEQ-continuous} can be difficult to check, which is why we provide the more tractable sufficient conditions (2a--d). But (2) is actually automatic in the context of Proposition \ref{prop:constdegree}, as $\L(X^{\alpha^*}\,|\,\,U=u)=\L(X^{\alpha^*})$ is constant in $u$.
For an alternative sufficient condition, it is not hard to show that if (1,2a) hold, and if the control $\alpha^*(t,u,x)$ depends continuously on $(u,x)$ for each $t$, then (2) holds.
\end{remark}

\begin{remark} \label{re:eps-cont}
Analogously to Remark \ref{re:eps-general}, the conclusion of Theorem \ref{th:approxEQ-continuous} is equivalent to the following. For every $\epsilon,\delta \in (0,1)$, it holds for sufficiently large $n$ that
\begin{align*}
|\{i \in [n] : \epsilon^n_i(\bm{u}^n) > \epsilon \}| \le n \delta, \quad \text{for a.e. } \bm{u}^n \in I^n_1\times\cdots\times I^n_n.
\end{align*}
In other words, for large enough $n$ and for a.e.\ choice of labels, we have an $(\epsilon,\delta)$-equilibrium in the sense of \cite{carmona2004nash} (used also in \cite{cui2021learning}): no more than a fraction of $\delta$ of the players are further than $\epsilon$ from optimality.
\end{remark}

\begin{remark} \label{re:MFcase}
Our approximate equilibrium results can be combined with Proposition \ref{prop:constdegree} to yield interesting results on the ``universality" of the mean field game approximation.  If $W \in L^1_+[0,1]^2 $ satisfies \eqref{def:const-degree}, and if $W_{\xi^n} \to W$ in the strong operator topology, then a mean field equilibrium (as opposed to a graphon equilibrium) can be used in Theorem \ref{th:approxEQ-general} to construct approximate equilibria for the $n$-player games.
This justifies the intuition mentioned in the introduction, that the usual MFG approximation remains valid for sufficiently \emph{dense} and \emph{approximately regular} networks.
Note as in Remark \ref{re:continuous-disintegration1} that condition (2) of Theorem \ref{th:approxEQ-continuous} holds automatically in this case; hence, if also $u \mapsto W(u,v)dv \in \M_+([0,1])$ is continuous (e.g., if $W \equiv 1$), then we can also apply Theorem \ref{th:approxEQ-continuous} as well.
\end{remark}

Our final result on approximate equilibria deals with the case where the interaction matrix $\xi^n$ is the weighted adjacency matrix obtained by sampling from the graphon $W$ in a standard manner, as in Example \ref{ex:sampling}(b).

\begin{theorem}[Sampling kernel] \label{th:approxEQ-sampling}
Let $W \in L^1_+[0,1]^2$ be bounded. 
Assume the disintegration $u \mapsto \lambda_u$ admits a version such that $\{\lambda_u : u \in [0,1]\}$ is tight. Then the following holds, for almost every choice of $(u_i)_{i\in\N} \in [0,1]^\infty$, where $[0,1]^\infty$ is equipped with the infinite product measure $(\rm{Unif}[0,1])^{\infty}$: Set  $\xi^n_{ij}=W(u_i,u_j)1_{i \neq j}$ for $i,j \in [n]$ in the $n$-player game. Then 
\begin{align*}
\lim_{n\to\infty}\max_{i \in [n]}\epsilon^n_i(u_1,\ldots,u_n) \to 0.
\end{align*}
\end{theorem}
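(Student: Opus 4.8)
The plan is to verify that, for a.e.\ realization of $(u_i)_{i\in\N}$, the neighborhood empirical measures converge to their graphon counterparts \emph{uniformly over the players}, and then invoke the general control-stability machinery of Section~\ref{sec:conv_emp_meas} to conclude. Fix once and for all measurable versions of the disintegrations $u\mapsto\lambda_u$ and $u\mapsto\pi_u:=\L(X^{\alpha^*}\mid U=u)\in\P(\C^d)$, writing $(\pi_u)_t$ for its time-$t$ marginal. Two structural observations set up the argument. First, since $b,\sigma$ contain no interaction term, the SDE solved by $X^{n,j}$ under the control $\alpha^*(\cdot,u_j,\cdot)$ with initial law $\lambda_{u_j}$ is exactly the law of the graphon state process conditioned on $\{U=u_j\}$; hence, conditionally on $(u_i)_i$, the processes $(X^{n,j})_{j=1}^n$ under $\bm{\alpha}^{n,\bm{u}^n}$ are independent with $\L(X^{n,j}\mid(u_i)_i)=\pi_{u_j}$, and moreover $\mu_t(dv,dx)=dv\,(\pi_v)_t(dx)$, so that $\textsf{W}\mu_t(u)=\int_0^1 W(u,v)(\pi_v)_t\,dv$ by \eqref{def:Woperator-measure}. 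Second, by disintegrating over $U$ the optimality condition defining a $W$-equilibrium, for Lebesgue-a.e.\ $u$ the control $\alpha^*(\cdot,u,\cdot)$ is optimal for the label-$u$ control problem (with measure-flow input $\textsf{W}\mu_\cdot(u)$ and initial law $\lambda_u$), with optimal trajectory law $\pi_u$; since the $u_i$ are i.i.d.\ uniform, a.s.\ every $u_i$ lies in this full-measure set. As in the other approximate-equilibrium proofs, note that $M^{n,i}$ is unchanged if player $i$ deviates, because player $i$'s state enters neither $M^{n,i}$ nor the dynamics of the other players.

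The heart of the matter is the estimate: for a.e.\ $(u_i)_{i\in\N}$,
\begin{equation*}
\lim_{n\to\infty}\ \max_{i\in[n]}\ \E\!\left[\sup_{t\in[0,T]}\big\|M^{n,i}_t-\textsf{W}\mu_t(u_i)\big\|_{BL}\right]=0,
\end{equation*}
where $M^{n,i}_t=\tfrac1n\sum_{j\neq i}W(u_i,u_j)\delta_{X^{n,j}_t}$ and the expectation is over the driving noises. I would prove this via the split $M^{n,i}_t-\textsf{W}\mu_t(u_i)=F^{n,i}_t+D^{n,i}_t+O(1/n)$ into a \emph{dynamical fluctuation} $F^{n,i}_t=\tfrac1n\sum_{j\neq i}W(u_i,u_j)(\delta_{X^{n,j}_t}-(\pi_{u_j})_t)$ and a \emph{sampling bias} $D^{n,i}_t=\tfrac1n\sum_{j\neq i}W(u_i,u_j)(\pi_{u_j})_t-\int_0^1 W(u_i,v)(\pi_v)_t\,dv$, the latter deterministic given $(u_i)_i$. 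The assumed tightness of $\{\lambda_u\}$ yields uniform tightness of $\{(\pi_u)_t:u\in[0,1],\,t\in[0,T]\}$ and a uniform-in-$(j,n)$ modulus of continuity for the paths of $X^{n,j}$ (as $b$ is bounded and $\sigma$ bounded Lipschitz); together with $\|W\|_\infty<\infty$ this reduces the suprema over $t$ and over the unit ball of \eqref{bounded-Lipschitz-norm} to finite maxima over a time grid and a finite net on a fixed compact set, at negligible cost. For $F^{n,i}$, each $\langle F^{n,i}_t,\varphi\rangle$ is a normalized sum of independent bounded mean-zero terms, so Hoeffding plus a maximal inequality give $\E\sup_{t,\varphi}|\langle F^{n,i}_t,\varphi\rangle|=O(n^{-1/2})$, uniformly in $i$ and $(u_i)_i$. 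For $D^{n,i}$, Hoeffding gives for each fixed player, grid time and net function a deviation probability $\le 2\exp(-cn\varepsilon^2/\|W\|_\infty^2)$; summing over these (a factor $n$ for the players, fixed factors otherwise) and applying Borel--Cantelli gives $\max_{i\in[n]}\sup_t\|D^{n,i}_t\|_{BL}\to0$ for a.e.\ $(u_i)_i$.

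To conclude, the displayed convergence is exactly the input required by the machinery of Section~\ref{sec:conv_emp_meas}: conditioning on the noises of the players $j\neq i$ freezes $M^{n,i}_\cdot$ into a deterministic, control-independent measure flow, so that $\sup_\beta J_i(\alpha_1^{n,u_1},\dots,\beta,\dots,\alpha_n^{n,u_n})$ and $J_i(\bm{\alpha}^{n,\bm{u}^n})$ are, respectively, the value and the $\alpha^*(\cdot,u_i,\cdot)$-performance of the label-$u_i$ control problem driven by $M^{n,i}_\cdot$; boundedness and continuity of $f,g$, the uniform tightness above, and the uniform mass bound $M^{n,i}_t(\R^d)\le\|W\|_\infty$ supply a single modulus $\omega$ (uniform in $u$) with $\epsilon^n_i(u_1,\dots,u_n)\le 2\,\omega\big(\E\sup_t\|M^{n,i}_t-\textsf{W}\mu_t(u_i)\|_{BL}\big)$, using that the label-$u_i$ value at input $\textsf{W}\mu_\cdot(u_i)$ equals the $W$-equilibrium value at $u_i$ and is attained by $\alpha^*(\cdot,u_i,\cdot)$. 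Taking $\max_{i\in[n]}$ and letting $n\to\infty$ finishes the proof.

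\textbf{Main obstacle.} Compared with Theorems~\ref{th:approxEQ-general}--\ref{th:approxEQ-continuous}, the new difficulty is the \emph{almost sure}, \emph{uniform-over-players} law of large numbers for the $M^{n,i}$: a union bound over all $n$ players forces exponential concentration, which is precisely what the boundedness of $W$ provides (hence its appearance only in this theorem). The delicate part is organizing this — separating the deterministic sampling bias (handled by Borel--Cantelli over the labels) from the dynamical fluctuation (handled in expectation over the noise), and taming the continua of times and test functions uniformly via tightness and path-regularity estimates.
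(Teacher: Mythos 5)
Your proposal is correct and follows essentially the same route as the paper: the Hoeffding plus Borel--Cantelli concentration scheme, split into a fluctuation (over the Brownian noises) and a sampling bias (over the random labels), the upgrade from test-function convergence to BL-norm convergence via a tightness/compact-net argument, and the reduction to the $\Delta^{n,i}_1$, $\Delta^{n,i}_2$, $\Delta^{n,i}_3$ decomposition with the optimality inequality $\sup_\beta\Delta^{n,i}_2\le0$ (Lemma \ref{le:Delta2}). The one cosmetic difference worth noting is that you work with the time-indexed marginals $M^{n,i}_t\in\M_+(\R^d)$ and take $\sup_{t\in[0,T]}$ explicitly, which forces you to invoke path-regularity and a time grid to reduce to finitely many times; the paper instead defines $h$ on $\M_+(\C^d)$ and works with path-space empirical measures $N^{n,\bm{u}}_i=\tfrac1n\sum_{j\ne i}W(u_i,u_j)\delta_{X_{u_j}}$ throughout, so the supremum over $t$ is absorbed into the compactness of a fixed $K\subset\C^d$ and no separate time discretization is needed. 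The conclusions are equivalent. A small caution on your $F^{n,i}$ estimate: the claimed $O(n^{-1/2})$ rate for $\E\sup_{t,\varphi}|\langle F^{n,i}_t,\varphi\rangle|$ is more than is needed and slightly optimistic given that the net over the BL unit ball and the tightness truncation both contribute non-quantified error terms; what the argument actually requires (and what the paper's Borel--Cantelli plus tightness argument delivers) is just convergence to zero, uniformly over $i\in[n]$, for a.e.\ realization of the labels.
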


\begin{remark}
Our connection between the initial conditions $(X^{n,i}_0)_{i=1}^n$ and the initial distribution $\lambda$ covers many natural cases. If $(X^{n,i}_0)$ are taken to be i.i.d.\ $\sim \lambda \in \P(\R^d)$, as is common in the MFG literature, then we can simply choose $\lambda(du,dx)=du\lambda(dx)$. 

In general, the initial conditions $X^{n,i}_0 \sim \lambda_{u^n_i}$ may be different for each player, though we do still require them to be independent.
For another example, a player with label $u$ could have a non-random initial position $h(u)$, for some measurable function $h : [0,1] \to \R^d$, in which case the natural choice is $\lambda(du,dx)=du\delta_{h(u)}(dx)$.

It is natural to expect more general results to be possible, in which we assume merely that the initial empirical measure $\frac{1}{n}\sum_{i=1}^n \delta_{(i/n,X^{n,i}_0)}$ converges weakly to $\lambda$.
\end{remark}

\begin{remark}
Another approach to justifying our graphon game formulation would be by studying the \emph{convergence problem}, i.e., the problem of analyzing the $n\to\infty$ behavior of the \emph{true} $n$-player equilibria rather than constructing specific \emph{approximate} equilibria. We do not address this problem in this paper, which was was already a difficult problem in mean field game theory \cite{cardaliaguet2019master,lacker2020convergence}, though we mention the very recent papers \cite{bayraktar2022propagation,tangpi2022optimal} which obtain first results on the convergence problem for graphon games.
\end{remark}

\subsection{Graphon games as mean field games, and their PDE formulation} \label{se:PDE}

This section contains no theorems but illustrates how to recast the graphon equilibrium problem of Section \ref{se:graphongame} as a classical mean field game. The point is simply to view the ``label" variable as a state variable with trivial dynamics. For $\alpha \in \Aunif$, 
the $(d+1)$-dimensional process $\overline{X}^\alpha=(U,X^\alpha)$ is the unique solution of the SDE
\begin{align}\label{eq:full_process_UX_dynamics}
d\overline{X}^\alpha_t = \overline{b}(t,\overline{X}^\alpha_t,\alpha(t,\overline{X}^\alpha_t))\,dt + \overline{\sigma}(t,\overline{X}^\alpha_t)\,dB_t,
\end{align}
where $\overline{b} : [0,T] \times \R^{d+1} \times A \to \R^{d+1}$ and $\overline{\sigma} : [0,T] \times \R^{d+1} \to \R^{(d+1) \times d_0}$ are defined by
\begin{align*}
\overline{b}(t,\overline{x},a) &= \begin{pmatrix}
0 \\ b(t,x,a)
\end{pmatrix}, \quad \overline{\sigma}(t,\overline{x}) = \begin{pmatrix}
0_{d_0}^\top \\ \sigma(t,x,a) 
\end{pmatrix}.
\end{align*}
where we write $\overline{x}=(u,x)$ for a generic element of $\R^{d+1} \cong \R \times \R^d$.
That is, the vector $\overline{b}$ and matrix $\overline{\sigma}$ simply append an additional zero row.
Similarly, define $\overline{f} : [0,T] \times \R^{d+1} \times \P(\R^{d+1}) \times A \to \R$ and $\overline{g} : \R^{d+1} \times \P(\R^{d+1}) \to \R$ by
\begin{align*}
\overline{f}(t,\overline{x},{m},a) &= f(t,x,\textsf{W}{m}(u),a), \quad \overline{g}(\overline{x},{m}) = g(x,\textsf{W}{m}(u)).
\end{align*}
(Define $\overline{f}$ and $\overline{g}$ arbitrarily when $u \notin [0,1]$.)
The graphon equilibrium problem is then nothing but the standard mean field game problem associated with the new coefficients $(\overline{b},\overline{\sigma},\overline{f},\overline{g})$. Indeed, a graphon equilibrium is a measure flow $(\mu_t)_{t \in [0,T]}$ such that there exists $\alpha^*\in\Aunif$ satisfying $\mu_t=\L(\overline{X}^{\alpha^*}_t)$ for all $t \in [0,T]$ as well as
\begin{align*}
\E & \left[\int_0^T\overline{f}(t,\overline{X}^\alpha_t,\mu_t,\alpha(t,\overline{X}^\alpha_t))\,dt + \overline{g}(\overline{X}^\alpha_T,\mu_T)\right] \\
	&= \sup_{\beta \in \Aunif} \E\left[\int_0^T\overline{f}(t,\overline{X}^\beta_t,\mu_t,\beta(t,\overline{X}^\beta_t))\,dt + \overline{g}(\overline{X}^\beta_T,\mu_T)\right].
\end{align*}

It must be stressed that recasting the graphon equilibrium problem as a classical mean field game in this manner does not significantly simplify its analysis (except in the proof of uniqueness, Proposition \ref{prop:unique_W_eq}). There are several reasons that existing theory cannot be applied directly in this framework.
\begin{itemize}
\item The kernel $W : [0,1]^2 \to \R$ is not a continuous function in general. It is in some cases, but in many interesting cases it is not (e.g., the stochastic block model). If $W$ is discontinuous, then $\textsf{W}\mu_t(u)$ is discontinuous in $u$, and thus the objective functions $\overline{f}$ and $\overline{g}$ are discontinuous functions of the state variable $\overline{x}$.
\item In the analysis of approximate equilibria, the natural $n$-player game of Section \ref{se:finitegame} is not equivalent the one obtained by plugging empirical measures into the objective functions $(\overline{f},\overline{g})$. The graphon is different in the $n$-player game, being $W_{\xi^n}$ instead of $W$, and this makes our convergence analysis more difficult.

\item The diffusion matrix $\overline{\sigma}\overline{\sigma}^\top$ of the $(d+1)$-dimensional process $\overline{X}$ is always degenerate, even if that of the original $d$-dimensional state process $X$ is not.
\end{itemize}

Although it does not help with our analysis, recasting the graphon model as a mean field game does reveal what the appropriate PDE formulation should be, in the spirit of Lasry-Lions \cite{lasry-lions}. (Similarly, an FBSDE formulation in the spirit of Carmona-Delarue \cite{carmona-delarue,carmona-delarue-book} is possible as well, but we omit it here.)
Indeed, taking $\sigma$ to be the identity matrix for simplicity, the value function $v(t,u,x)$ and density flow $\mu(t,u,x)$ should (formally) obey the PDE system
\begin{align*}
\begin{cases}
&0 = \partial_t v(t,u,x) + \sup_{a \in A}\big[ b(t,x,a) \cdot \nabla_x v(t,u,x) + f(t,x,\textsf{W}\mu_t(u), a)\big] + \frac12 \Delta_x v(t,u,x)  \\
&\partial_t \mu(t,u,x) = - \mbox{div}_x\big(b(t,x,\widehat\alpha(t,u,x))\mu(t,u,x)\big) + \frac12 \Delta_x \mu(t,u,x)   \\
&\text{where } \ \widehat\alpha(t,u,x) = \arg\!\max_{a \in A}\big[ b(t,x,a) \cdot \nabla_x v(t,u,x) + f(t,x,\textsf{W}\mu_t(u), a)\big], \\
&\text{and } \ v(T,u,x)=g(x,\textsf{W}\mu_T(u)), \quad \mu_0=\lambda.
\end{cases}
\end{align*}

Notably, there are no derivatives with respect to $u$. We will not claim to perform any rigorous analysis of this PDE system. However, it is worth noting that a verification theorem for classical solutions only requires $v$ to be once differentiable in $t$ and twice in $x$, and no differentiability with respect to $u$ is needed. This observation will be used implicitly in our linear-quadratic example in Section \ref{se:LQ}.
Lastly, we mention that the above system of PDEs could be formally interpreted as a continuum of conditional measure flows $((t,x) \mapsto \mu(t,u,x))_{u \in [0,1]}$, which is similar in spirit to the PDE systems discussed in \cite{caines2018graphon}.

\subsection{Organization of the paper}
The remaining sections give the proofs of the main theorems, with the exception of Section \ref{se:LQ} which works out a linear-quadratic example.
Section \ref{se:existence_GMFG_sol} proves existence and uniqueness as stated in Section \ref{se:existence&uniqueness}, and may be read independently of Sections \ref{se:dependence-on-U}--\ref{se:approxEQ} which deal with approximate equilibria. Similarly, the linear-quadratic example of Section \ref{se:LQ} is independent of Sections \ref{se:existence_GMFG_sol}--\ref{se:approxEQ}. Sections \ref{se:dependence-on-U}-\ref{sec:conv_emp_meas} provide preliminary results for the proofs of Section \ref{se:approxEQ}, namely the dependence of the optimal control on the labelling and the convergence of neighborhood empirical measures under various assumptions, respectively. Section \ref{se:approxEQ} is devoted to the proofs of the Theorems of Section \ref{subse:main_res_approx_eq}.

\section{Existence of graphon equilibria}\label{se:existence_GMFG_sol}

This section proves Theorem \ref{th:existence}, by adapting the strategy of \cite{lacker2015mean}.
In particular, we will make use of the notion of \emph{relaxed controls}, developed in Section \ref{se:relaxed}. In this section, we fix a graphon $W \in L^1_+[0,1]^2$. Note that $W$ is not necessarily bounded.

\subsection{Continuity of the $\textsf{W}$ operator} \label{se:W-continuity}

First, we compile some essential continuity properties of the operator $\mathsf{W}$ defined in \eqref{def:Woperator-measure}.  These results will be useful in more general forms, so we work here with a Polish space $E$ which will later be either $E=\R^d$ or the path space $E=\C^d = C([0,T];\R^d)$. Recall that $\Punif([0,1] \times E)$ is the set of probability measures on $[0,1] \times E$ with uniform first marginal, endowed with the topology of weak convergence.

We first recall a well known fact that continuity assumptions for test functions can be relaxed when dealing with weak convergence of joint distributions with a common marginal:

\begin{lemma} \cite[Lemma 2.1]{beiglbock2018denseness} \label{le:Punif-continuity}
Suppose $h : [0,1] \times E \to \R$ is bounded and measurable, with $h(u,\cdot)$ continuous on $E$ for a.e.\ $u \in [0,1]$. Then $\Punif([0,1] \times E) \ni \mu \mapsto \langle \mu, h\rangle$ is continuous.
\end{lemma}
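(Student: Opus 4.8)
The plan is to fix a weakly convergent sequence $\mu_n \to \mu$ in $\Punif([0,1]\times E)$ and show $\langle \mu_n,h\rangle \to \langle\mu,h\rangle$; since the ambient space is metric, sequential continuity suffices. First I would normalize $\|h\|_\infty \le 1$ and, by redefining $h(u,\cdot)$ on the Lebesgue-null set of $u$ where it fails to be continuous — which changes none of the integrals $\langle\mu_n,h\rangle$, $\langle\mu,h\rangle$ because every first marginal equals Lebesgue measure — assume $h(u,\cdot) \in C_b(E)$ for \emph{every} $u$. Fix $\epsilon>0$. Since $\{\mu_n\}\cup\{\mu\}$ is weakly relatively compact in a Polish space, it is tight by Prokhorov's theorem, so there is a compact $L\subseteq E$ with $\sup_n\mu_n([0,1]\times L^c)<\epsilon$ and $\mu([0,1]\times L^c)<\epsilon$.

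The crux is to regularize $h$ on $[0,1]\times L$. Consider the map $\Phi\colon[0,1]\to C(L)$, $\Phi(u)=h(u,\cdot)|_L$, where $C(L)$ carries the supremum norm and is a \emph{separable} Banach space because $L$ is compact. For each finite signed Borel measure $\nu$ on $L$, the function $u\mapsto\int_L h(u,x)\,\nu(dx)$ is Borel measurable by Fubini's theorem, so $\Phi$ is weakly measurable, hence Borel measurable into $C(L)$ by the Pettis measurability theorem (using separability of $C(L)$). Lusin's theorem, valid for Borel maps from $([0,1],\mathrm{Leb})$ into a separable metric space, then provides a compact $K\subseteq[0,1]$ with $\mathrm{Leb}(K)>1-\epsilon$ on which $\Phi$ is continuous — which is exactly the statement that $h$, restricted to the closed set $K\times L\subseteq[0,1]\times E$, is jointly continuous (for $(u_j,x_j)\to(u,x)$ in $K\times L$, bound $|h(u_j,x_j)-h(u,x)|$ by $\|h(u_j,\cdot)-h(u,\cdot)\|_{\infty,L}+|h(u,x_j)-h(u,x)|$). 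By Tietze's extension theorem I would extend $h|_{K\times L}$ to $\tilde h\in C_b([0,1]\times E)$ with $\|\tilde h\|_\infty\le 1$.

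To conclude, since $\tilde h$ is bounded and continuous, $\langle\mu_n,\tilde h\rangle\to\langle\mu,\tilde h\rangle$. Because $h=\tilde h$ on $K\times L$ while $|h-\tilde h|\le 2$ everywhere, and because each $\mu_n$ has uniform first marginal (so $\mu_n(K^c\times E)=\mathrm{Leb}(K^c)$), I get $|\langle\mu_n,h\rangle-\langle\mu_n,\tilde h\rangle| \le 2\mu_n\big((K\times L)^c\big) \le 2\,\mathrm{Leb}(K^c)+2\mu_n([0,1]\times L^c) < 4\epsilon$ uniformly in $n$, and the same bound with $\mu$ in place of $\mu_n$. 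Hence $\limsup_n|\langle\mu_n,h\rangle-\langle\mu,h\rangle|\le 8\epsilon$, and letting $\epsilon\downarrow 0$ finishes the proof.

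I expect the main obstacle to be the regularization step. The naive idea of approximating $h$ in the norm $\int_0^1\|h(u,\cdot)-g(u,\cdot)\|_\infty\,du$ by functions that are simple in $u$ and continuous in $x$ \emph{fails}, since $u\mapsto h(u,\cdot)$ need not be separably valued in $(C_b(E),\|\cdot\|_\infty)$ when $E$ is noncompact. Passing first to the compact $L$, where $C(L)$ is separable and Pettis together with Lusin apply, is what makes the argument work; and the uniform–first–marginal hypothesis enters only — but essentially — through the identity $\mu_n(K^c\times E)=\mathrm{Leb}(K^c)$ in the final estimate.
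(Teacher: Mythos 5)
Your proof is correct. The paper does not prove this lemma itself---it simply cites Lemma~2.1 of the Beiglb\"ock et al.\ reference---so there is no in-text argument to compare against, but your argument is self-contained and sound. The essential moves are all in place: first restricting to a compact $L\subseteq E$ so that $C(L)$ is a \emph{separable} Banach space (as you observe, $C_b(E)$ would fail this for noncompact $E$); using Pettis to upgrade weak measurability of $u\mapsto h(u,\cdot)|_L$ to Borel measurability, so that Lusin produces a compact $K\subseteq[0,1]$ of nearly full Lebesgue measure on which this map is sup-norm continuous and hence $h|_{K\times L}$ jointly continuous; extending by Tietze; and invoking the fixed uniform first marginal exactly once, to get $\mu_n(K^c\times E)=\mathrm{Leb}(K^c)$ uniformly in $n$. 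Two small points worth polishing. First, when you redefine $h$ on the exceptional null set of $u$, enlarge it to a \emph{Borel} null set before zeroing out $h$ there, so the modified $h$ remains jointly Borel measurable (the original exceptional set is a priori only Lebesgue measurable). Second, the passage from weak measurability of $\Phi$ to Borel measurability implicitly uses that separability of $C(L)$ makes the dual unit ball weak$^*$-metrizable and hence weak$^*$-separable, so the norm---and therefore every Borel set---is $\sigma(C(L)^*)$-measurable; this is precisely the content of the separability hypothesis in Pettis' theorem, so your invocation is justified but deserves a sentence.
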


The next lemma is the main result of this section. Part (2) will not be needed but is illustrative and not much longer to prove.

\begin{lemma} \label{le:Wcontinuity}
The following continuity properties hold:
\begin{enumerate}
\item For a.e.\ $u \in [0,1]$, the following map is continuous:
\begin{align*}
\Punif([0,1] \times E) \ni \mu \mapsto \text{\rm\textsf{W}}\mu(u) \in \M_+(E).
\end{align*}
\item Suppose that the map $[0,1] \ni u \mapsto W(u,\cdot) \in (L^1[0,1],\text{weak})$ is continuous. 
 Then, for each $\mu \in \Punif([0,1] \times E)$ and each bounded measurable function $\varphi :E \to \R$, the map $u \mapsto \langle \text{\rm\textsf{W}}\mu(u), \varphi\rangle$ is continuous.
\item Suppose that the map $[0,1] \ni u \mapsto W(u,v)dv \in \M_+([0,1])$ is continuous. 
Suppose $\mu \in \Punif([0,1] \times E)$ is such that there exists a version of the disintegration $u \mapsto \mu_u$ which is continuous. Then the following map is continuous:
\begin{align*}
[0,1] \ni u \mapsto  \text{\rm\textsf{W}}\mu(u) \in \M_+(E).
\end{align*}
\end{enumerate}
\end{lemma}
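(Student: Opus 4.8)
The plan is to handle the three parts separately, reducing every statement about the measure $\textsf{W}\mu(u)\in\M_+(E)$ to the scalar identity
\begin{align*}
\langle \textsf{W}\mu(u),\varphi\rangle \;=\; \int_{[0,1]\times E} W(u,v)\,\varphi(x)\,\mu(dv,dx) \;=\; \int_0^1 W(u,v)\,\langle\mu_v,\varphi\rangle\,dv,
\end{align*}
valid for bounded measurable $\varphi:E\to\R$ and a.e.\ $u\in[0,1]$, the last equality using that $\mu$ has uniform first marginal with disintegration $\mu(dv,dx)=dv\,\mu_v(dx)$. Since $E$ is Polish, both $\Punif([0,1]\times E)$ and $\M_+(E)$ are metrizable, so continuity may be checked along sequences, and weak convergence in $\M_+(E)$ is tested against bounded continuous functions — in particular constants, so the total masses are controlled automatically; note moreover $\langle\textsf{W}\mu(u),1\rangle=\int_0^1 W(u,v)\,dv$ does not depend on $\mu$, which is finite for $u$ in the full-measure set $G:=\{u:W(u,\cdot)\in L^1[0,1]\}$.

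Parts (2) and (3) are then essentially immediate from the displayed identity. For (2), fix $\mu$ and bounded measurable $\varphi$; the function $\psi(v):=\langle\mu_v,\varphi\rangle$ lies in $L^\infty[0,1]$ with $\|\psi\|_\infty\le\|\varphi\|_\infty$, and $u\mapsto\langle\textsf{W}\mu(u),\varphi\rangle=\int_0^1 W(u,v)\psi(v)\,dv$ is continuous precisely because $u\mapsto W(u,\cdot)$ is weakly continuous in $L^1[0,1]$ and $\psi\in(L^1[0,1])^\ast=L^\infty[0,1]$. For (3), fix a bounded continuous $\varphi$ on $E$; using the continuous version of $u\mapsto\mu_u$, the function $\psi(v):=\langle\mu_v,\varphi\rangle$ is now bounded and continuous on $[0,1]$, so $u\mapsto\int_0^1 W(u,v)\psi(v)\,dv=\langle\textsf{W}\mu(u),\varphi\rangle$ is continuous by the assumed weak continuity of $u\mapsto W(u,v)\,dv$ in $\M_+([0,1])$ (which also covers $\psi\equiv1$, hence the masses). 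Letting $\varphi$ range over bounded continuous functions on $E$ yields continuity of $u\mapsto\textsf{W}\mu(u)$ into $\M_+(E)$.

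Part (1) is the only part needing real work, the obstruction being that $W(u,\cdot)$ is merely integrable, not bounded, so for a weakly convergent sequence $\mu^n\to\mu$ in $\Punif([0,1]\times E)$ one cannot pass to the limit directly in $\langle\textsf{W}\mu^n(u),\varphi\rangle$. For $u\in G$ I would truncate: set $W^K:=W\wedge K$ with associated operator $\textsf{W}^K$. For fixed $K$ the function $h(v,x):=W^K(u,v)\varphi(x)$ is bounded and, for every $v$, continuous in $x$, so Lemma~\ref{le:Punif-continuity} gives that $\mu\mapsto\langle\textsf{W}^K\mu(u),\varphi\rangle=\langle\mu,h\rangle$ is weakly continuous on $\Punif([0,1]\times E)$. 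The truncation error is bounded \emph{uniformly in $\mu$}, using only the common first marginal:
\begin{align*}
\big|\langle\textsf{W}\mu(u),\varphi\rangle-\langle\textsf{W}^K\mu(u),\varphi\rangle\big|\;\le\;\|\varphi\|_\infty\int_0^1\big(W(u,v)-W^K(u,v)\big)\,dv\;=:\;\rho_K(u),
\end{align*}
and $\rho_K(u)\downarrow0$ as $K\to\infty$ for each $u\in G$ by dominated convergence. A routine $\varepsilon/3$ argument — choose $K$ with $\rho_K(u)<\varepsilon/3$, then invoke continuity of $\mu\mapsto\langle\textsf{W}^K\mu(u),\varphi\rangle$ for the middle term at large $n$ — then gives $\langle\textsf{W}\mu^n(u),\varphi\rangle\to\langle\textsf{W}\mu(u),\varphi\rangle$ for every bounded continuous $\varphi$ and every $u\in G$, establishing (1). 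I expect the uniform truncation bound $\rho_K(u)$ to be the crux of the argument: it is what allows the $\varepsilon/3$ estimate to run uniformly along the whole sequence $(\mu^n)$, and it works only because every element of $\Punif([0,1]\times E)$ carries the same (Lebesgue) first marginal, so the error always sees $W(u,\cdot)-W^K(u,\cdot)$ integrated against Lebesgue measure, independently of $\mu$; everything else is bookkeeping. (One could avoid Lemma~\ref{le:Punif-continuity} by instead approximating $W^K(u,\cdot)$ in $L^1$ by continuous functions and using ordinary weak convergence, but invoking the cited lemma is cleaner.)
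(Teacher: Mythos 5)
Your argument for parts (2) and (3) is the same as the paper's, down to the observation that $\psi(v)=\langle\mu_v,\varphi\rangle$ is respectively $L^\infty$ or continuous. The interesting difference is in part (1), where both you and the paper isolate the same difficulty — $W(u,\cdot)$ is merely $L^1$, not bounded, so one cannot apply Lemma~\ref{le:Punif-continuity} directly — and both resolve it by exploiting the common Lebesgue first marginal of elements of $\Punif([0,1]\times E)$, but through different mechanisms. The paper pushes $\mu$ forward under $(v,x)\mapsto W(u,v)\varphi(x)$ to a law $F_\mu\in\P(\R)$, shows $\mu\mapsto F_\mu$ is weakly continuous via Lemma~\ref{le:Punif-continuity}, and then upgrades weak convergence of $F_\mu$ to convergence of first moments by a uniform-integrability estimate $\int|x|\,\mathbf 1_{\{|x|\ge r\}}\,F_\mu(dx)\le \|\varphi\|_\infty\int_0^1|W(u,v)|\,\mathbf 1_{\{|W(u,v)|\ge r/\|\varphi\|_\infty\}}\,dv$, which is manifestly uniform in $\mu$. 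You instead truncate the kernel, $W^K=W\wedge K$, apply Lemma~\ref{le:Punif-continuity} to the bounded integrand $W^K(u,v)\varphi(x)$, and control the truncation error by $\|\varphi\|_\infty\int_0^1(W-W^K)(u,v)\,dv$, again uniform in $\mu$. These are two standard disguises of the same uniform-integrability fact; your truncation version is slightly more elementary and arguably more transparent about where the Lebesgue marginal is used, while the paper's version is a little more compact. Both are correct.
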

\begin{proof}
We first prove (1). Since $W \in L^1[0,1]^2$, it holds by Fubini's theorem that $W(u,\cdot) \in L^1[0,1]$ for a.e.\ $u \in [0,1]$. Fix such a $u$ as well as a bounded continuous $\varphi : \R^d \to \R$. Write
\begin{align*}
\langle \textsf{W}\mu(u),\varphi\rangle &= \int_{[0,1] \times \R^d} W(u,v)\varphi(x)\,\mu(dv,dx) = \int_{\R} x\,F_{\mu}(dx),
\end{align*}
where $F_{\mu}$ is the image of $\mu$ under the map $(v,x) \mapsto W(u,v)\varphi(x)$. We first claim that 
\[
\Punif([0,1] \times \R^d) \ni \mu \mapsto F_{\mu} \in \P(\R)
\]
is continuous. To see this, note for bounded continuous $h : \R \to \R$ that
\begin{align*}
\langle F_{\mu},h\rangle = \int_{[0,1] \times \R^d} h(W(u,v)\varphi(x))\,\mu(dv,dx).
\end{align*}
The bounded function $h(W(u,v)\varphi(x))$ depends continuously on $x$ and measurably on $v$, and it follows from Lemma \ref{le:Punif-continuity} that $\mu \mapsto\langle F_{\mu},h\rangle$ is continuous on $\Punif([0,1] \times \R^d)$. To finally deduce that $\int_{\R} x\,F_{\mu}(dx)$ depends continuously on $\mu \in \Punif([0,1] \times \R^d)$, simply note that we have the uniform integrability bound
\begin{align*}
\int_{\R} |x|1_{\{|x| \ge r\}}\,F_{\mu}(dx) &= \int_{[0,1] \times \R^d} |W(u,v)\varphi(x)|1_{\{|W(u,v)\varphi(x)| \ge r\}}\, \mu(dv,dx) \\
	&\le \|\varphi\|_\infty \int_0^1 |W(u,v)|1_{\{|W(u,v)| \ge r/\|\varphi\|_\infty\}}\, dv
\end{align*}
for any $r > 0$, which tends to zero as $r\to\infty$, because $W(u,\cdot) \in L^1[0,1]$.

To prove (2), fix $\mu \in \Punif([0,1] \times E)$. Let $u_n \to u$ in $[0,1]$, and let $\varphi : E \to \R$ be bounded and continuous. Let $\psi(v) = \E[\varphi(X)\,|\,U=v]$, for $(U,X) \sim \mu$. Then $\psi \in L^\infty[0,1]$, and so the weak convergence of $W(u_n,\cdot) \to W(u,\cdot)$ in $L^1[0,1]$ implies
\begin{align*}
\langle \textsf{W}\mu(u_n),\varphi\rangle &= \int_{[0,1] \times E} W(u_n,v) \varphi(x) \, \mu(dv,dx) = \int_0^1 W(u_n,v) \psi(v) \, dv \\
	&\to \int_0^1 W(u,v) \psi(v) \, dv  = \langle \textsf{W}\mu(u),\varphi\rangle .
\end{align*}
The proof of (3) is similar to that of (2), except that we must simply note that $\psi$ is continuous in order to justify the convergence.
\end{proof}

\subsection{The relaxed formulation} \label{se:relaxed}
A relaxed control is a measure on $[0,T] \times A$ with first marginal  equal to the Lebesgue measure. We will denote $\V$ the set of relaxed controls, equipped with the topology of weak convergence, which makes $\V$ a compact (since $A$ is compact) metric space. For each $q \in \V$, we can identify the measurable map $t \mapsto q_t \in \P(A)$ that arises form the disintegration $q(dt, da) = dtq_t(da)$, and which is unique up to (Lebesgue) almost everywhere equality. \emph{Strict controls} are relaxed controls $q \in \V$ of the form $q_t = \delta_{\alpha(t)}$ for a.e.\ $t$, for some measurable $\alpha : [0,T] \to A$.

We will work in this section on the space $\Omega := \V \times [0,1] \times \C^d$. This Polish space is endowed with its Borel $\sigma$-field. In the following, a generic element of $\Omega$ is denoted $(q,u,x)$ and the coordinate maps on $\V$, $[0,1]$, and $\C^d$ are denoted $\Lambda$, $U$, and $X$ respectively.
The canonical filtration $\FF=(\F_t)_{t \in [0,T]}$ is defined by letting $\F_t$ denote the $\sigma$-field generated by $\Lambda|_{[0,t] \times A}, U$, and $(X_s)_{s \in [0,t]}$.

Let $C^{\infty}_c(\R^d)$ denote the set of infinitely differentiable functions $\varphi : \R^d \to \R$ with compact support, and let $\nabla\varphi$ and $\nabla^2\varphi$ denote respectively the gradient and the Hessian of $\varphi$. Define the generator $L$ on $\varphi \in C^{\infty}_c(\R^d)$ by
\begin{equation*}
L\varphi(t,x,a) := b(t,x,a) \cdot \nabla \varphi(x) + \frac{1}{2}\tr{\left(\sigma \sigma^\top(t,x) \nabla^2 \varphi(x)\right)},
\end{equation*}
for $(t,x,a) \in [0,T] \times \R^d \times A$.
For $\varphi \in C^{\infty}_c(\R^d)$ we define a process $N_t^{\varphi} : \Omega \to \R$ by
\begin{equation*}
N_t^{\varphi}(q,u,x) := \varphi(x_t) - \int_{[0,t] \times A} L \varphi(s,x_s,a)\,a(ds,da), \qquad t \in [0,T].
\end{equation*}
The set of admissible laws $\RC$ is defined as the set of $P \in \P(\Omega)$ satisfying
\begin{enumerate} 
\item $P \circ (U,X_0)^{-1} = \lambda$.
\item For each $\varphi \in C^{\infty}_c(\R^d)$, the process $(N_t^{\varphi})_{t \in [0,T]}$ is a $P$-martingale.
\end{enumerate}
This gives a martingale problem formulation, in the spirit of Stroock-Varadhan \cite{stroock1997multidimensional}, for the controlled state processes in \eqref{def:SDEgraphongame}.

For $\mu \in \Punif([0,1] \times \C^d)$ representing the fixed population distribution, we write $\mu_t$ for the marginal obtained as the image by $(u,x) \mapsto (u,x_t)$, and we define a random variable $\Gamma^{\mu} : \Omega \to \R$ by
\begin{equation}
\Gamma^{\mu}(q,u,x) := g(x_T, \textsf{W}\mu_T(u)) + \int_{[0,T] \times A} f(t, x_t, \textsf{W}\mu_t(u), a) \,q(dt,da). \label{def:Gamma}
\end{equation}

\begin{remark} \label{re:relaxation}
Recalling the notation of Section \ref{se:graphongame}, if $\alpha \in \Aunif$, then $dt\delta_{\alpha(t,U,X^\alpha_t)}(da)$ is a random element of $\V$, and the joint law $P^\alpha$ of $(dt\delta_{\alpha(t,U,X^\alpha_t)}(da), U,X^\alpha)$ defines an element of $\RC$. Indeed, the condition $(U,X^\alpha_0) \sim \lambda$ was imposed in Section \ref{se:graphongame}, and the defining martingale property (2) of $\RC$ follows immediately from It\^o's formula. Unpacking the notation, it holds also that 
\begin{align}
J_W(\mu,P^\alpha) = \langle P^\alpha,\Gamma^{\mu}\rangle. \label{eq:relaxation-JW}
\end{align}
\end{remark}

Given $\mu \in \Punif([0,1] \times \C^d)$, a single player's objective is to find
\begin{align*}
\RC^*(\mu) &:= \mbox{arg} \underset{P \in \RC}{\max} \ \langle P, \Gamma^{\mu}\rangle := \{ P \in \RC : \langle P, \Gamma^{\mu}\rangle \ge \langle Q, \Gamma^{\mu}\rangle \ \forall Q \in \RC\}.
\end{align*}
Our first goal will be to prove the existence of what one might naturally call a \emph{relaxed $W$-equilibrium}, defined as a fixed point of the set-valued  map $\Phi : \Punif([0,1] \times \C^d) \to 2^{\Punif([0,1] \times \C^d)}$ given by
\begin{align*}
\Phi(\mu) := \{ P \circ (U,X)^{-1} : P \in \RC^*(\mu)\}.
\end{align*}
That is, a \emph{relaxed $W$-equilibrium} is any $\mu \in \Punif([0,1] \times \C^d)$ satisfying $\mu \in \Phi(\mu)$. We will first prove the existence of such a fixed point in Proposition \ref{pr:relaxedexistence}, and then we will show how to turn it into a true $W$-equilibrium in the sense of Section \ref{se:graphongame}.

\subsection{Existence of relaxed equilibrium}

The goal of this section is to prove the following:

\begin{proposition} \label{pr:relaxedexistence}
There exists $\mu \in \Punif([0,1] \times \C^d)$ such that $\mu \in \Phi(\mu)$.
\end{proposition}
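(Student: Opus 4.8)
The plan is to realize the relaxed $W$-equilibrium as a fixed point of $\Phi$ via the Kakutani--Fan--Glicksberg theorem, on a compact convex domain, following the scheme of \cite{lacker2015mean}.

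\emph{Step 1: a compact convex domain.} Set $\mathcal{Q} := \{P \circ (U,X)^{-1} : P \in \RC\} \subseteq \Punif([0,1] \times \C^d)$. The defining constraints of $\RC$---that $P \circ (U,X_0)^{-1} = \lambda$ and that each $N^\varphi$ is a $P$-martingale---are affine in $P$, so $\RC$ is convex, hence so is its pushforward $\mathcal{Q}$. Moreover $\RC$ is nonempty (it contains the law associated with any constant control, by strong well-posedness of \eqref{def:SDEgraphongame}) and compact: since $b$ is bounded continuous and $\sigma$ continuous, each $N^\varphi_t$ is a bounded continuous functional on $\Omega$, so the martingale property is preserved under weak limits, while the marginal constraint and the $\mathrm{Unif}[0,1]$ law of $U$ are closed conditions; tightness of $\RC$ follows from the boundedness of $b$ and Lipschitz continuity of $\sigma$ via standard moment and Kolmogorov-type estimates for $X$ on $\C^d$, together with compactness of $\V$ and of $[0,1]$. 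Thus $\mathcal{Q}$ is a compact, convex, metrizable subset of the locally convex space of finite signed measures on $[0,1] \times \C^d$ with the weak topology, and $\Phi(\mu) \subseteq \mathcal{Q}$ for every $\mu$.

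\emph{Step 2: the correspondence is well-behaved.} The key analytic point is joint continuity of $(\mu,P) \mapsto \langle P, \Gamma^\mu\rangle$ on $\Punif([0,1]\times\C^d) \times \RC$. By Lemma \ref{le:Wcontinuity}(1) applied with $E = \C^d$, there is a full-measure set of $u \in [0,1]$ along which $\mu \mapsto \textsf{W}\mu(u)$ is continuous; for such $u$, using also that $f,g$ are bounded continuous, that $q$ is a probability measure, and that the marginal flows $t\mapsto\mu_t$ and $t\mapsto x_t$ vary continuously, the map $(\mu,q,x) \mapsto \Gamma^\mu(q,u,x)$ is continuous, and it is bounded uniformly in all variables. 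Given $\mu_n \to \mu$ and $P_n \to P$, form $\widetilde P_n := \delta_{\mu_n} \otimes P_n$ on $\Punif([0,1]\times\C^d) \times \Omega$; these converge weakly to $\delta_\mu \otimes P$ and all have $\mathrm{Unif}[0,1]$ as their $[0,1]$-marginal, so Lemma \ref{le:Punif-continuity}---with the $[0,1]$ coordinate singled out and $(\mu,q,x)$ in the role of $E$---gives $\langle P_n,\Gamma^{\mu_n}\rangle \to \langle P,\Gamma^\mu\rangle$. Consequently: for fixed $\mu$, $P \mapsto \langle P,\Gamma^\mu\rangle$ is continuous on the compact set $\RC$, so $\RC^*(\mu) \neq \emptyset$; it is convex, being the argmax of an affine functional over the convex set $\RC$; hence $\Phi(\mu)$ is nonempty and convex; and the joint continuity together with closedness of $\RC$ shows that $\mu \mapsto \RC^*(\mu)$, and therefore $\Phi$, has closed graph, which with compactness of $\mathcal{Q}$ also makes each $\Phi(\mu)$ closed.

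\emph{Step 3: conclusion.} $\Phi$ is an upper-hemicontinuous correspondence from the nonempty compact convex set $\mathcal{Q}$ into itself with nonempty, convex, closed values, so the Kakutani--Fan--Glicksberg theorem yields $\mu \in \mathcal{Q}$ with $\mu \in \Phi(\mu)$, as desired. The main obstacle is Step 2: $\textsf{W}\mu(u)$ is continuous in $\mu$ only for a.e.\ $u$ (and $\Gamma^\mu$ is not continuous in the state variable $u$ in general), so one cannot apply ordinary weak-convergence arguments; the device of lifting to measures on $\Punif([0,1]\times\C^d)\times\Omega$ with a common uniform first marginal and invoking Lemma \ref{le:Punif-continuity} is precisely what permits working with only a.e.-in-$u$ continuity. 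The remaining ingredients---tightness/compactness and convexity of $\RC$, and closedness of the relaxed martingale problem---are routine.
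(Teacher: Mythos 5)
Your proof is correct and follows essentially the same route as the paper: the Kakutani--Fan--Glicksberg theorem on the same compact convex set $K = \{P \circ (U,X)^{-1} : P \in \RC\}$, with the key analytic input being joint continuity of $(\mu,P) \mapsto \langle P, \Gamma^\mu\rangle$ obtained from Lemma~\ref{le:Wcontinuity}(1) (a.e.-in-$u$ continuity of $\mu \mapsto \textsf{W}\mu(u)$) and Lemma~\ref{le:Punif-continuity}. The one place where you add something explicit is your lifting device $\widetilde P_n := \delta_{\mu_n} \otimes P_n$, which is a clean and self-contained way to extract joint continuity from the a.e.-in-$u$ continuity; the paper's Lemma~\ref{le:continuity-cost} instead delegates this step to a citation of \cite[Corollary A.5]{lacker2015mean} before invoking Lemma~\ref{le:Punif-continuity}. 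Both are fine.
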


To do so, we will use the following lemma on continuity:

\begin{lemma} \label{le:continuity-cost}
The following map is jointly continuous:
\[
\Punif([0,1] \times \C^d) \times  \RC \ni (\mu,P) \mapsto \langle P,\Gamma^{\mu} \rangle \in \R
\]
\end{lemma}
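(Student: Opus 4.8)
The goal is joint continuity of $(\mu, P) \mapsto \langle P, \Gamma^\mu\rangle$ on $\Punif([0,1]\times\C^d) \times \RC$. Recall from \eqref{def:Gamma} that $\Gamma^\mu(q,u,x) = g(x_T, \textsf{W}\mu_T(u)) + \int_{[0,T]\times A} f(t,x_t,\textsf{W}\mu_t(u),a)\,q(dt,da)$. Since a joint limit along sequences $(\mu^n, P^n) \to (\mu, P)$ suffices (both spaces are metrizable), I would take such a sequence and show $\langle P^n, \Gamma^{\mu^n}\rangle \to \langle P, \Gamma^\mu\rangle$. The natural split is to write
\begin{align*}
\langle P^n, \Gamma^{\mu^n}\rangle - \langle P, \Gamma^\mu\rangle = \big(\langle P^n, \Gamma^{\mu^n}\rangle - \langle P^n, \Gamma^\mu\rangle\big) + \big(\langle P^n, \Gamma^\mu\rangle - \langle P, \Gamma^\mu\rangle\big).
\end{align*}
The second term is the ``easy'' half: for a \emph{fixed} $\mu$, one needs $\Gamma^\mu : \Omega \to \R$ to be bounded and continuous (or at least continuous in $(q,x)$ for a.e.\ $u$, so that a Lemma~\ref{le:Punif-continuity}-type argument applies on $\P(\Omega)$ — but here $\Omega = \V\times[0,1]\times\C^d$ has uniform-marginal structure only in the first coordinate via $\V$, and all measures in $\RC$ share the marginal $\lambda$ on $[0,1]\times\C^d_0$, so some care is needed about exactly which continuity relaxation is invoked). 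Boundedness of $\Gamma^\mu$ is immediate from boundedness of $f,g$. For continuity in $(q,x)$: $x\mapsto g(x_T,\textsf{W}\mu_T(u))$ is continuous since $g$ is continuous and evaluation $x\mapsto x_T$ is continuous on $\C^d$; the integral term is continuous jointly in $(q,x)$ because $(t,x,a)\mapsto f(t,x_t,\textsf{W}\mu_t(u),a)$ is bounded and continuous in $(x,a)$ for each $t$, so integrating against the weakly convergent $q^n \to q$ (with $x^n \to x$ uniformly) passes to the limit — this is a standard continuity fact for relaxed controls, the same one used implicitly throughout \cite{lacker2015mean}. By Lemma~\ref{le:Punif-continuity} applied in the variable $u$ (for which $\textsf{W}\mu_t(u)$ need only be measurable), $\mu' \mapsto \langle \mu', h\rangle$ type continuity gives $\langle P^n, \Gamma^\mu\rangle \to \langle P, \Gamma^\mu\rangle$.

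\textbf{The main obstacle} is the first term, i.e.\ uniform-in-$P^n$ control of $\|\Gamma^{\mu^n} - \Gamma^\mu\|_\infty$, or rather showing $\langle P^n, \Gamma^{\mu^n} - \Gamma^\mu\rangle \to 0$. This requires that $\mu \mapsto \Gamma^\mu$ be continuous \emph{uniformly on $\Omega$}, or at least $\sup_{(q,u,x)}|\Gamma^{\mu^n}(q,u,x) - \Gamma^\mu(q,u,x)| \to 0$, which would follow from $\sup_u \|\textsf{W}\mu^n_t(u) - \textsf{W}\mu_t(u)\|_{BL} \to 0$ together with uniform continuity of $f,g$ in $m$. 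But that uniform-in-$u$ convergence of $\textsf{W}\mu^n_t(u)$ is exactly what Lemma~\ref{le:Wcontinuity}(1) does \emph{not} give — it only gives convergence for a.e.\ $u$. So one cannot get a uniform bound; instead I would integrate. Since $\langle P^n, \Gamma^{\mu^n} - \Gamma^\mu\rangle$ only sees the $[0,1]$-marginal of $P^n$ in the $u$-variable, and \emph{every} $P^n \in \RC$ has $U$-marginal equal to $\mathrm{Unif}[0,1]$ (the first marginal of $\lambda$), we get
\begin{align*}
\big|\langle P^n, \Gamma^{\mu^n} - \Gamma^\mu\rangle\big| \le \int_0^1 \Big(\omega_g\big(\|\textsf{W}\mu^n_T(u) - \textsf{W}\mu_T(u)\|_{BL}\big) + \int_0^T \omega_f\big(\|\textsf{W}\mu^n_t(u) - \textsf{W}\mu_t(u)\|_{BL}\big)\,dt\Big)\,du,
\end{align*}
where $\omega_g, \omega_f$ are (bounded) moduli of continuity for $g, f$ in the measure argument. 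Actually since $f,g$ are merely continuous (not uniformly) but bounded, and $\textsf{W}\mu_t(u) \in \M_+(\R^d)$ ranges over a set that need not be tight uniformly in $u$ if $W$ is unbounded, the cleanest route is: the integrand $u \mapsto \big[\,$difference of the two $\Gamma$'s$\,\big]$ is bounded (by $2(\|f\|_\infty T + \|g\|_\infty)$) and, by Lemma~\ref{le:Wcontinuity}(1) plus continuity of $f,g$, converges to $0$ for a.e.\ $u$ as $n\to\infty$ (using $\mu^n \to \mu$ in $\Punif$, hence $\mu^n_t \to \mu_t$ for each $t$ — note one should check the time-marginal map $\mu' \mapsto \mu'_t$ is continuous, which it is since $x\mapsto x_t$ is continuous on $\C^d$). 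Then dominated convergence on $[0,1]$ (against Lebesgue measure, which is the common $U$-marginal) gives the $du$-integral $\to 0$, hence the first term $\to 0$ uniformly over $P^n \in \RC$. For the time-integral piece one also applies dominated convergence in $(t,u)$ jointly, or Fubini then DCT.

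\textbf{Assembling.} Combining: the second term vanishes by the fixed-$\mu$ continuity argument (Lemma~\ref{le:Punif-continuity} in $u$ and standard relaxed-control continuity in $(q,x)$), and the first term vanishes by the dominated-convergence argument above exploiting that all $P^n$ share the uniform $U$-marginal. One technical point to be careful about: in the second term I apply continuity of $\mu' \mapsto \langle \mu', h\rangle$ where $h(q,u,x) = \Gamma^\mu(q,u,x)$ — this $h$ is continuous in $(q,x)$ for a.e.\ fixed $u$ and bounded measurable, so the relevant statement is the version of Lemma~\ref{le:Punif-continuity} for $\P(\V \times [0,1] \times \C^d)$ with the common $\mathrm{Unif}[0,1]$ marginal on the middle coordinate; this is the same lemma applied with $E = \V \times \C^d$ in place of $E$ (continuity in the $E$-variables for a.e.\ $u$), so it goes through verbatim. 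I expect no difficulty there. The whole argument is short once the a.e.-convergence from Lemma~\ref{le:Wcontinuity}(1) and the shared-marginal observation are in hand; the only place requiring genuine thought is recognizing that uniform-in-$u$ convergence is unavailable and that integrating against the common marginal $\mathrm{Unif}[0,1]$ rescues the estimate.
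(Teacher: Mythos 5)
Your structural insight is right and matches the paper's: all $P \in \RC$ share the uniform $U$-marginal, so one may treat $\RC$ as a subset of $\Punif([0,1]\times E)$ with $E = \V\times\C^d$ and invoke Lemma~\ref{le:Punif-continuity} (continuity of the test function in the $E$-variables for a.e.~$u$ suffices). Your ``Term B'' (the fixed-$\mu$ piece, $\langle P^n - P, \Gamma^\mu\rangle$) is handled correctly this way. The gap is in ``Term A.'' You correctly recognize that the naive modulus-of-continuity bound requires uniform continuity of $f,g$ in $m$, which is not assumed, and you then fall back to: $u \mapsto \sup_{q,x}|\Gamma^{\mu^n}(q,u,x)-\Gamma^\mu(q,u,x)|$ is bounded, converges to $0$ for a.e.~$u$, so dominated convergence on $[0,1]$ kills the $du$-integral uniformly over $\RC$. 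But the a.e.-$u$ convergence of this $\sup_{q,x}$ does \emph{not} follow from Lemma~\ref{le:Wcontinuity}(1) plus mere continuity of $f,g$: with $x$ ranging over all of $\C^d$ (unbounded) and $f,g$ only continuous (not uniformly), the supremum over $(q,x)$ need not vanish even though the pointwise differences do. A concrete counterexample within the standing assumptions is $g(x,m) = \sin\big(x_1\langle m,\varphi\rangle\big)$ for a fixed bounded continuous $\varphi \geq 0$: this $g$ is bounded and jointly continuous, yet $\sup_{x}|g(x,m_n)-g(x,m)| = 2$ whenever $\langle m_n,\varphi\rangle \neq \langle m,\varphi\rangle$. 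So your claimed bound, uniform over all of $\RC$, is false; one would at minimum need to invoke tightness of the specific sequence $\{P^n\}$ to restrict $(q,x)$ to compacts, and you do not do this.

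The paper avoids the whole difficulty by not decomposing at all. The key step (which you did not take) is to establish that $(q,x,\mu) \mapsto \Gamma^\mu(q,u,x)$ is \emph{jointly} continuous for a.e.~$u$ --- via Lemma~\ref{le:Wcontinuity}(1), which shows that $\mu \mapsto \textsf{W}\mu_t(u)$ is continuous for a.e.~$u$ --- and then apply the ``shared-marginal'' continuity lemma once, on the enlarged state space where $\mu$ itself is treated as a coordinate (this is the content of \cite[Corollary A.5]{lacker2015mean}, which packages exactly this joint-continuity-along-sequences argument). Concretely, for a sequence $(\mu^n,P^n)\to(\mu,P)$, lift $P^n$ to $P^n\otimes\delta_{\mu^n}$ on $[0,1]\times\big(\V\times\C^d\times\Punif([0,1]\times\C^d)\big)$, a space where the first marginal is still uniform; then Lemma~\ref{le:Punif-continuity}-type continuity against the test function $\tilde h(u,(q,x,\nu)):=\Gamma^\nu(q,u,x)$ gives the result in one stroke, with no uniformity over $(q,x)$ ever needed. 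Your split was reasonable as a plan of attack, but it reintroduces precisely the uniformity that the abstract lemma is designed to avoid.
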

\begin{proof}
Lemma \ref{le:Wcontinuity}(1) and the boundedness and continuity of $g$ and $f$ imply that $\Gamma^{\mu}(q,u,x)$ is a continuous function of $(q,x,\mu)$ for a.e.\ $u \in [0,1]$; see \cite[Corollary A.5]{lacker2015mean}. In addition, $\Gamma^{\mu}$ is measurable on $\Omega$. The claim follows by applying Lemma \ref{le:Punif-continuity} with $E=\V \times \C^d$, noting that $\RC$ can be viewed as a subset of $\Punif([0,1] \times E)$.
\end{proof}

\begin{proof}[Proof of Proposition \ref{pr:relaxedexistence}]
We will apply the Kakutani-Fan-Glicksberg fixed point theorem \cite[Theorem 1]{fan1952fixed}, which requires that we identify a nonempty compact convex set $K \subset \P([0,1] \times \C^d)$ such that:
\begin{enumerate}
\item $\Phi(\mu) \subset K$ for each $\mu \in K$.
\item $\Phi( \mu )$ is nonempty and convex for each $\mu \in K$.
\item The graph $\{(\mu ,\mu') : \mu \in K, \ \mu'  \in \Phi(\mu )\}$ is closed.
\end{enumerate}
A good choice turns out to be $K := \{P \circ (U,X)^{-1} : P \in \RC\}$.
Property (1) is then clearly satisfied, because $\RC^*(\mu) \subset \RC$ for all $\mu$.

Let us prove that $K$ is compact and convex, as it is easily seen to be nonempty. First, note that $\RC$ is the set of $P \in \P(\Omega)$ satisfying $P \circ (U,X_0)^{-1} = \lambda$ and
\begin{align}
\langle P, h(N^\varphi_t-N^\varphi_s)\rangle = 0, \label{eq:mtgconstraint}
\end{align}
for all $T \ge t > s \ge 0$, $\varphi \in C^\infty_c(\R^d)$, and bounded continuous $\F_s$-measurable functions $h$ (which generate the $\sigma$-field $\F_s$).
This shows clearly that $\RC$ is convex, and thus so is $K$. To see that $\RC$ is closed, note that the continuity of $(b,\sigma)$ ensure that $L\varphi$ is jointly continuous, and thus so is $N^\varphi_t : \Omega \to \R$ by \cite[Corollary A.5]{lacker2015mean}. It follows that \eqref{eq:mtgconstraint} is closed under weak limits, and so $\RC$ is a closed set. To see that $\RC$ is pre-compact, we note easily that it is tight, because $[0,1] \times \V$ is compact, and because $\{P \circ X^{-1} : P \in \RC\}$ is easily seen to be tight as a consequence of the boundedness of $(b,\sigma)$, e.g., by \cite[Theorem 1.4.6]{stroock1997multidimensional}.
The compactness of $K$ follows from compactness of $\RC$, because the map $P \mapsto P \circ (U,X)^{-1}$ is continuous.

Next, for each $\mu \in K$, note that  $\RC^*(\mu)$ is nonempty as a consequence of the continuity of $P \mapsto \langle P,\Gamma^{\mu}\rangle$ from Lemma \ref{le:continuity-cost} and compactness of $\RC$ shown above; it follows that $\Phi(\mu)$ is also nonempty. Convexity of $\RC^*(\mu)$ follows from the linearity of $P \mapsto \langle P,\Gamma^{\mu}\rangle$ and the convexity of $\RC$. In turn, convexity of $\Phi(\mu)$ follows from convexity of $\RC^*(\mu)$ and linearity of the map $P \mapsto P \circ (U,X)^{-1}$.

It remains to prove the closedness of the graph of $\Phi$ as in (3) above. By continuity of $P \mapsto P\circ (U,X)^{-1}$ and compactness of $K$, it suffices to prove the closedness of
\[
\{(\mu,P) : \mu \in K, \, P \in \RC^*(\mu)\}.
\]
Suppose $\mu_n \to \mu$ and $P_n \to P$, with $\mu,\mu_n \in K$, $P_n \in \RC^*(\mu_n)$, and $P \in \RC$. To show that $P \in \RC^*(\mu)$, we must show that $\langle P,\Gamma^{\mu}\rangle \ge \langle Q,\Gamma^{\mu}\rangle$ for every $Q \in \RC$.
This follows easily from the joint continuity of  Lemma \ref{le:continuity-cost}(2), which yields
\begin{align*}
\langle P,\Gamma^{\mu}\rangle &= \lim_n \langle P_n,\Gamma^{\mu_n}\rangle \ge \lim_n \langle Q,\Gamma^{\mu}_n\rangle = \langle Q,\Gamma^{\mu}\rangle,
\end{align*}
with the inequality coming from the assumption $P_n \in \RC^*(\mu_n)$. This completes the proof.
\end{proof}

\subsection{Construction of Markovian equilibrium} \label{se:Markov}

We now construct a Markovian equilibrium, as defined in Section \ref{se:graphongame}, thereby proving Theorem \ref{th:existence}. We follow the strategy of the proof of \cite[Theorem 3.7]{lacker2015mean}, based on Markovian projection  \cite{brunick2013mimicking}.
This section makes heavier use of the notation $(\Lambda,U,X)$ for the coordinate maps on $\Omega$.

Let $\mu$ be any fixed point,  $\mu \in \Phi(\mu)$, the existence of which is guaranteed by Proposition \ref{pr:relaxedexistence}. Note that $\mu \in \Phi(\mu)$ is equivalent to the existence of $P \in \RC^*(\mu)$ such that $\mu = P \circ (U,X)^{-1}$. Since $P \in \RC$, the definition of $\RC$ and a standard martingale problem argument (e.g., \cite[Theorem 2.5]{nicole1987compactification}) shows that there exists a $P$-Brownian motion $B$ such that 
\begin{equation*}
dX_t = \int_A b(t, X_t, a) \Lambda_t(da)dt + \sigma(t, X_t) dB_t.
\end{equation*}

To handle the additional variable $U$, we simply note that the $(d+1)$-dimensional process $(U, X_t)_{t \in [0,T]}$ is an It\^o process in its own right, 
\begin{align*}
d\begin{pmatrix}
U \\ X_t
\end{pmatrix} = \begin{pmatrix}
0 \\\int_A b(t, X_t, a) \Lambda_t(da) 
\end{pmatrix}\,dt + \begin{pmatrix}
 0_{d_0}^\top \\ \sigma(t, X_t) 
\end{pmatrix} d B_t.
\end{align*}

Consider jointly measurable functions $(\widehat{b},\widehat{f}) : [0,T] \times [0,1] \times \R^d \to \R^d \times \R$ satisfying
\begin{align*}
\widehat{b}(t,U,X_t) &= \E\bigg[ \int_A b(t, X_t, a)  \Lambda_t(da) \,\bigg|\, U,X_t\bigg], \\
\widehat{f}(t,U,X_t) &= \E\bigg[ \int_A f(t, X_t,\textsf{W}\mu_t(U), a)  \Lambda_t(da) \,\bigg|\, U,X_t\bigg], \ \ \ P-a.s., \ a.e. \ t \in [0,T].
\end{align*}
Such functions exist by \cite[Proposition 5.1]{brunick2013mimicking}.
Applying the mimicking theorem \cite[Corollary 3.7]{brunick2013mimicking}, we may find a process $(\widehat{U}_t,\widehat{X}_t)_{t \in [0,T]}$, perhaps on another probability space $(\widehat\Omega,\widehat\F,\widehat\PP)$ with another Brownian motion $\widehat{B}$, solving the SDE
\begin{align}
d\begin{pmatrix}
 \widehat{U}_t \\ \widehat{X}_t
\end{pmatrix} = \begin{pmatrix}
0 \\ \widehat{b}(t,U,X_t) 
\end{pmatrix}\,dt + \begin{pmatrix}
0_{d_0}^\top \\ \sigma(t, \widehat{X}_t) 
\end{pmatrix} d \widehat{B}_t, \label{pf:Xhat-SDE}
\end{align}
and satisfying $(\widehat{U}_t,\widehat{X}_t) \stackrel{d}{=} (U,X_t)$ for each $t \in [0,T]$. Part of the definition of an SDE solution, of course, is that $\widehat{B}$ is a Brownian motion relative to the filtration $\widehat{\FF}=(\widehat{\F}_t)_{t \in [0,T]}$ generated by $(\widehat{U},\widehat{X},\widehat{B})$. From the dynamics \eqref{pf:Xhat-SDE} we deduce that $\widehat{U}_t=\widehat{U}_0$ for all $t$, which implies that $\widehat{U} := \widehat{U}_0$ is Unif$[0,1]$ since $U$ is. Hence, $\widehat{U}$ is a.s.\ $\widehat{\F}_0$-measurable, and in particular independent of $B$.

Now, for $(t,x,m) \in [0,T] \times \R^d \times \M_+(\R^d)$ let  $\mathcal{S}(t,x,m) \subset \R^d \times \R$ denote the set defined in \eqref{asmp:convex}. From its assumed convexity, we deduce that $(\widehat{b}(t,U,X_t),\widehat{f}(t,U,X_t))$ belongs a.s.\ to $\mathcal{S}(t,X_t,\textsf{W}\mu_t(U))$. Thus, using a measurable selection result from \cite[Theorem A.9]{haussmann1990existence}, there exist measurable functions $\widehat{\alpha} : [0,T] \times [0,1] \times \R^d \to A$ and $\widehat{z} : [0,T] \times [0,1] \times \R^d \to \R_+$ such that,

$P$-a.s., for a.e.\ $t \in [0,T]$,
\begin{align}
\widehat{b}(t,U,X_t) &= b(t, X_t, \widehat{\alpha}(t,U,X_t)), \label{eq:bhat} \\
\widehat{f}(t,U,X_t) &= f(t, X_t,\textsf{W}\mu_t(U), \widehat{\alpha}(t,U,X_t)) - \widehat{z}(t,U,X_t). \label{eq:fhat}
\end{align}
Applying \eqref{eq:bhat}, the dynamics \eqref{pf:Xhat-SDE} can then be written as
\begin{align*}
d\widehat{X}_t = b(t, \widehat{X}_t, \widehat{\alpha}(t,\widehat{U},\widehat{X}_t))dt + \sigma(t,\widehat{X}_t)d\widehat{B}_t.
\end{align*}
Note that $\widehat\alpha$ belongs to $\Aunif$, as defined in Section \ref{se:graphongame}.
By uniqueness of the SDE, in the notation of Section \ref{se:graphongame}, we have
\begin{align}
(\widehat{U},\widehat{X}) \stackrel{d}{=} (U,X^{\widehat{\alpha}}).
\end{align}
As in Remark \ref{re:relaxation}, the joint law $P^{\widehat\alpha}$ of $(dt\delta_{\widehat\alpha(t,\widehat{U},\widehat{X}_t)}(da),\widehat{U}, \widehat{X})$ is thus an element of $\RC$. Let $\widehat\mu \in \Punif([0,1] \times \C^d)$ denote the joint law of $(\widehat{U},\widehat{X})$. 
Then, as in \eqref{eq:relaxation-JW}, we have
\begin{align} 
\langle P^{\widehat\alpha},\Gamma^{\widehat\mu}\rangle &=  J_W(\widehat\mu,\widehat\alpha). \label{pf:optimal-value-Phat}
\end{align}
We will complete the proof by showing that in fact  $\langle P^{\widehat\alpha},\Gamma^{\widehat\mu}\rangle \ge J_W(\widehat\mu,\alpha)$ for all $\alpha \in \Aunif$. Again using \eqref{eq:relaxation-JW}, it suffices to show 
 that $P^{\widehat\alpha} \in \RC^*(\widehat\mu)$, i.e., $\langle P^{\widehat\alpha},\Gamma^{\widehat\mu}\rangle \ge \langle Q,\Gamma^{\widehat\mu}\rangle$ for all $Q \in \RC$.

To this end, note that $(\widehat{U},\widehat{X}_t) \stackrel{d}{=}(U,X_t)$ and thus $\widehat\mu_t=\mu_t$ for each $t \in [0,T]$. Since $\Gamma^\mu$, defined in \eqref{def:Gamma}, depends on $\mu$ only through its marginals $(\mu_t)_{t \in [0,T]}$, 
\begin{align}
\Gamma^{\mu}(q,u,x)=\Gamma^{\widehat\mu}(q,u,x), \quad \text{ for all } \ (q,u,x) \in \Omega. \label{pf:Gamma-bar-vs-hat}
\end{align}
Hence,
\begin{align*}
\langle P^{\widehat\alpha},\Gamma^{\widehat\mu}\rangle &= \langle P^{\widehat\alpha},\Gamma^{\mu}\rangle = \widehat\E \left[ \int_0^T f(t,\widehat{X}_t,\textsf{W}\mu_t(\widehat{U}),\widehat\alpha(t,\widehat{U},\widehat{X}_t))dt + g(\widehat{X}_T,\textsf{W}\mu_T(\widehat{U}))\right],
\end{align*}
where $\widehat\E$ denotes expectation on $(\widehat\Omega,\widehat\F,\widehat\PP)$. Using Fubini's theorem and the equality in law $(\widehat{U},\widehat{X}_t) \stackrel{d}{=}(U,X_t)$ for each $t \in [0,T]$, we find
\begin{align*}
\langle P^{\widehat\alpha},\Gamma^{\widehat\mu}\rangle &=  \E\left[ \int_0^T f(t,X_t,\textsf{W}\mu_t(U),\widehat\alpha(t,U,X_t))dt + g(X_T,\textsf{W}\mu_T(U))\right].
\end{align*}
The identity \eqref{eq:fhat} and the definition of $\widehat{f}$ imply
\begin{align*}
f(t, X_t,\textsf{W}\mu_t(U), \widehat{\alpha}(t,U,X_t)) &\ge \E\bigg[ \int_A f(t, X_t,\textsf{W}\mu_t(U), a)  \Lambda_t(da) \,\bigg|\, U,X_t\bigg].
\end{align*}
Using this, the tower property, and the definition of $P$, we deduce
\begin{align*}
\langle P^{\widehat\alpha},\Gamma^{\widehat\mu}\rangle &\ge \E\left[ \int_0^T \int_A f(t, X_t,\textsf{W}\mu_t(U), a)  \Lambda_t(da) dt + g(X_T,\textsf{W}\mu_T(U))\right] = \langle P,\Gamma^{\mu}\rangle.
\end{align*}
We know by assumption that $P \in \RC^*(\mu)$. Hence, for any $Q \in \RC$, we have $\langle P,\Gamma^{\mu}\rangle \ge \langle Q,\Gamma^{\mu}\rangle$. Using again \eqref{pf:Gamma-bar-vs-hat}, we deduce finally that $\langle P^{\widehat\alpha},\Gamma^{\widehat\mu}\rangle \ge \langle Q,\Gamma^{\mu}\rangle=\langle Q,\Gamma^{\widehat\mu}\rangle$ for all $Q \in \RC$, which completes the proof of Theorem \ref{th:existence} as explained above. \hfill \qedsymbol

\begin{remark} \label{re:Markov-optimality}
The argument in this section shows that, for each $\mu$,
\begin{align}
\sup_{\beta \in \Aunif}J_W(\mu,\beta) &= \sup_{P \in \RC}\langle P, \Gamma^{\mu}\rangle. \label{pf:DPP-value1}
\end{align}
That is, Markovian controls achieve the same value as the more general controls allowed in $\RC$, which may depend on additional randomness. See \cite{haussmann1990existence,nicole1987compactification}  for more general studies of this well known principle.
\end{remark}

\subsection{The case of constant degree}

\begin{proof}[Proof of Proposition \ref{prop:constdegree}]
With $\nu \in C([0,T];, \P(\R^d)$ and $\mu_t =\mathrm{Unif}[0,1] \times \nu_t$ as in the statement of the proposition, the key point is the simple identity $\nu_t=\textsf{W}\mu_t(u)$. Indeed, for bounded measurable $\varphi : \R^d \to \R$, we have
\begin{align*}
\langle \textsf{W}\mu_t(u),\varphi\rangle &= \int_{[0,1]\times \R^d} W(u,v)\, \varphi(x) \,\mu_t(dv, dx) \\
	&=\int_0^1 \int_{\R^d} W(u,v)\, \varphi(x) \,\nu_t(dx)\,dv \\
	&= \int_{\R^d} \varphi(x) \,\nu_t(dx),
\end{align*}
with the last identity following from Fubini's theorem and the assumption \eqref{def:const-degree}. Then $J_{1}(\nu,\alpha) = J_W(\mu,\alpha)$ for any $\alpha\in \A_1$. Since $\nu$ is a mean field equilibrium with control $\alpha^*$, we have
\begin{align*}
J_W(\mu,\alpha^*) = J_{1}(\nu,\alpha^*) = \sup_{\alpha\in \A_1}J_{1}(\nu,\alpha) = \sup_{\alpha\in \A_1}J_W(\mu,\alpha).
\end{align*}
The only remaining subtlety is to argue that $\sup_{\alpha\in \A_1}J_W(\mu,\alpha)=\sup_{\alpha\in \Aunif}J_W(\mu,\alpha)$. That is, the optimal value is the same regardless of whether one allows the controls to depend on an independent uniform $U$. This can be argued by way of a Markovian projection argument as in Section \ref{se:Markov}, or by directly applying \cite[Theorem 3.7]{lacker2015mean}.
\end{proof}

\subsection{Uniqueness } \label{subse:proof_unique}

This section proves Proposition \ref{prop:unique_W_eq}, relying on the recasting of the graphon game as a mean field game as  in Section \ref{se:PDE}. The key point is that the monotonicity condition (3) in Proposition \ref{prop:unique_W_eq} translates precisely to the usual Lasry-Lions monotonicity condition for the associated mean field game. Using the same notation of Section \ref{se:PDE}, (3) implies
\begin{align*}
\int_{[0,1] \times \R^d} &(\overline{g}(\overline{x}, {m}_1) - \overline{g}(\overline{x}, {m}_2)({m}_1 - {m}_2)(du,dx) \\
    	&= \int_{[0,1] \times \R^d} (g(x, \textsf{W}{m}_1(u)) - g(x, \textsf{W}{m}_2(u))({m}_1 - {m}_2)(du,dx) \le 0,
\end{align*}
for $m_1,m_2  \in \Punif([0,1] \times \R^d)$.
Similarly, $\overline{f}$ takes the form
 \begin{align*}
   \overline{f}(t, \overline{x}, {m},a) = f_1(t, x, a) + \overline{f}_2(t, \overline{x}, {m}), \ \text{ where }  \ \overline{f}_2(t, \overline{x}, {m}) = f_2(t, x, \textsf{W}{m}(u)),
 \end{align*}
and for all ${m}_1, {m}_2$ we have 
\begin{align*}
    \int_{[0,1] \times \R^d} &(\overline{f}_2(t,\overline{x}, {m}_1) - \overline{f}_2(t,\overline{x}, {m}_2)({m}_1 - {m}_2)(du,dx) \le 0.
\end{align*}
This shows that the mean field game of Section \ref{se:PDE} satisfies the Lasry-Lions monotonicity condition. The classical uniqueness proof from mean field game theory then applies; see \cite[Theorem 8.10]{lacker2018mean} for a short proof which applies directly in our context.

\begin{remark} \label{re:monotonicity-examples}
We mention here two classes of examples of $g$ satisfying \eqref{def:monotonicity}:
\begin{enumerate}
\item Suppose $\varphi :\R^d \to \R$ is bounded and continuous, and let
\begin{align*}
g(x,m) = \bigg(\varphi(x) - \int_{\R^d}\varphi(y)\,m(dy)\bigg)^2, \quad x \in \R^d, \, m \in \M_+(\R^d),
\end{align*}
A straightforward calculation shows that the left-hand side of \eqref{def:monotonicity} equals
\begin{align*}
-\int_0^1 \int_0^1 W(u,v)\psi(u)\psi(v)\,du\,dv, \quad\text{where } \ \psi(u) := \int_{\R^d}\varphi(x)\,({m}^1_u-{m}^2_u)(dx),
\end{align*}
for $m_i(du,dx)=dum^i_u(dx) \in \Punif([0,1] \times \R^d)$, $i=1,2$. Hence, if $W$ is positive semidefinite, we obtain the monotonicity condition \eqref{def:monotonicity}.
\item Suppose $g(x,m)=\int_{\R^d}\varphi(x,y)m(dy)$, where $\varphi :(\R^d)^2 \to \R$ is bounded and continuous. Then the left-hand side of \eqref{def:monotonicity} equals
\begin{align*}
\int_{[0,1]\times\R^d}\int_{[0,1]\times\R^d}W(u,v)\varphi(x,y)({m}_1 - {m}_2)(dv,dy)({m}_1 - {m}_2)(du,dx).
\end{align*}
This is nonpositive if, for instance, $W$ is positive semidefinite and $\varphi$ is negative semidefinite when viewed as integral operators, so that the tensor product of these two operators is negative definite.
\end{enumerate}
\end{remark}

\section{On the dependence of optimal controls on $U$} \label{se:dependence-on-U}

This short section develops two lemmas which will be used solely in the proof of Theorem \ref{th:approxEQ-continuous}, in Section \ref{se:pf:cont-kernel}. We give these results here because the proofs use the same relaxed formulation of Section \ref{se:existence&uniqueness}, particularly the Markovian projection of Section \ref{se:Markov}.

For this section, we fix $W \in L^1_+[0,1]^2$ and $\mu_\cdot \in C([0,T];\Punif([0,1] \times \R^d))$, and we introduce the following notation.
For $u \in [0,1]$, $m \in \P(\R^d)$, and $\alpha \in \A_1$, let $X^{m,\alpha}$ denote the unique in law solution of the SDE
\begin{align*}
dX^{m,\alpha}_t = b(t,X^{m,\alpha}_t,\alpha(t,X^{m,\alpha}_t))dt + \sigma(t,X^{m,\alpha}_t)dB_t, \qquad X^{m,\alpha}_0 \sim m,
\end{align*}
and define
\begin{align*}
J_W^{u,m}(\mu_\cdot,\alpha) := \E \left[\int_0^T f(t, X^{m,\alpha}_t, \text{\rm\textsf{W}}\mu_t(u), \alpha(t,X^{m,\alpha}_t)) dt + g(X^{m,\alpha}_T, \text{\rm\textsf{W}}\mu_T(u)) \right].
\end{align*}

The first lemma states essentially that, if $\alpha^* \in \Aunif$ is optimal for the given $\mu_\cdot$, then the control $(t,x) \mapsto \alpha^*(t, u , x)$ is  still optimal if we freeze the ``label" variable $U=u$, for almost every $u$. Recall in the following that $\lambda \in \Punif([0,1] \times \R^d)$ denotes the initial law, and $\lambda(du,dx)=du\lambda_u(dx)$ its disintegration.

\begin{lemma}\label{le:optimality_alpha_u}
Suppose $\alpha \in \Aunif$ satisfies $J_W(\mu_\cdot,\alpha) \ge J_W(\mu_\cdot,\beta)$ for all $\beta \in \Aunif$.
Then 
\begin{align*}
J_W^{u,\lambda_u}(\mu_\cdot,\alpha_u)  = \sup_{\beta \in \A_1} J_W^{u,\lambda_u}(\mu_\cdot,\beta),  \ \ \text{ for a.e. } u \in [0,1],
\end{align*}
where we define $\alpha_u\in\A_1$ by $\alpha_u(t,x) := \alpha(t,u,x)$. 
\end{lemma}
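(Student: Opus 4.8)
The plan is to exploit the Markovian-projection construction of Section \ref{se:Markov} together with a disintegration-over-$U$ argument. First I would recast the optimality of $\alpha$ in the relaxed language: by \eqref{eq:relaxation-JW} and Remark \ref{re:Markov-optimality}, the law $P^\alpha \in \RC$ attains $\sup_{P \in \RC}\langle P,\Gamma^{\mu_\cdot}\rangle$, and in particular $\langle P^\alpha,\Gamma^{\mu_\cdot}\rangle \ge \langle Q,\Gamma^{\mu_\cdot}\rangle$ for every $Q \in \RC$. The key observation is that both $\RC$ and the functional $\Gamma^{\mu_\cdot}$ disintegrate nicely over the label coordinate $U$: writing $P(du,\cdot) = du\,P^u(\cdot)$ for the disintegration of $P \in \RC$ with respect to $U$, one has $\langle P,\Gamma^{\mu_\cdot}\rangle = \int_0^1 \langle P^u, \Gamma^{\mu_\cdot}(\cdot,u,\cdot)\rangle\,du$, where $\Gamma^{\mu_\cdot}(\cdot,u,\cdot)$ is the cost functional with the label frozen at $u$; moreover the fixed value $u$ enters only through the deterministic measures $\text{\rm\textsf{W}}\mu_t(u)$, which are well-defined for a.e.\ $u$ by Lemma \ref{le:Wcontinuity}(1). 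Likewise, the martingale-problem constraints defining $\RC$ (condition (2) of the definition of $\RC$, equivalently \eqref{eq:mtgconstraint}) do not couple different values of $U$, so that $P \in \RC$ iff $P^u$ solves the frozen-label martingale problem with initial law $\lambda_u$ for a.e.\ $u$.

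The heart of the argument is then a standard ``disintegrated optimality'' lemma: if $P^\alpha$ maximizes $P \mapsto \int_0^1 \langle P^u, \Gamma^{\mu_\cdot}(\cdot,u,\cdot)\rangle\,du$ over the set of $P$ whose disintegrations $P^u$ range independently over the (nonempty, measurably-varying) feasible sets $\RC^u$ of frozen-label admissible laws, then $(P^\alpha)^u$ must maximize $\langle \cdot, \Gamma^{\mu_\cdot}(\cdot,u,\cdot)\rangle$ over $\RC^u$ for a.e.\ $u$. This is proved by contradiction: if optimality failed on a set of positive measure, a measurable-selection argument (using that $u \mapsto \RC^u$ has a measurable graph and the cost is jointly measurable and continuous in $P^u$, cf.\ the continuity in Lemma \ref{le:continuity-cost}) produces a measurable family $u \mapsto Q^u \in \RC^u$ strictly improving the integral, and reassembling $Q(du,\cdot) := du\,Q^u$ gives an element of $\RC$ beating $P^\alpha$, a contradiction. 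Here one must be slightly careful that the initial-law constraint $P^\alpha \circ (U,X_0)^{-1} = \lambda$ forces $(P^\alpha)^u \circ X_0^{-1} = \lambda_u$ for a.e.\ $u$, so the natural feasible set to compare against at level $u$ is exactly the frozen-label $\RC$ with initial law $\lambda_u$, which is what appears in $J_W^{u,\lambda_u}$.

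Finally I would translate back from the relaxed/disintegrated statement to the Markovian one. By construction $\alpha \in \Aunif$ yields $(P^\alpha)^u$ supported on strict controls of the form $dt\,\delta_{\alpha(t,u,X_t)}(da) = dt\,\delta_{\alpha_u(t,X_t)}(da)$, so $\langle (P^\alpha)^u, \Gamma^{\mu_\cdot}(\cdot,u,\cdot)\rangle = J_W^{u,\lambda_u}(\mu_\cdot,\alpha_u)$; on the other hand, by the frozen-label analogue of Remark \ref{re:Markov-optimality} (i.e.\ the equality $\sup_{\beta \in \A_1}J_W^{u,\lambda_u}(\mu_\cdot,\beta) = \sup_{Q \in \RC^u}\langle Q,\Gamma^{\mu_\cdot}(\cdot,u,\cdot)\rangle$, which is just \cite[Theorem 3.7]{lacker2015mean} applied to the frozen problem), the maximum of $\langle \cdot,\Gamma^{\mu_\cdot}(\cdot,u,\cdot)\rangle$ over $\RC^u$ equals $\sup_{\beta \in \A_1}J_W^{u,\lambda_u}(\mu_\cdot,\beta)$. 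Combining these with the a.e.\ optimality established in the previous paragraph gives the claim. The main obstacle I anticipate is the measurable-selection/disintegration bookkeeping in the second paragraph — making precise that $u \mapsto \RC^u$ is a measurable set-valued map with the right measurability of $\Gamma^{\mu_\cdot}(\cdot,u,\cdot)$ in $u$ (the latter resting on Lemma \ref{le:Wcontinuity}(1)), so that the contradiction family $u \mapsto Q^u$ can be chosen measurably and reassembled into a genuine element of $\RC$; the rest is routine given the machinery already set up in Section \ref{se:existence_GMFG_sol}.
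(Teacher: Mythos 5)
Your proposal follows essentially the same route as the paper's proof: pass to the relaxed formulation via Remark \ref{re:Markov-optimality}, observe that $\RC$ and $\Gamma^{\mu_\cdot}$ disintegrate along the $U$-coordinate (with frozen-label feasible sets $\RC_{u,\lambda_u}$), use a measurable-selection argument to pass from global maximization of $\langle\cdot,\Gamma^{\mu_\cdot}\rangle$ over $\RC$ to a.e.\ maximization of the disintegrated cost over $\RC_{u,\lambda_u}$, and then translate back to Markovian controls via the frozen-label analogue \eqref{pf:DPP-value2}. The only cosmetic difference is that the paper gets the $\sup$-integral interchange directly from a cited selection theorem (Bertsekas--Shreve), where you sketch a proof by contradiction; both rely on the same measurability facts, and the paper also spells out the one step you passed over lightly — that $P^\alpha(\cdot\,|\,U=u)=P^{\alpha_u}$ for a.e.\ $u$, which requires invoking well-posedness of the frozen-label SDE.
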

\begin{proof}
Recall the identity \eqref{pf:DPP-value1} from Remark \ref{re:Markov-optimality}.
For $u \in [0,1]$ and $m \in \P(\R^d)$, let us define $\RC_{u,m}$ as the set of $P \in \P(\Omega)$ such that $P \circ (U,X_0)^{-1} = \delta_u \times m$ and such that $(N^\varphi_t)_{t \in [0,T]}$ is a $P$-martingale for each $\varphi \in C^{\infty}_c(\R^d)$. The same argument as in Section \ref{se:Markov} which led to \eqref{pf:DPP-value1} (see also \cite[Theorem 3.7]{lacker2015mean} or \cite[Corollary 6.8]{nicole1987compactification}) shows that
\begin{align}
\sup_{\beta \in \A_1}J^{u,m}_W(\mu_\cdot,\beta) &= \sup_{P \in \RC_{u,m}}\langle P, \Gamma^{\mu_\cdot}\rangle. \label{pf:DPP-value2}
\end{align}
It is straightforward to check that $\{(u,P) : u \in [0,1], \ P \in \RC_{u,\lambda_u}\}$ is a Borel set in $[0,1] \times \P(\Omega)$.
Since the map $\P(\Omega) \ni P \mapsto \langle P, \Gamma^{\mu_\cdot}\rangle$ is Borel, a standard measurable selection theorem \cite[Proposition 7.50]{bertsekas1996stochastic} then shows that $u \mapsto \sup_{P \in \RC_{u,\lambda_u}}\langle P, \Gamma^{\mu_\cdot}\rangle$ is universally measurable, and
\begin{align*}
\int_0^1 \sup_{P \in \RC_{u,\lambda_u}}\langle P, \Gamma^{\mu_\cdot}\rangle \, du = \sup\bigg\{ \int_0^1 \langle P_u, \Gamma^{\mu_\cdot}\rangle\,du : P_\cdot \text{ Borel, } P_u \in \RC_{u,\lambda_u} \ a.e. \ u \bigg\},
\end{align*}
where ``$P_\cdot$ Borel" means that the map $[0,1] \ni u \mapsto P_u \in \P(\Omega)$ is Borel measurable. From the definitions, and noting that $\int_0^1 \delta_u \times \lambda_u \,du = \lambda$, it is straightforward to check that $\int_0^1 P_u\,du$ belongs to $\RC$ whenever $P_u \in \RC_{u,\lambda_u}$ for a.e.\ $u$. Conversely, if $P \in \RC$, then the regular conditional measure $P_u := P(\cdot\,|\,U=u)$ belongs to $\RC_{u,\lambda_u}$ for a.e.\ $u$. It follows that 
\begin{align*}
\sup\bigg\{ \int_0^1 \langle P_u, \Gamma^{\mu_\cdot}\rangle\,du : P_\cdot \text{ Borel, } P_u \in \RC_{u,\lambda_u} \ a.e. \ u \bigg\} = \sup_{P \in \RC}\langle P, \Gamma^{\mu_\cdot}\rangle,
\end{align*}
and also that, for $P^* \in \RC$,
\begin{align}
P^* \in \arg\max_{P \in \RC}\langle P, \Gamma^{\mu_\cdot}\rangle \quad \Longleftrightarrow \quad a.e. \ u, \ P^*(\cdot\,|\,U=u) \in \arg\max_{P \in \RC_{u,\lambda_u}}\langle P, \Gamma^{\mu_\cdot}\rangle. \label{pf:selectors1}
\end{align} 
Now, let $\alpha \in \arg\max_{\beta \in \Aunif}J_W(\mu_\cdot,\beta)$ be the given optimizer. In light of \eqref{pf:DPP-value1}, the measure $P^\alpha$ given as in Remark \ref{re:relaxation} then satisfies $P^\alpha \in \arg\max_{P \in \RC}\langle P, \Gamma^{\mu_\cdot}\rangle$. Hence, by \eqref{pf:selectors1}, the conditional measure $P^\alpha(\cdot\,|\,U=u)$ belongs to $\arg\max_{P \in \RC_{u,\lambda_u}}\langle P, \Gamma^{\mu_\cdot}\rangle$ for a.e.\ $u$. But, by well-posedness of the SDEs, we have $P^\alpha(\cdot\,|\,U=u) = P^{\alpha_u}$ for a.e.\ $u$ (cf.\ \cite[Appendix A]{lacker2020convergence}). 
Using \eqref{pf:DPP-value2}, we deduce that $\alpha_u \in \arg\max_{\beta \in \A_1}J^{u,\lambda_u}_W(\mu_\cdot,\beta)$ for a.e.\ $u$, as claimed.
\end{proof}

The next lemma and its corollary justify the claim in Theorem \ref{th:approxEQ-continuous} that assumptions (1) and (2a--d) imply (2). The lemma is a variation on known arguments, such as \cite[Section 5.6]{lacker2020convergence}. Essentially, by working with the relaxed formulation, the set-valued map of optimal control laws can be shown to have closed graph, and the idea is to argue that in certain cases this set-valued map is singleton-valued and thus necessarily continuous.

\begin{lemma} \label{le:continuity-alpha_u}
Suppose conditions (1) and (2a--d) of Theorem \ref{th:approxEQ-continuous} hold.
Then, for each $u \in [0,1]$, there exists a unique optimizer $\alpha^*_u$ for $\sup_{\alpha \in \A_1}J^{u,\lambda_u}_W(\mu_\cdot,\alpha)$. Moreover, the law $\L(X^{\lambda_u,\alpha^*_u})$ depends continuously on $u$.
\end{lemma}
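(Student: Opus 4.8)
The plan is to work in the relaxed formulation of Section \ref{se:relaxed}, using the constraint sets $\RC_{u,\lambda_u}$ introduced in the proof of Lemma \ref{le:optimality_alpha_u}. The key structural observation is that assumptions (2b--d) force the optimizer to be unique, while assumptions (1) and (2a) give enough continuity in $u$ to push a compactness-plus-uniqueness argument through. Concretely, I would proceed in four steps.

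\emph{Step 1: Uniqueness of the optimal relaxed control, hence of $\alpha^*_u$.} Fix $u$. By \eqref{pf:DPP-value2}, maximizing $J^{u,\lambda_u}_W(\mu_\cdot,\cdot)$ over $\A_1$ has the same value as maximizing $\langle P,\Gamma^{\mu_\cdot}\rangle$ over $P \in \RC_{u,\lambda_u}$. Under (2c) ($\sigma$ constant) and (2d) ($b$ affine in $a$), the drift $\int_A b(t,x,a)\Lambda_t(da)$ depends on $\Lambda_t$ only through its barycenter $\bar a_t := \int_A a\,\Lambda_t(da) \in A$ (using (2b), $A$ convex), so by Markovian projection (as in Section \ref{se:Markov}) any relaxed optimizer induces the same state law as a Markovian \emph{strict} control $\alpha(t,x)\in A$. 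Then the objective splits via the separability-in-$a$ structure: since $f_1$-type terms are strictly concave in $a$ by (2d) while $b$ is affine, Jensen's inequality shows that replacing $\Lambda_t(da)$ by $\delta_{\bar a_t}$ strictly increases the running cost unless $\Lambda_t=\delta_{\bar a_t}$ a.e., and strict concavity then pins down $\bar a_t$ uniquely given $(t,X_t)$ along the optimal state law. This yields a unique optimal Markovian control $\alpha^*_u\in\A_1$ (up to a.e.\ equality) and hence a unique optimal state law $\L(X^{\lambda_u,\alpha^*_u})$.

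\emph{Step 2: Closed graph of $u\mapsto$ (optimal laws).} Let $\nu^*_u := \L(X^{\lambda_u,\alpha^*_u}) \in \P(\C^d)$, equivalently the $X$-marginal of the unique $P^*_u\in\arg\max_{P\in\RC_{u,\lambda_u}}\langle P,\Gamma^{\mu_\cdot}\rangle$. I would show: if $u_n\to u$ and $P^*_{u_n}\to P$ weakly (a subsequence exists by tightness of $\bigcup_n\RC_{u_n,\lambda_{u_n}}$, which follows from boundedness of $(b,\sigma)$ and tightness of $\{\lambda_{u_n}\}$, the latter from continuity (2a)), then $P\in\RC_{u,\lambda_u}$ and $P$ is optimal. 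Membership follows because the martingale constraint \eqref{eq:mtgconstraint} is closed under weak limits and $P^*_{u_n}\circ(U,X_0)^{-1}=\delta_{u_n}\times\lambda_{u_n}\to\delta_u\times\lambda_u$ by (2a). Optimality requires $\langle P,\Gamma^{\mu_\cdot}\rangle \ge \langle Q,\Gamma^{\mu_\cdot}\rangle$ for all $Q\in\RC_{u,\lambda_u}$; for a comparison $Q$ I would build a Borel family $Q_\cdot$ with $Q_{u}=Q$ and $Q_{u_n}\in\RC_{u_n,\lambda_{u_n}}$, $Q_{u_n}\to Q$ — e.g.\ by solving the SDE with the same strict control as $Q$ but initial law $\lambda_{u_n}$ — and then pass to the limit using that $(\mu,P)\mapsto\langle P,\Gamma^\mu\rangle$ is continuous (Lemma \ref{le:continuity-cost}) together with continuity of $u\mapsto\textsf{W}\mu_t(u)$ supplied by Lemma \ref{le:Wcontinuity}(3), which applies precisely because of assumption (1) and the existence of a continuous disintegration $\mu_u$ — wait, here we only need continuity in the \emph{label argument of} $\Gamma$, so Lemma \ref{le:Wcontinuity}(3) is what is needed, and it requires (1). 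Combining, $P$ is optimal, so $P=P^*_u$ by Step 1; since every subsequential limit equals $P^*_u$, we get $P^*_{u_n}\to P^*_u$, i.e.\ $u\mapsto P^*_u$ is continuous, hence so is $u\mapsto\nu^*_u$.

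\emph{The main obstacle.} I expect the delicate point to be Step 1 — rigorously reducing relaxed optimizers to a unique strict Markovian control. The subtlety is that uniqueness must be argued along the (endogenously determined) optimal state law, and one must be careful that the Markovian projection preserves the value and that the strict-concavity argument is applied on the right measure; the separability assumption of the ambient setting is used but here only (2d)'s strict concavity in $a$ is invoked. A secondary technical nuisance in Step 2 is the construction of the approximating family $Q_{u_n}$ with the requisite Borel measurability and weak convergence; this is routine given strong well-posedness of the SDE and continuity (2a), but it must be spelled out. Everything else — tightness, closedness of the martingale problem, continuity of the cost functional — is already available from Sections \ref{se:existence_GMFG_sol} and \ref{se:W-continuity}.
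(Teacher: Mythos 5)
Your Step 1 has a genuine gap, and it is exactly the "delicate point" you flag without resolving. Your argument — Markovian projection plus Jensen with strict concavity "pins down $\bar a_t$ uniquely given $(t,X_t)$ along the optimal state law" — does reduce relaxed optimizers to strict Markovian controls, but it does not give uniqueness of the optimal \emph{control}, because two candidate optimal Markovian controls $\alpha_0,\alpha_1 \in \A_1$ produce state processes $X^0,X^1$ with possibly \emph{different} laws, and strict concavity of $a\mapsto f(t,x,m,a)$ has nothing directly to say about comparing controls across two distinct state measures. The paper closes this gap with a specific mixing device: take $X^0,X^1$ driven by independent Brownian motions, introduce an independent Bernoulli$(1/2)$ variable $S$, and observe that $X^S$ solves an SDE with control $\alpha_S$ and Brownian motion $B^S$. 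Applying the Markovian projection/Jensen argument to $X^S$ and using that $\alpha_0,\alpha_1$ are both optimal forces $\widehat\alpha(t,X^S_t)=\alpha_S(t,X^S_t)$ a.s.\ a.e.\ where $\widehat\alpha(t,x)=\E[\alpha_S(t,X^S_t)\,|\,X^S_t=x]$; this implies $\alpha_i(t,X^i_t)=\widehat\alpha(t,X^i_t)$ a.s.\ a.e.\ for $i=0,1$. Finally, Girsanov shows that $\L(X^0_t)$ and $\L(X^1_t)$ have full support for $t>0$, so $\alpha_0=\widehat\alpha=\alpha_1$ Lebesgue-a.e. You would need some argument of exactly this type (comparison on a single common state law with full support) to make Step 1 rigorous; the assertion as written does not establish uniqueness.

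Your Step 2 is a workable alternative to the paper's approach but is the more laborious route. The paper cites an existing continuity result for the set-valued map $(u,x)\mapsto\RC_{u,\delta_x}$ (\cite[Proposition 5.10(b)]{nicole1987compactification}), combines it with Berge's maximum theorem and continuity of $P\mapsto\langle P,\Gamma^{\mu_\cdot}\rangle$ to get a closed-graph (hence continuous, by singleton-valuedness) map $(u,x)\mapsto P^*_{u,\delta_x}$, and then mixes over the initial law via $P^*_{u,m}=\int P^*_{u,\delta_x}\,m(dx)$ to conclude. Your plan — tightness, closed martingale constraint, and a hand-built Borel family $Q_{u_n}\to Q$ to show that any subsequential limit is optimal — reproduces what Berge's theorem packages in one stroke, and the construction of the comparison family $Q_{u_n}$ would need to be spelled out carefully (measurability, weak convergence under perturbed initial law). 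This is fine in principle, but you should be aware you are re-proving a lemma the paper imports. Also note your mid-sentence uncertainty about whether $\mu_\cdot$ needs a continuous disintegration for the $u$-continuity of $\Gamma^{\mu_\cdot}$: this is a real subtlety to sort out (one cannot invoke condition (2) of Theorem \ref{th:approxEQ-continuous} here without circularity, since the lemma is used to establish it), and you should verify what continuity in $u$ is actually needed and from where it comes.
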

\begin{proof}
We have $b(t,x,a)=b_0(t,x)a+b_1(t,x)$ by assumption.
Fix $u \in [0,1]$ and $m \in \P(\R^d)$. Recall the formula \eqref{pf:DPP-value2} from the proof of Lemma \ref{le:optimality_alpha_u}. We first claim that any optimizer $P$ on the right-hand side of \eqref{pf:DPP-value2} is necessarily of the form $P=\L(dt\delta_{\alpha(t,X^{m,\alpha}_t)}(da),u,X^{m,\alpha})$ for some $\alpha \in \A_1$. To see this, note that we can write $P=\L(\Lambda,u,X )$, where $X$ solves
\begin{align}
dX_t = \Big(b_0(t,X_t)\int_A a\Lambda_t(da) + b_1(t,X_t)\Big)dt + \sigma dB_t, \quad X_0 \sim m. \label{pf:cont-alpha-1}
\end{align}
Letting $\widehat\alpha(t,X_t) = \E\big[\int_A a\Lambda_t(da)\,\big|\,X_t\big]$, and applying the Markovian projection \cite[Corollary 3.7]{brunick2013mimicking},  we find that $X_t\stackrel{d}{=} \widehat{X}_t$ for all $t \in [0,T]$, where $\widehat{X}$ solves the SDE
\begin{align*}
d\widehat{X}_t = \Big(b_0(t,\widehat{X}_t)\widehat\alpha(t,\widehat{X}_t) + b_1(t,\widehat{X}_t)\Big)dt + \sigma dB_t, \quad \widehat{X}_0 \sim m.
\end{align*}
By Jensen's inequality and strict concavity of $f(t,x,m,a)$ in $a$, we have
\begin{align*}
\langle P, \Gamma^{\mu_\cdot}\rangle &= \E \left[\int_0^T \int_A f(t, X_t, \textsf{W}\mu_t(u), a) \Lambda_t(da) dt + g(X_T, \textsf{W}\mu_T(u)) \right] \\
	&\le \E \left[\int_0^T f(t, X_t, \textsf{W}\mu_t(u), \widehat\alpha(t,X_t))  dt + g(X_T, \textsf{W}\mu_T(u)) \right] \\
	&= \E \left[\int_0^T f(t, \widehat{X}_t, \textsf{W}\mu_t(u), \widehat\alpha(t,\widehat{X}_t))  dt + g(\widehat{X}_T, \textsf{W}\mu_T(u)) \right] \\
	&= J_W^{u,m}(\mu_\cdot,\widehat\alpha),
\end{align*}
and this equality is strict unless $\int_A a\Lambda_t(da)=\widehat\alpha(t,X_t)$ a.s.\ a.e. This proves the first claim.

We next claim that in fact there is a unique optimizer $P$ on the right-hand side of \eqref{pf:DPP-value2}. Since we know the optimizers are Markovian, it suffices to show the optimal control $\alpha \in \A_1$ on the left-hand side of \eqref{pf:DPP-value2} is unique up to Lebesgue-a.e.\ equality. To see this, let $\alpha_0,\alpha_1 \in \A_1$ be optimizers. Then $X^i=X^{m,\alpha_i}$ solves
\begin{align*}
dX^i_t = \Big(b_0(t,X^i_t)\alpha_i(t,X^i_t) + b_1(t,X^i_t)\Big)dt + \sigma dB^i_t, \quad X^i_0 \sim m.
\end{align*}
We may assume $X^0$ and $X^1$ are defined on the same probability space, with $(X^0,B^0)$ independent of $(X^1,B^1)$. Let $S$ be a Bernoulli($1/2$) random variable, independent of everything else. Then $X^S$ solves the SDE
\begin{align*}
dX^S_t = \Big(b_0(t,X^S_t)\alpha_S(t,X^S_t) + b_1(t,X^S_t)\Big)dt + \sigma dB^S_t, \quad X^S_0 \sim m,
\end{align*}
where we note that $B^S$ is a Brownian motion.
Define $\widehat\alpha(t,X^S_t)=\E[\alpha_S(t,X^S_t) \,|\,X^S_t]$. Arguing as above via Jensen, we must have $\widehat\alpha(t,X^S_t)= \alpha_S(t,X^S_t)$ a.s.\ a.e. , as otherwise this control would produce a strictly higher reward than $\alpha_0$ or $\alpha_1$.
This implies $\widehat\alpha(t,X^0_t)= \alpha_0(t,X^0_t)$ and $\widehat\alpha(t,X^1_t)= \alpha_1(t,X^1_t)$ a.s.\ a.e. 
The laws of $X^0_t$ and $X^1_t$ have full support for each $t > 0$ by Girsanov's theorem, and we deduce that $\alpha_0=\alpha_1$ a.e.

Finally, knowing that the optimizer $P^*_{u,m} \in \RC_{u,m}$ on the right-hand side of \eqref{pf:DPP-value2} is unique, we will prove that $(u,m) \mapsto P^*_{u,m}$ is continuous, which implies our claim by composition with the continuous map $u \mapsto (u,\lambda_u)$. By  \cite[Proposition 5.10(b)]{nicole1987compactification}, the set valued map $\RC_{u,\delta_x}$ is continuous in $(u,x) \in [0,1] \times \R^d$. By Berge's theorem \cite[Theorem 17.31]{aliprantisborder} and continuity of $P \mapsto \langle P,\Gamma^{\mu_\cdot}\rangle$, the set-valued map $\RC_{u,x}^* := \arg\!\max_{P \in \RC_{u,\delta_x}}\langle P,\Gamma^{\mu_\cdot}\rangle$ has closed graph. We have just shown it to in fact be singleton-valued, or $\RC_{u,x}^*=\{P^*_{u,\delta_x}\}$ for each $(u,x)$. That is, the function $(u,x) \mapsto P^*_{u,\delta_x}$ has closed graph and is thus continuous. To conclude, simply note that $P^*_{u,m} = \int_{\R^d} P^*_{u,\delta_x}\,m(dx)$ (e.g., by \cite[Theorem 5.11(c)]{nicole1987compactification}).
\end{proof}

\begin{corollary} \label{co:cont-disint}
Suppose the assumptions of Lemma \ref{le:continuity-alpha_u} hold. Assume $\mu_\cdot$ is a W-equilibrium, with equilibrium control $\alpha^*$. Then the disintegration $[0,1] \ni u \mapsto \L(X^{\alpha^*}\,|\,U=u)\in\P(\C^d)$ admits a weakly continuous version.
\end{corollary}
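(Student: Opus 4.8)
\emph{Proof proposal.} The plan is to combine Lemma \ref{le:optimality_alpha_u} and Lemma \ref{le:continuity-alpha_u} with the standard conditioning fact, already invoked in the proof of Lemma \ref{le:optimality_alpha_u}, that conditioning the equilibrium state process on the initial label reduces it to the frozen-label SDE. Concretely, I would argue in three steps, all short.

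First, since $\mu_\cdot$ is a $W$-equilibrium with equilibrium control $\alpha^*$, by definition $\alpha^*$ is optimal, i.e.\ $J_W(\mu_\cdot,\alpha^*) \ge J_W(\mu_\cdot,\beta)$ for all $\beta \in \Aunif$. Lemma \ref{le:optimality_alpha_u} then applies and gives that, setting $\alpha^*_u(t,x) := \alpha^*(t,u,x) \in \A_1$, the control $\alpha^*_u$ maximizes $J^{u,\lambda_u}_W(\mu_\cdot,\cdot)$ over $\A_1$ for a.e.\ $u \in [0,1]$. Second, under hypotheses (1) and (2a--d) — exactly those of Lemma \ref{le:continuity-alpha_u} — that lemma furnishes for \emph{every} $u$ a \emph{unique} maximizer of $\sup_{\alpha \in \A_1}J^{u,\lambda_u}_W(\mu_\cdot,\alpha)$ and asserts that $u \mapsto \L(X^{\lambda_u,\alpha^*_u})$ is weakly continuous, where here $\alpha^*_u$ denotes that unique maximizer. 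Comparing the two statements and using uniqueness (up to Lebesgue-a.e.\ equality), the two objects called $\alpha^*_u$ coincide for a.e.\ $u$, and hence by uniqueness in law of the SDE the two laws $\L(X^{\lambda_u,\alpha^*_u})$ agree for a.e.\ $u$.

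Third, I would identify $\L(X^{\alpha^*}\mid U=u)$ with $\L(X^{\lambda_u,\alpha^*_u})$. Since $U$ is $\F_0$-measurable and $B$ is an $\FF$-Brownian motion (hence independent of $\F_0$), conditionally on $U=u$ the process $B$ is still a Brownian motion, the conditional law of $X_0$ is $\lambda_u$, and the conditioned dynamics of $X^{\alpha^*}$ are precisely those of $X^{\lambda_u,\alpha^*_u}$; by strong well-posedness of the SDE under the standing assumptions, this yields $\L(X^{\alpha^*}\mid U=u) = \L(X^{\lambda_u,\alpha^*_u})$ for a.e.\ $u$. This is exactly the identity $P^{\alpha^*}(\cdot\mid U=u) = P^{\alpha^*_u}$ used in the proof of Lemma \ref{le:optimality_alpha_u} (cf.\ \cite[Appendix A]{lacker2020convergence}), projected onto the $\C^d$-coordinate. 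Chaining the three conclusions, the weakly continuous map $u \mapsto \L(X^{\lambda_u,\alpha^*_u})$ from Lemma \ref{le:continuity-alpha_u} agrees $du$-a.e.\ with the disintegration $u \mapsto \L(X^{\alpha^*}\mid U=u)$, and therefore serves as the required weakly continuous version.

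There is no serious obstacle here: the corollary is essentially the concatenation of the two preceding lemmas. The only point demanding a little care is the bookkeeping of the a.e.-defined objects — reconciling the maximizer $\alpha^*(t,u,\cdot)$ produced (a.e.) by Lemma \ref{le:optimality_alpha_u}, the \emph{everywhere}-defined unique maximizer of Lemma \ref{le:continuity-alpha_u}, and the a.e.-defined disintegration $u\mapsto\L(X^{\alpha^*}\mid U=u)$ — and observing that matching them on a set of full measure is enough, since a version of a disintegration that coincides a.e.\ with a continuous map can simply be replaced by that map.
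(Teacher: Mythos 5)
Your proposal is correct and follows exactly the paper's own proof: apply Lemma \ref{le:optimality_alpha_u} to get a.e.\ optimality of $\alpha^*_u$, invoke the uniqueness and continuity conclusions of Lemma \ref{le:continuity-alpha_u}, and identify $\L(X^{\alpha^*}\mid U=u)$ with $\L(X^{\lambda_u,\alpha^*_u})$ a.e. The only difference is that you spell out the conditioning identity in Step 3 in more detail than the paper, which simply states it as a known fact.
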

\begin{proof}
Let $\alpha$ denote the equilibrium control corresponding to $\mu_\cdot$. We note again that $u \mapsto \L(X^{\lambda_u,\alpha_u})$ is a version of the conditional law $\L(X^{\alpha^*}\,|\,U=u)$.
By Lemma \ref{le:optimality_alpha_u},  the control $\alpha_u(t,x) := \alpha(t,u,x)$ optimizes $J_W^{u,\lambda_u}(\mu_\cdot,\cdot)$ over $\A_1$, for a.e.\ $u \in [0,1]$. By Lemma \ref{le:continuity-alpha_u}, there is a unique (up to Lebesgue a.e.\ equality) optimizer $\alpha^*_u \in \A_1$ of $J_W^{u,\lambda_u}(\mu_\cdot,\cdot)$. Hence $\alpha_u=\alpha^*_u$ for a.e.\ $u$, and we deduce that $\L(X^{\lambda_u,\alpha_u})=\L(X^{\lambda_u,\alpha^*_u})$ for a.e.\ $u$. The claim now follows from the last statement of Lemma \ref{le:continuity-alpha_u}.
\end{proof}

\section{Convergence of empirical measures} \label{sec:conv_emp_meas}

In preparation for Section \ref{se:approxEQ}, which proves our results about approximate equilibria, we study in this section the general principles underlying these results. 
These results deal with the convergence of neighborhood empirical measures, under various assumptions on the underlying distributions and kernel.
We work throughout this section with a general Polish space $E$.
Recall the notation $I^n_i$ from \eqref{def:stepgraphon}.

\subsection{General kernels} \label{se:emp-generalW} 
Let $(U,X)$ be a random variable taking values in $[0,1] \times E$, with law $\mu\in\Punif([0,1] \times E)$. Let $n \in \N$, and let $I^n_i = [(i-1)/n,i/n)$ as before for $i \in [n]$.
For each $n \in \N$ let $U^n_i\sim\mathrm{Unif}(I^n_i)$, and with
\begin{align*}
\L(X^n_i\,|\,U^n_i=u) = \L(X\,|\,U=u), \quad u \in I^n_i.
\end{align*}
In other words, the law of $(U^n_i,X^n_i)$ is the conditional law of $(U,X)$ given $\{U\in I^n_i\}$.
This entails in particular that, for bounded measurable $h : [0,1] \times E \to \R$,
\begin{align}
\langle\mu,h\rangle = \E[h(U,X)] &=  \sum_{i=1}^n \int_{I^n_i}\E[h(u,X)\,|\,U=u]\,du \nonumber  \\
	&= \sum_{i=1}^n \int_{I^n_i}\E[h(u,X^n_i)\,|\,U^n_i=u]\,du  \nonumber \\
	&= \frac{1}{n}\sum_{i=1}^n \E[h(U^n_i,X^n_i)].  \label{general-keyidentity}
\end{align}
Assume $(U,X)$ and $(U^n_i,X^n_i)_{i=1}^n$ are defined on the same probability space and are independent.
Let $W \in L^1_+[0,1]^2 $, and recall the definition of $\textsf{W}\mu(u)$ from \ref{def:Woperator-measure};
with $(U,X) \sim \mu$, note that we may write $\langle \textsf{W}\mu(u),\varphi\rangle = \E[W(u,U)\varphi(X)]$ for bounded measurable $\varphi$.
Let $(\xi^n_{ij})$ again be a $n \times n$ matrix with values in $[0,1]$ and with zeros on its diagonal. 
Recall that $W_{\xi^n}$ denotes the associated step kernel, as in  \eqref{def:stepgraphon}; we will use repeatedly the fact that $W_{\xi^n}(U^n_i,U^n_j)=\xi^n_{ij}$. Define lastly the (random) empirical measures
\begin{align}
M^n_i := \frac{1}{n}\sum_{j=1}^n \xi^n_{ij} \delta_{X^n_j} = \frac{1}{n}\sum_{j=1}^n W_{\xi^n}(U^n_i,U^n_j) \delta_{X^n_j}. \label{def:Mni-general}
\end{align}
Recall the definition of the strong operator topology from Section \ref{se:graphon-operators}.
The main result of this section is the following theorem, which we will apply only in cases where $h(u,x,m)$ does not depend on $x$, but the proof of the general case given here is not any more difficult.

\begin{theorem} \label{th:generalprinciple}
Assume $W_{\xi^n}$ converges to $W$ in the strong operator topology, and assume \ref{asmp:generalprinciple-A} holds.
Let $h : [0,1] \times E \times \M_+(E) \to \R$ be a bounded measurable function such that $h(u,x,\cdot)$ is continuous on $\M_+(E)$ for each fixed $(u,x) \in [0,1] \times E$. Then 
\begin{align}
\frac{1}{n}\sum_{i=1}^n\E\left[ h\left(U^n_i,X^n_i,M^n_i\right)\right] \to \E[h(U,X,
\text{\rm\textsf{W}}\mu(U))] \label{goal1}
\end{align}
\end{theorem}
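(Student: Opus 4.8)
The plan is to decompose the difference into two pieces: one that replaces the random empirical measure $M^n_i$ by its conditional expectation $\widetilde M^n_i := \E[M^n_i \mid U^n_i]$ (handled by a variance estimate using \ref{asmp:generalprinciple-A}), and one that compares $\widetilde M^n_i$ to $\textsf{W}\mu(U^n_i)$ (handled by the strong operator convergence $W_{\xi^n}\to W$) and then passes to the limit via \eqref{general-keyidentity} and Lemma \ref{le:Punif-continuity}. More precisely, first I would compute $\widetilde M^n_i = \frac1n\sum_{j\neq i} \xi^n_{ij}\, \E[\delta_{X^n_j}] = \frac1n\sum_{j=1}^n \xi^n_{ij}\, \nu^n_j$, where $\nu^n_j := \L(X^n_j) = n\int_{I^n_j}\mu_v\,dv$ is the state-marginal of player $j$; since $\xi^n_{ii}=0$ this is a genuine average. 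I would then show $\widetilde M^n_i$ is close to $\textsf{W}\mu(U^n_i) = \int_0^1 W(U^n_i,v)\mu_v\,dv$: testing against a bounded Lipschitz $\varphi$ on $E$, both are of the form $\int_0^1 K(U^n_i,v)\,\langle\mu_v,\varphi\rangle\,dv$ with $K = W_{\xi^n}$ resp.\ $W$ (using that $W_{\xi^n}(U^n_i,v)$ is constant $=\xi^n_{ij}$ on $v\in I^n_j$ and that $\langle\nu^n_j,\varphi\rangle = n\int_{I^n_j}\langle\mu_v,\varphi\rangle\,dv$), so the difference is controlled by $\|(\textsf{W}_{\xi^n}-\textsf{W})\psi\|_{L^1[0,1]}$ applied to the bounded function $\psi(v) = \langle\mu_v,\varphi\rangle$, which vanishes as $n\to\infty$ by the strong operator convergence hypothesis.

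For the fluctuation term, I would bound $\E\big[\|M^n_i - \widetilde M^n_i\|_{BL}^2\big]$. Since $M^n_i - \widetilde M^n_i = \frac1n\sum_{j\neq i}\xi^n_{ij}(\delta_{X^n_j} - \nu^n_j)$ is a sum of conditionally independent, mean-zero, uniformly bounded (in total variation by $1$) signed measures, testing against any $\varphi$ with $|\varphi|\le 1$ gives variance at most $\frac1{n^2}\sum_{j=1}^n (\xi^n_{ij})^2$; a standard chaining/separability argument over a countable dense family of bounded-Lipschitz test functions upgrades this to a bound on $\E\|M^n_i-\widetilde M^n_i\|_{BL}^2$ of the same order (up to a harmless additive term accounting for tightness/truncation of $E$, or more simply one reduces to scalar test functions as in \eqref{bounded-Lipschitz-norm}). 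Averaging over $i$ and invoking \ref{asmp:generalprinciple-A} gives $\frac1n\sum_{i=1}^n \E\big[\|M^n_i - \widetilde M^n_i\|_{BL}\big] \le \big(\frac1{n^3}\sum_{i,j}(\xi^n_{ij})^2\big)^{1/2} \to 0$.

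Combining: since $h(u,x,\cdot)$ is bounded and continuous on $\M_+(E)$, and $\|M^n_i - \textsf{W}\mu(U^n_i)\|_{BL}\to 0$ in the averaged sense above (in $L^1$ over $i$, in probability/expectation), I would conclude $\frac1n\sum_i \E[h(U^n_i,X^n_i,M^n_i)] - \frac1n\sum_i \E[h(U^n_i,X^n_i,\textsf{W}\mu(U^n_i))]\to 0$; here one must be slightly careful because continuity in $m$ is only pointwise, not uniform, so I would argue via: uniform boundedness plus $\|M^n_i - \textsf{W}\mu(U^n_i)\|_{BL}\to 0$ in the averaged sense implies, after passing to subsequences, a.s.\ convergence of a randomized version, hence the averaged difference of $h$ values tends to $0$ by dominated convergence (a clean way is to note the quantity $\frac1n\sum_i\E[\,|h(U^n_i,X^n_i,M^n_i) - h(U^n_i,X^n_i,\textsf{W}\mu(U^n_i))|\,]$ equals $\E$ of a bounded functional of a size-biased index that converges in probability). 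Finally, applying the key identity \eqref{general-keyidentity} with the bounded measurable test function $(u,x)\mapsto h(u,x,\textsf{W}\mu(u))$ — which is continuous in $x$ for a.e.\ $u$ by Lemma \ref{le:Wcontinuity}(1) together with continuity of $h(u,x,\cdot)$, the precise hypothesis under which \eqref{general-keyidentity} and Lemma \ref{le:Punif-continuity} cooperate — yields $\frac1n\sum_i \E[h(U^n_i,X^n_i,\textsf{W}\mu(U^n_i))] = \E[h(U,X,\textsf{W}\mu(U))]$ exactly, for every $n$, which closes the argument. The main obstacle is the second step handled with care: reconciling the merely pointwise continuity of $h(u,x,\cdot)$ in the measure argument with the averaged (not uniform) convergence of $M^n_i$ to $\textsf{W}\mu(U^n_i)$; the variance bound and the strong-operator estimate are routine by comparison.
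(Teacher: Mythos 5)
Your proposal follows essentially the same route as the paper's proof — the same bias/fluctuation decomposition of $M^n_i$, the same variance bound via \eqref{asmp:generalprinciple-A}, the same $\|(\textsf{W}_{\xi^n}-\textsf{W})\psi\|_{L^1[0,1]}\to 0$ estimate for the bias term, and the same use of the common-marginal lemma (Lemma \ref{le:Punif-continuity}) together with the exact identity \eqref{general-keyidentity} — though the paper organizes it more cleanly by first reducing to Lipschitz $h$ and then rewriting $\frac1n\sum_i\E[h(U^n_i,X^n_i,M^n_i)]=\E[h(U,X,\textsf{W}_{\xi^n}\mu_n(U))]$ exactly, after which bounded convergence handles the pointwise-only continuity and avoids the BL-norm chaining, truncation, and size-biasing you invoke. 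One small inaccuracy worth flagging: your parenthetical claim that $(u,x)\mapsto h(u,x,\textsf{W}\mu(u))$ is continuous in $x$ is not implied by the hypotheses (the theorem assumes continuity only in the measure argument, not in $x$), but this is harmless since \eqref{general-keyidentity} is an exact identity valid for any bounded measurable test function.
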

\begin{proof}

We will use several times the following fact: There exists $\overline{A} > 0$ such that
\begin{align}
\frac{1}{n^2}\sum_{i,j=1}^n \xi^n_{ij} = \|W_{\xi^n}\|_{L^1[0,1]^2} \le \overline{A}, \quad \forall n \in \N. \label{pf:gen-cont-1}
\end{align}
To see this, note that the convergence in strong operator topology $W_{\xi^n} \to W$ implies 
\begin{align*}
\big|\|\textsf{W}_{\xi^n}\bm{1}\|_{L^1[0,1]} - \|\textsf{W}\bm{1}\|_{L^1[0,1]}\big| \le \|(\textsf{W}_{\xi^n}-\textsf{W})\bm{1}\|_{L^1[0,1]} \to 0,
\end{align*} 
where $\bm{1}$ is the constant function equal to 1.
Since $W$ and $W_{\xi^n}$ are nonnegative, we have $\|\textsf{W} \bm{1}\|_{L^1[0,1]} = \|W \|_{L^1[0,1]^2}$ and $\|\textsf{W}_{\xi^n}\bm{1}\|_{L^1[0,1]} = \|W_{\xi^n}\|_{L^1[0,1]^2} = \frac{1}{n^2}\sum_{i,j=1}^n \xi^n_{ij}$.

The proof proceeds by a series of simplifications.

{\ }

\noindent\textbf{Step 1.} We first argue that it suffices to prove \eqref{goal1} for $h$ bounded and 1-Lipschitz. Indeed, suppose this is the case. Define the following probability measures on $[0,1] \times E \times \M_+(E)$:
\begin{align*}
Q_n := \frac{1}{n}\sum_{i=1}^n \L(U^n_i,X^n_i,M^n_i), \qquad Q := \L(U,X,M(U)).
\end{align*}
We have assumed that $\langle Q_n,h\rangle \to \langle Q,h\rangle$ holds for bounded Lipschitz $h$. 
By the Portmanteau theorem, it also holds for bounded continuous $h$, and in particular we have $Q_n \to Q$ weakly.  The $[0,1] \times E$-marginals of $Q_n$ are all the same, i.e.,  $\frac{1}{n}\sum_{i=1}^n\L(U^n_i,X^n_i)=\L(U,X)=\mu$, for each $n$, as argued in \eqref{general-keyidentity}. Hence, the weak convergence $Q_n \to Q$ also implies the convergence $\langle Q_n,h\rangle \to \langle Q,h\rangle$ for test functions $h$ of the form in the statement of the theorem, with no continuity required in the first two arguments \cite[Lemma 2.1]{beiglbock2018denseness}.

{\ }

\noindent\textbf{Step 2.} We next claim that it suffices to show that
$\textsf{W}_{\xi^n}\mu_n(U) \to \textsf{W}\mu(U)$ in probability, where the random probability measure $\mu_n$ on $[0,1] \times E$ is defined by
\begin{align*}
\mu_n = \frac{1}{n}\sum_{i=1}^n \delta_{(U^n_i,X^n_i)}.
\end{align*}
Expanding the notation and applying the definition \eqref{def:Woperator-measure} of the operator $\textsf{W}_{\xi^n}$,
\begin{align*}
\textsf{W}_{\xi^n}\mu_n(u) = \frac{1}{n}\sum_{j=1}^n W_{\xi^n}(u,U^n_j) \delta_{X^n_j} = M^n_i, \quad \text{for } u \in I^n_i, \ i=1,\ldots,n.
\end{align*}
Recalling that $W_{\xi^n}(u,U^n_j)=W_{\xi^n}(U^n_i,U^n_j)$ for $u \in I^n_i$, we have
\begin{align*}
\frac{1}{n}\sum_{i=1}^n\E[ h(U^n_i,X^n_i,M^n_i)] &= \frac{1}{n}\sum_{i=1}^n \E\Big[ h\Big(U^n_i,X^n_i,\frac{1}{n}\sum_{j=1}^n W_{\xi^n}(U^n_i,U^n_j) \delta_{X^n_j}\Big) \Big] \\
	&=   \sum_{i=1}^n \int_{I^n_i} \E\Big[ h\Big(u,X^n_i,\frac{1}{n}\sum_{j=1}^n W_{\xi^n}(u,U^n_j) \delta_{X^n_j}\Big) \,\Big|\, U^n_i=u \Big]\,du,
\end{align*}
with the second step using independence of $(U^n_i,X^n_i)_{i=1}^n$ and the fact that $W_{\xi^n}(u,U^n_j)=W_{\xi^n}(U^n_i,U^n_i)=\xi^n_{ii}=0$ for $u \in I^n_i$. Since $\L(X^n_i\,|\,U^n_i=u)=\L(X\,|\,U=u)$ for $u \in I^n_i$, this simplifies to
\begin{align*}
\qquad\qquad &=   \sum_{i=1}^n \int_{I^n_i} \E\Big[ h\Big(u,X,\frac{1}{n}\sum_{j=1}^n W_{\xi^n}(u,U^n_j) \delta_{X^n_j}\Big) \,\Big|\, U=u \Big]\,du \\
	&= \int_0^1 \E[ h(u,X ,\textsf{W}_{\xi^n}\mu_n(u)) \,|\,U=u]\,du \\
	&= \E[ h(U,X,\textsf{W}_{\xi^n}\mu_n(U))].
\end{align*}
Here we used also the assumed independence of $(U,X)$ and $(U^n_i,X^n_i)_{i=1}^n$.
Hence, once we know that $\textsf{W}_{\xi^n}\mu_n(U) \to \textsf{W}\mu(U)$ in probability,
it follows from the bounded convergence theorem that \eqref{goal1} holds for bounded continuous $h$, which is sufficient by Step 1.

{\ }

\noindent\textbf{Step 3.} We finally prove that $\textsf{W}_{\xi^n}\mu_n(U) \to \textsf{W}\mu(U)$ in probability, which will complete the proof as explained in Step 2. Fix a bounded continuous function $\varphi : E \to [-1,1]$. 
Expanding the definition,
\begin{align*}
\langle \textsf{W}_{\xi^n}\mu_n(u),\varphi\rangle  &= \frac{1}{n} \sum_{j=1}^n W_{\xi^n}(u,U^n_j)\varphi(X^n_j).
\end{align*}
We must show that $\langle \textsf{W}_{\xi^n}\mu_n(U),\varphi\rangle \to \langle \textsf{W}\mu(U),\varphi\rangle$ in probability.

{\ }

\noindent\textbf{Step 3a.}
We first claim that $\langle \textsf{W}_{\xi^n}\mu_n(U),\varphi\rangle - \E[\langle \textsf{W}_{\xi^n}\mu_n(U),\varphi\rangle \,|\,U] \to 0$ in probability, and in fact in $L^2$. To see this, note for $u \in I^n_i$ that
\begin{align*}
\Var(\langle \textsf{W}_{\xi^n}\mu_n(U),\varphi\rangle \,|\, U=u) &= \Var \Big(\frac{1}{n} \sum_{j=1}^n \xi^n_{ij}\varphi(X^n_j)  \Big) \le \frac{1}{n^2} \sum_{j=1}^n (\xi^n_{ij})^2,
\end{align*} 
by independence of $(X^n_j)_{j=1}^n$. Hence,
\begin{align*}
\E\left[\left(\langle \textsf{W}_{\xi^n}\mu_n(U),\varphi\rangle - \E[\langle \textsf{W}_{\xi^n}\mu_n(U),\varphi\rangle \,|\,U]\right)^2\right] &= \E \, \Var(\langle \textsf{W}_{\xi^n}\mu_n(U),\varphi\rangle \,|\, U) \\
	&= \sum_{i=1}^n \int_{I^n_i} \Var(\langle \textsf{W}_{\xi^n}\mu_n(U),\varphi\rangle \,|\, U=u)\, du \\
     &\le \frac{1}{n^3} \sum_{i,j=1}^n (\xi^n_{ij})^2,
\end{align*}
which vanishes by \eqref{asmp:generalprinciple-A}.

{\ }

\noindent\textbf{Step 3b.}
We must finally show that $\E[\langle \textsf{W}_{\xi^n}\mu_n(U),\varphi\rangle \,|\,U] \to \langle \textsf{W}\mu(U),\varphi\rangle$ in probability. To see this, we first use again the independence of $(U^n_i,X^n_i)_{i=1}^n$ to rewrite 
\begin{align*}
\E[\langle \textsf{W}_{\xi^n}\mu_n(U),\varphi\rangle \,|\,U=u] &= \E\Big[\frac{1}{n}\sum_{i=1}^nW_{\xi^n}(u,U^n_i)\varphi(X^n_i)\Big] = \E[ W_{\xi^n}(u,U)\varphi(X)] \\
	&= \int_0^1 W_{\xi^n}(u,v)\psi(v)\,dv,
\end{align*}
where $\psi(v):= \E[\varphi(X)\,|\,U=v]$, and where we again used the fact that $\frac{1}{n}\sum_{i=1}^n\L(U^n_i,X^n_i)=\L(U,X)$ as shown by \eqref{general-keyidentity}. Similarly, we may write
\begin{align*}
\langle \textsf{W}\mu(u),\varphi\rangle &= \E[W(u,U)\varphi(X)] = \E[W(u,U)\psi(U)] = \int_0^1 W(u,v)\psi(v)\,dv.
\end{align*}
These identities are to be understood for a.e.\ $u \in [0,1]$, and combined they yield
\begin{align}
\E\big[\big|\E[\langle \textsf{W}_{\xi^n}\mu_n(U),\varphi\rangle \,|\,U] - \langle \textsf{W}\mu(u),\varphi\rangle \big|\big] &= \int_0^1 \bigg|\int_0^1 \big(W_{\xi^n}(u,v) - W(u,v)\big)\psi(v)\,dv\bigg|\,du \nonumber \\
	&= \|(\textsf{W}_{\xi^n}-\textsf{W})\psi\|_{L^1[0,1]}, \label{pf:general1}
\end{align}
where we have used the operator notation of \eqref{def:Woperator-function}.

Recalling that $\varphi$ and thus $\psi$ are bounded, the right-hand side of \eqref{pf:general1} converges to zero by the assumption that $W_{\xi^n}\to W$ in the strong operator topology. 
We deduce that $\E[\langle \textsf{W}_{\xi^n}\mu_n(U),\varphi\rangle \,|\,U] \to \langle \textsf{W}\mu(U),\varphi\rangle$ in $L^1$ and thus in probability. 
This completes the proof of Step 3b, and thus the theorem.
\end{proof}

\subsection{Continuous kernels} \label{se:emp-contW}
We now prove an alternative to Theorem \ref{th:generalprinciple} which requires stronger assumptions but is, in a sense, uniform in the choice of labels, rather than averaged. Fix again $\mu\in\Punif([0,1] \times E)$, and assume there exists a version of the disintegration $\mu(du,dx)=du\mu_u(dx)$ such that $[0,1] \ni u \mapsto \mu_u \in \P(E)$ is weakly continuous. For $u \in [0,1]$, let $X_u$ denote a random variable with law $\mu_u$.
Let us write $\bm{u}=(u_1,\ldots,u_n)$ for a generic element of  $I^n_1 \times \cdots \times I^n_n$, which we think of as denoting the set of admissible assignments of \emph{labels} to each player $i \in [n]$. For $\bm{u} \in I^n_1 \times \cdots \times I^n_n$ define the (random) empirical measures
\begin{align}
\Mui := \frac{1}{n}\sum_{j=1}^n \xi^n_{ij} \delta_{X_{u_j}} = \frac{1}{n}\sum_{j=1}^n W_{\xi^n}(u_i,u_j) \delta_{X_{u_j}}, \label{def:Mni-cont}
\end{align}
where $(X_{u_i})_{i=1}^n$ are assumed independent.
Let us stress that \eqref{def:Mni-cont} and every other expression below will involve at most finitely many of the random variables  $(X_u)_{u \in [0,1]}$ at a time; at no point must we face any of the complications that accompany a continuum of independent random variables.

Recall below the bounded Lipschitz norm $\|\cdot\|_{BL}$ defined in \eqref{bounded-Lipschitz-norm}.

\begin{theorem} \label{th:generalprinciple-cont}
Assume $W_{\xi^n}$ converges to $W$ in the strong operator topology, and assume that \eqref{asmp:generalprinciple-A} holds.
Assume also that $[0,1] \ni u \mapsto W(u, v)dv \in \M_+([0,1])$ is continuous, and that there exists a version of the disintegration $\mu(du,dx)=du\mu_u(dx)$ such that the map $[0,1] \ni u \mapsto \mu_u \in \P(E)$ is weakly continuous.

Then 
\begin{align}
\lim_{n \to \infty}\sup_{\bm{u} = (u_1, ..., u_n) \in I^n_1 \times ... \times I^n_n} \frac{1}{n} \sum_{i=1}^n \E \|  \Mui - \text{\rm\textsf{W}}\mu(u_i) \|_{BL} = 0. \label{goal1-cont}
\end{align}
Let $h : [0,1] \times \M_+(E) \to \R$ be bounded and measurable, and assume $h(u,\cdot)$ continuous on $\M_+(E)$ uniformly in $u \in [0,1]$, in the sense that
\begin{align*}
\lim_{m' \to m}\sup_{u \in [0,1]}|h(u,m')-h(u,m)| = 0, \quad \forall m \in \M_+(E).
\end{align*}
Then we have
\begin{align}
\lim_{n \to \infty}\sup_{\bm{u} = (u_1, ..., u_n) \in I^n_1 \times ... \times I^n_n} \frac{1}{n} \sum_{i=1}^n \E|h(u_i, \Mui) - h(u_i, \text{\rm\textsf{W}}\mu(u_i))| = 0. \label{def:hform-cont}
\end{align}
\end{theorem}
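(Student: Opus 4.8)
The plan is to prove \eqref{goal1-cont} first and then derive \eqref{def:hform-cont} as a fairly direct consequence. For \eqref{goal1-cont}, the natural route is to split $\|\Mui - \text{\rm\textsf{W}}\mu(u_i)\|_{BL}$ into a ``fluctuation'' piece and a ``bias'' piece, exactly as in Steps 3a--3b of the proof of Theorem \ref{th:generalprinciple}, but now tracking everything uniformly over $\bm{u}\in I^n_1\times\cdots\times I^n_n$. Write $\overline M^{n,\bm u}_i := \frac1n\sum_{j=1}^n \xi^n_{ij}\,\mu_{u_j}\in\M_+(E)$ for the conditional mean of $\Mui$ given nothing (since the $X_{u_j}$ are independent with laws $\mu_{u_j}$). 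The triangle inequality gives
\begin{align*}
\|\Mui - \text{\rm\textsf{W}}\mu(u_i)\|_{BL} \le \|\Mui - \overline M^{n,\bm u}_i\|_{BL} + \|\overline M^{n,\bm u}_i - \text{\rm\textsf{W}}\mu(u_i)\|_{BL}.
\end{align*}
For the first (fluctuation) term, I would dualize the $BL$-norm against $1$-Lipschitz $\varphi$ with $|\varphi|\le 1$, use a separability/discretization argument to reduce the supremum over $\varphi$ to a countable dense family (or invoke that $\M_+(E)$ restricted to masses bounded by $\overline A$ is metrizable and the $BL$-norm is attained on a fixed countable set up to arbitrary precision), and bound $\E\|\Mui-\overline M^{n,\bm u}_i\|_{BL}^2$ via the variance estimate $\Var(\frac1n\sum_j\xi^n_{ij}\varphi(X_{u_j}))\le \frac1{n^2}\sum_j(\xi^n_{ij})^2$; averaging over $i$ yields $\le \frac1{n^3}\sum_{i,j}(\xi^n_{ij})^2\to 0$ by \eqref{asmp:generalprinciple-A}, and crucially this bound does not depend on $\bm u$. (A cleaner variant: bound $\E\|\cdot\|_{BL}$ directly by $\E\|\cdot\|_{BL}\le (\E\|\cdot\|_{BL}^2)^{1/2}$ after controlling the expected $BL$-norm of an empirical fluctuation by the second moment, using that the total masses are bounded by $\overline A$ from \eqref{pf:gen-cont-1}.)

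\textbf{The bias term.} This is where the continuity hypotheses enter and, I expect, the main obstacle. For fixed $\varphi$ with $\|\varphi\|_{BL}\le 1$, writing $\psi(v):=\langle\mu_v,\varphi\rangle=\E\varphi(X_v)$, we have $\langle\overline M^{n,\bm u}_i,\varphi\rangle = \frac1n\sum_j W_{\xi^n}(u_i,u_j)\psi(u_j)$ and $\langle\text{\rm\textsf{W}}\mu(u_i),\varphi\rangle = \int_0^1 W(u_i,v)\psi(v)\,dv$. I would compare both to $\int_0^1 W_{\xi^n}(u_i,v)\psi(v)\,dv$. The difference between this and $\langle\overline M^{n,\bm u}_i,\varphi\rangle$ is $\sum_j\int_{I^n_j}(\psi(u_j)-\psi(v))W_{\xi^n}(u_i,v)\,dv$ in absolute value $\le \sup_{|v-v'|\le 1/n}|\psi(v)-\psi(v')|\cdot\frac1n\sum_j\xi^n_{ij}$; since $v\mapsto\mu_v$ is weakly continuous on the compact $[0,1]$ it is \emph{uniformly} weakly continuous, so the modulus $\omega_\psi(1/n):=\sup_{|v-v'|\le1/n}|\psi(v)-\psi(v')|\to0$, and $\frac1n\sum_j\xi^n_{ij}$ is bounded after averaging in $i$ by $\overline A$. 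The remaining difference $\int_0^1(W_{\xi^n}(u_i,v)-W(u_i,v))\psi(v)\,dv$ averaged over $i$ is $\le\|(\text{\rm\textsf{W}}_{\xi^n}-\text{\rm\textsf{W}})\psi\|_{L^1[0,1]}\to0$ by strong-operator convergence — but only after averaging, which is exactly what we want since \eqref{goal1-cont} has a $\frac1n\sum_i$ in front. The genuine subtlety is handling the supremum over $\varphi$ uniformly: I would fix a compact $K\subset E$ carrying all but $\epsilon$ of the relevant masses (tightness of $\{\mu_u:u\in[0,1]\}$, which follows from weak continuity on compact $[0,1]$), restrict attention to $\varphi$ supported near $K$, and use that the class of $1$-Lipschitz functions bounded by $1$ on a compact set is itself compact in sup-norm (Arzel\`a--Ascoli), so a finite $\epsilon$-net suffices and the $L^1$-operator convergence applies to each net element.

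\textbf{Deriving \eqref{def:hform-cont}.} Given the uniform modulus-of-continuity hypothesis on $h(u,\cdot)$, for any $\eta>0$ there is $\delta>0$ with $\sup_u|h(u,m')-h(u,m)|<\eta$ whenever $\|m'-m\|_{BL}<\delta$ (after noting $m=\text{\rm\textsf{W}}\mu(u_i)$ ranges over a $BL$-bounded, hence within a uniformly small modulus — one should state the uniform-continuity hypothesis as giving a single modulus valid on the bounded set where the masses live). Then $\E|h(u_i,\Mui)-h(u_i,\text{\rm\textsf{W}}\mu(u_i))| \le \eta + 2\|h\|_\infty\,\PP(\|\Mui-\text{\rm\textsf{W}}\mu(u_i)\|_{BL}\ge\delta) \le \eta + \frac{2\|h\|_\infty}{\delta}\E\|\Mui-\text{\rm\textsf{W}}\mu(u_i)\|_{BL}$ by Markov; averaging over $i$, taking $\limsup_n$ via \eqref{goal1-cont}, then $\eta\downarrow0$, gives \eqref{def:hform-cont}. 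The only care needed is that the $\delta$ from the uniform continuity of $h$ can be chosen uniformly over the (bounded) set of measures $\text{\rm\textsf{W}}\mu(u_i)$ as $u_i$ ranges over $[0,1]$, which is built into the stated hypothesis once one restricts to the compact set of masses $\le\overline A$; I would spell this out explicitly. I expect the estimates for the fluctuation and bias terms to be routine once set up, with the packaging of the uniform-in-$\varphi$ and uniform-in-$\bm u$ controls being the one place requiring genuine care.
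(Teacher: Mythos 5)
Your plan follows essentially the same architecture as the paper's proof: a fluctuation/bias split, a per-test-function variance estimate controlled by \eqref{asmp:generalprinciple-A}, a bias analysis using uniform continuity of $\psi(v)=\langle\mu_v,\varphi\rangle$ together with strong-operator convergence, a tightness-plus-Arzel\`a--Ascoli step to pass from fixed $\varphi$ to the $BL$-norm, and a modulus-of-continuity/Markov argument to go from \eqref{goal1-cont} to \eqref{def:hform-cont}. However, there is one genuine gap in the bias estimate.

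You claim that
\[
\frac{1}{n}\sum_{i=1}^n\left|\int_0^1\big(W_{\xi^n}(u_i,v)-W(u_i,v)\big)\psi(v)\,dv\right|\;\le\;\|(\textsf{W}_{\xi^n}-\textsf{W})\psi\|_{L^1[0,1]}.
\]
This is not correct as written: the left-hand side is a Riemann sum at the particular points $u_i\in I^n_i$, while the right-hand side is the integral over $u$. The two coincide for the $W_{\xi^n}$ part, because $W_{\xi^n}(\cdot,v)$ is constant on $I^n_i$, so $\frac{1}{n}\int_0^1 W_{\xi^n}(u_i,v)\psi(v)\,dv=\int_{I^n_i}\int_0^1 W_{\xi^n}(u,v)\psi(v)\,dv\,du$. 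But for the $W$ part, $W(u_i,v)$ need not agree with its average over $I^n_i$, and indeed for a general $L^1$ kernel the values on a Lebesgue-null set of labels $\bm u$ carry no information at all. What is missing is a third intermediate term comparing $\int_0^1 W(u_i,v)\psi(v)\,dv$ with $n\int_{I^n_i}\int_0^1 W(u,v)\psi(v)\,dv\,du$; this error vanishes uniformly in $\bm u$ precisely because of the hypothesis that $u\mapsto W(u,v)dv$ is continuous (so that $u\mapsto\int_0^1 W(u,v)\psi(v)dv$ is uniformly continuous, using also continuity of $\psi$). This is where that hypothesis is essential for \eqref{goal1-cont}, beyond its use in tightness, and your two-term decomposition of the bias silently skips it. The fix is mechanical once identified.

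Two smaller points. First, your treatment of the fluctuation term proposes to bound $\E\|\Mui-\overline M^{n,\bm u}_i\|_{BL}^2$ by the single-$\varphi$ variance bound, which does not follow directly because the $BL$-norm is a supremum inside the expectation; your parenthetical ``cleaner variant'' has the same issue. You do mention the correct repair (restrict to a compact $K$ and take a finite $\varepsilon$-net of $1$-Lipschitz functions on $K$), but you should make clear that the finite net has to come first, and then the per-$\varphi$ bounds are summed over the net, as in the paper's Step 3. Second, your concern that the stated uniform-in-$u$ continuity of $h$ must be promoted to a single $\delta$ valid for all $u_i$ is legitimate; it is resolved by noting $\{\textsf{W}\mu(u):u\in[0,1]\}$ is compact in $\M_+(E)$ (this follows from the two continuity hypotheses, cf.\ Lemma \ref{le:Wcontinuity}(3)), so pointwise continuity at each $m$ upgrades to a uniform modulus on that compact set. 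Your Markov-inequality derivation of \eqref{def:hform-cont} is then fine.
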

\begin{proof}
The claim \eqref{def:hform-cont} follows immediately from \eqref{goal1-cont} and the assumed uniform continuity of $h$.
As in the proof of Theorem \ref{th:generalprinciple}, the convergence in cut norm $W_{\xi^n} \to W$ yields $\overline{A} > 0$ such that \eqref{pf:gen-cont-1} holds.

{\ } 

\noindent\textbf{Step 1.} We first prove that 
\begin{align}
\lim_{n \to \infty}  \sup_{\bm{u} = (u_1, ..., u_n) \in I^n_1 \times ... \times I^n_n}  \frac{1}{n} \sum_{i=1}^n \E | \langle \Mui - \textsf{W}\mu(u_i), \varphi \rangle | = 0, \label{goal2-cont}
\end{align}
for each Lipschitz function $\varphi : E \to [-1,1]$.
Note first for each $ i \in [n]$ and $\bm{u} \in I^n_1 \times ... \times I^n_n$ that
\begin{equation*}
\E | \langle \Mui - \textsf{W}\mu(u_i), \varphi \rangle | \le \E | \langle \Mui - \E \Mui, \varphi \rangle | + | \langle \E \Mui - \textsf{W}\mu(u_i), \varphi \rangle |.
\end{equation*}
For the first term, note that
\begin{align*}
\big(\E | \langle \Mui - \E \Mui, \varphi \rangle |\big)^2 &\le \Var( \langle \Mui , \varphi \rangle ) = \Var\bigg(\frac{1}{n}\sum_{j=1}^n \xi^n_{ij} \varphi(X_{u_j})\bigg)  \le \frac{1}{n^2}\sum_{j=1}^n(\xi^n_{ij})^2.
\end{align*}
Using the assumption \eqref{asmp:generalprinciple-A}, we deduce
\begin{align*}
\sup_{\bm{u} \in I^n_1 \times ... \times I^n_n}\frac{1}{n}\sum_{i=1}^n \E | \langle \Mui - \E \Mui, \varphi \rangle | \le \bigg(\frac{1}{n^3}\sum_{i,j=1}^n(\xi^n_{ij})^2\bigg)^{1/2} \to 0,
\end{align*}
and thus \eqref{goal2-cont} will follow if we show that
\begin{align}
\lim_{n\to\infty} \sup_{\bm{u} \in I^n_1 \times ... \times I^n_n}\frac{1}{n}\sum_{i=1}^n | \langle \E \Mui - \textsf{W}\mu(u_i), \varphi \rangle | = 0. \label{pf:gen-cont-2}
\end{align}
Fix $i \in [n]$ and  $\bm{u} \in I^n_1 \times ... \times I^n_n$ for now.
Using $\L(X_{u_j}) = \mu_{u_j}$ and the fact that $\xi^n_{ij} = W_{\xi^n}(u_i, u_j) = n \int_{I^n_j} W_{\xi^n}(u_i, v) dv$, we have on the one hand
\begin{align*}
\E\langle \Mui,\varphi\rangle &= \frac{1}{n}\sum_{j=1}^n\xi^n_{ij}\E[\varphi(X_{u_j})] = \sum_{j=1}^n \int_{I^n_j} W_{\xi^n}(u_i,v)\langle \mu_{u_j},\varphi\rangle \, dv.
\end{align*}
On the other hand,
\begin{align*}
\langle \textsf{W}\mu(u_i),\varphi\rangle &=  \int_{[0,1] \times E} W(u_i,v)\varphi(x) \mu(dv,dx) = \int_0^1 W(u_i,v)\langle \mu_v,\varphi\rangle \, dv.
\end{align*}

Hence, to prove \eqref{pf:gen-cont-2}, we must show equivalently that
\begin{equation}\label{eq:avg_graphon_det_u}
\lim_{n\to\infty} \sup_{\bm{u} \in I^n_1 \times ... \times I^n_n}\frac{1}{n} \sum_{i=1}^n \bigg| \sum_{j=1}^n \int_{I^n_j} W_{\xi^n}(u_i,v)\langle \mu_{u_j},\varphi\rangle\,dv - \int_0^1 W(u_i,v)\langle \mu_v,\varphi\rangle \, dv \bigg| = 0.
\end{equation}
To prove this, we split the difference into three terms:
\begin{equation}\label{eq:diff_Wn_mu_u^n_j-W_mu_v}
\begin{split}
   \frac{1}{n} &\sum_{i=1}^n \bigg| \sum_{j=1}^n \int_{I^n_j}  W_{\xi^n}(u_i,v)\langle \mu_{u_j},\varphi\rangle \, dv - \int_0^1 W(u_i,v)\langle \mu_v,\varphi\rangle \, dv \bigg| \\
    &\hspace{.5cm} \le \frac{1}{n} \sum_{i=1}^n \bigg| \sum_{j=1}^n \int_{I^n_j} \big(W_{\xi^n}(u_i,v)\langle \mu_{u_j},\varphi\rangle - W_{\xi^n}(u_i,v)\langle \mu_v,\varphi\rangle\big)\, dv \bigg|\\
    & \hspace{1cm} + \frac{1}{n} \sum_{i=1}^n \bigg|\int_0^1 W_{\xi^n}(u_i,v)\langle \mu_v,\varphi\rangle dv - n \int_{I^n_i} \int_0^1 W(u,v) \langle \mu_v,\varphi\rangle dv du \bigg| \\
    & \hspace{1cm} + \frac{1}{n} \sum_{i=1}^n \bigg|n \int_{I^n_i} \int_0^1 W(u,v) \langle \mu_v,\varphi\rangle dv du - \int_0^1 W(u_i,v)\langle \mu_v,\varphi\rangle dv \bigg|
\end{split}
\end{equation}
By definition of the step graphon $W_{\xi^n}$, the first term is equal to 
\begin{align}
    \frac{1}{n} \sum_{i=1}^n \left| \sum_{j=1}^n \xi^n_{ij} \int_{I^n_j} \langle \mu_{u_j}- \mu_v,\varphi\rangle \, dv \right| &\le \frac{1}{n} \sum_{i,j=1}^n \xi^n_{ij} \int_{I^n_j} \left|\langle \mu_{u_j},\varphi\rangle - \langle \mu_v,\varphi\rangle\right|\, dv. \label{pf:gen-cont-term1}
\end{align}
We deduce from the assumption of weak continuity of $u \mapsto \mu_u$ that $[0,1] \ni u \mapsto \langle \mu_u, \varphi \rangle \in \R$ is uniformly continuous. For a given $\epsilon >0$, we can therefore choose $n$ large enough so that  $|\langle \mu_u - \mu_v, \varphi \rangle | \le \epsilon$ whenever $|u-v| \le 1/n$. Hence, for large enough $n$ not depending on the choice of $\bm{u}$ we find that the right-hand side of \eqref{pf:gen-cont-term1} is bounded by $\overline{A}\epsilon$.

Having dealt with the first term in \eqref{eq:diff_Wn_mu_u^n_j-W_mu_v}, let us turn to the second. Using the fact that $ W_{\xi^n}(u_i, v) = n \int_{I^n_i} W_{\xi^n}(u, v) du$, we can rewrite it as 
\begin{equation*}
\begin{split}
\sum_{i=1}^n &\bigg|\int_{I^n_i} \int_0^1 \!\!\big(W_{\xi^n}(u,v) - W(u,v)\big)\langle \mu_v,\varphi\rangle dv du \bigg|  \le  \int_0^1 \!\bigg| \!\int_0^1 \!\!\big(W_{\xi^n}(u,v) - W(u,v)\big)\langle \mu_v,\varphi\rangle dv \bigg|du .
\end{split}
\end{equation*}
Since $\varphi$ is bounded, the right-hand side (which we note does not depend on $\bm{u}^n$) converges to zero by the assumption that $W_{\xi^n}\to W$ in the strong operator topology.

Finally, the third term in \eqref{eq:diff_Wn_mu_u^n_j-W_mu_v} is equal to
\begin{equation}
    \sum_{i=1}^n \bigg| \int_{I^n_i} (\psi(u)- \psi(u_i))du \bigg|, \label{pf:gen-cont-term3}
\end{equation}
where we define $\psi(u) = \int_0^1 W(u,v)\langle \mu_v,\varphi\rangle dv$. Recall by assumption that $u \mapsto W(u,v)dv \in \M_+([0,1])$ is continuous. Since $v \mapsto \langle \mu_v,\varphi\rangle$ is continuous by assumption, we deduce that $\psi$ is continuous. Therefore, given $\epsilon > 0$, we may choose $n$ large enough so that $|\psi(u)- \psi(v)| \le \epsilon$ whenever $|u-v|\le 1/n$, and it follows that \eqref{pf:gen-cont-term3} is no more than $\epsilon$, regardless of the choice of $\bm{u} \in I^n_1 \times ... \times I^n_n$. This concludes the proof of \eqref{goal2-cont}.

{\ }

\noindent\textbf{Step 2.} We next show that the set of mean measures $\{\frac{1}{n}\sum_{i=1}^n\E \Mui : n \ge 1, \, \bm{u} \in I^n_1 \times \cdots \times I^n_n\} \subset \M_+(E)$ is tight. The mean measures are given by
\begin{align*}
\frac{1}{n}\sum_{i=1}^n\E \Mui  &= \frac{1}{n^2}\sum_{i,j=1}^n \xi^n_{ij}\L(X_{u_j}) = \frac{1}{n^2}\sum_{i,j=1}^n \xi^n_{ij}\mu_{u_j}.
\end{align*}
Since the map $u \mapsto \mu_u$ is continuous by assumption, the image $\{\mu_u : u \in [0,1]\} \subset \P(E)$ is compact and thus tight by Prokhorov's theorem. Hence, for $\epsilon > 0$, we may find $K \subset E$ compact such that $\mu_u(K^c) \le \epsilon$ for all $u \in [0,1]$. By \eqref{pf:gen-cont-1},  $\frac{1}{n}\sum_{i=1}^n\E \Mui(K^c) \le \overline{A}\epsilon$.

{\ }

\noindent\textbf{Step 3.} We now prove the claim \eqref{goal1-cont}. 
Let $\mathcal{S}$ denote the set of 1-Lipschitz functions $\varphi : E \to [-1,1]$, and let $\epsilon > 0$. 

By Lemma \ref{le:Wcontinuity}(3), the continuity assumptions on $W$ and the disintegration $\mu_u$ imply that the map $[0,1] \ni u \mapsto \textsf{W}\mu(u) \in \M_+(E)$ is continuous, and thus the set of  measures $\{\textsf{W}\mu(u): u \in [0,1]\} \subset \M_+(E)$ is tight.
This and Step 2 imply that there exists a compact set $K \subset E$ such that 
\begin{align}
\sup_{u \in [0,1]} \textsf{W}\mu(u)(K^c) + \sup_{n \in \N}\sup_{\bm{u} \in I^n_1 \times \cdots \times I^n_n} \frac{1}{n}\sum_{i=1}^n \E \Mui(K^c) \le \epsilon. \label{pf:gen-cont-11}
\end{align}
The set of 1-Lipschitz functions $K \to [-1,1]$ is compact in the uniform topology, by Arzel\`a-Ascoli. We may thus find a finite set $\mathcal{S}_\epsilon \subset \mathcal{S}$ such that $\min_{\psi \in \mathcal{S}_\epsilon}\|(\varphi-\psi)1_K\|_\infty \le \epsilon$ for every $\varphi \in \mathcal{S}$. Now, for any $\varphi,\psi \in \mathcal{S}$ and $u \in [0,1]$, we have
\begin{align*}
|\langle \Mui-\textsf{W}\mu(u),\varphi\rangle| &\le |\langle \Mui-\textsf{W}\mu(u),\psi\rangle| + |\langle \Mui-\textsf{W}\mu(u),(\varphi -\psi) 1_{K^c}\rangle| \\
	&\quad + |\langle \Mui-\textsf{W}\mu(u),(\varphi -\psi) 1_K\rangle|.
\end{align*}
To estimate the second and third terms, we argue that the total masses of the measures $\frac{1}{n}\sum_{i=1}^n\Mui$ and $\textsf{W}\mu(u)$ are bounded a.s.\ by some constant $C>0$. Indeed, $\frac{1}{n}\sum_{i=1}^n\Mui(E) \le \overline{A}$ a.s.\ by \eqref{pf:gen-cont-1}, and the mass $\textsf{W}\mu(u)(E)=\langle \textsf{W}\mu(u),1\rangle$ depends continuously on $u$ thanks to Lemma \ref{le:Wcontinuity}(3) and the assumed continuity of $W$. Hence,  for $\bm{u} \in I^n_1 \times \cdots \times I^n_n$,
\begin{align*}
\frac{1}{n}\sum_{i=1}^n\|\Mui &- \textsf{W}\mu(u_i)\|_{BL} = \frac{1}{n}\sum_{i=1}^n\sup_{\varphi \in \mathcal{S}}|\langle \Mui-\textsf{W}\mu(u_i),\varphi\rangle| \\
	&\le \frac{1}{n}\sum_{i=1}^n \Big[\max_{\psi \in \mathcal{S}_\epsilon} |\langle \Mui-\textsf{W}\mu(u_i),\psi\rangle| + 2\Mui(K^c) + 2\textsf{W}\mu(u_i)(K^c)\Big] + 2C\epsilon.
\end{align*}
Take expectations, recalling \eqref{pf:gen-cont-11}, and bound $\max_{\psi \in \mathcal{S}_\epsilon}$ by $\sum_{\psi \in \mathcal{S}_\epsilon}$ to get
\begin{align*}
\frac{1}{n}\sum_{i=1}^n\E\|\Mui - \textsf{W}\mu(u_i)\|_{BL} &\le \sum_{\psi \in \mathcal{S}_\epsilon} \frac{1}{n}\sum_{i=1}^n\E|\langle \Mui-\textsf{W}\mu(u_i),\psi\rangle| + 2(2+C)\epsilon,
\end{align*}
for all $i \in [n]$ and all $\bm{u} \in I^n_1 \times \cdots \times I^n_n$.
Send $n\to\infty$ followed by $\epsilon \to 0$ to deduce  \eqref{goal1-cont}.
\end{proof}

\begin{remark}
Theorem \ref{th:generalprinciple-cont} remains valid under a somewhat weaker convergence assumption than strong operator topology, namely that $\|(\textsf{W}_{\xi^n} - \textsf{W})\psi\|_{L^1[0,1]} \to 0$ for $\psi \in C[0,1]$, not necessarily for all $\psi\in L^\infty[0,1]$. This is, of course, what one would call the strong operator topology for the space of operators from $C[0,1] \to L^1[0,1]$. In fact, we do not really need the limit operator $\textsf{W}$ to be an integral operator; it could be something of the form $\textsf{W}\varphi(u)=\int_{[0,1]}\varphi(v)K_u(dv)$ for some measurable map $u \mapsto K_u \in \M_+(E)$ with $\int_0^1K_u(E)\,du < \infty$.
This is somewhat similar to the (more subtle) notion of \emph{extended graphons} used in the recent study \cite{jabin2021mean} of (non-game-theoretic) interacting diffusions, but we will not pursue this generality here.
\end{remark}

\subsection{Sampling kernels} \label{se:emp-sampling}

The mode of convergence can be further upgraded under the more specific choice of graphon adopted in Theorem \ref{th:approxEQ-sampling}. Rather than working with a generic matrix $\xi^n$ such that $W_{\xi^n} \to W$, let us now follow a canonical construction in graphon theory. In this section, let us define the empirical measure
\begin{align*}
N_i^{n,\bm{u}} = \frac{1}{n}\sum_{j=1,\,j \neq i}^n W(u_i,u_j) \delta_{X_{u_j}}, \quad \text{for } \bm{u}=(u_1,\ldots,u_n) \in [0,1]^n, \ n \in \N,
\end{align*}
where $X_u\sim \mu_u$ for each $u$ are independent as in Section \ref{se:emp-contW}.
In the following, equip $[0,1]^\infty$ with the infinite product measure $(\mathrm{Unif}[0,1])^\infty$.

\begin{theorem} \label{th:generalprinciple-sampling}
Assume $W : [0,1]^2 \to [0,\infty)$ is bounded and measurable. Assume $\{ \mu_u : u \in [0,1]\} \subset \P(E)$ is tight. Let $h : [0,1] \times \M_+(E) \to \R$ be bounded and measurable, and assume $h(u,\cdot)$ continuous on $\M_+(E)$ uniformly in $u \in [0,1]$, in the sense that
\begin{align*}
\lim_{m' \to m}\sup_{u \in [0,1]}|h(u,m')-h(u,m)| = 0, \quad \forall m \in \M_+(E).
\end{align*}
Then, for almost every choice of $(u_i)_{i \in \N} \in [0,1]^\infty$, the following holds:
\begin{align}
\lim_{n\to\infty} \max_{i \in [n]}\E\left| h(u_i,N_i^{n,(u_1,\ldots,u_n)}) - h(u_i,\text{\rm\textsf{W}}\mu(u_i))\right| = 0. \label{goal111}
\end{align}
\end{theorem}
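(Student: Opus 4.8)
\emph{Proof proposal.} The plan is to follow the skeleton of the proof of Theorem~\ref{th:generalprinciple-cont}, replacing the Riemann-sum step (which there relied on continuity of $u\mapsto\mu_u$) by a concentration estimate exploiting that here the labels are genuine i.i.d.\ samples, so only measurability of $v\mapsto\mu_v$ is used. I would first record the basic facts. Since $\textsf{W}\mu(u)=\int_0^1 W(u,v)\mu_v\,dv$, one has $\textsf{W}\mu(u)(E)\le\|W\|_\infty$, and likewise $N^{n,\bm u}_i(E)\le\|W\|_\infty$ and $\E[N^{n,\bm u}_i\mid(u_k)_k]=\frac1n\sum_{j\ne i}W(u_i,u_j)\mu_{u_j}$; tightness of $\{\mu_u:u\in[0,1]\}$ then gives tightness of $\{\textsf{W}\mu(u):u\in[0,1]\}$ and, for every $\epsilon>0$, a compact $K\subset E$ with $\sup_u\textsf{W}\mu(u)(K^c)\le\|W\|_\infty\epsilon$ and $\E N^{n,\bm u}_i(K^c)\le\|W\|_\infty\epsilon$ uniformly in $i,n,\bm u$. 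A routine compactness argument from the hypothesis on $h$ (suppose not, extract a convergent subsequence of the offending measures in the compact set $\mathcal K:=\overline{\{\textsf{W}\mu(u):u\in[0,1]\}}$, and use the pointwise uniform-in-$u$ continuity at the limit) shows that $h$ is uniformly continuous in its second variable near $\mathcal K$, uniformly in the first. Since $\textsf{W}\mu(u_i)\in\mathcal K$, Markov's inequality together with boundedness of $h$ reduces \eqref{goal111} to showing that, for a.e.\ $(u_k)_{k\in\N}$,
\begin{equation}
\lim_{n\to\infty}\max_{i\in[n]}\E\big\|N^{n,\bm u}_i-\textsf{W}\mu(u_i)\big\|_{BL}=0. \label{eq:star-sampling}
\end{equation}

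Next I would peel the $BL$-norm in \eqref{eq:star-sampling} down to a countable family of test functions, exactly as in Step~3 of the proof of Theorem~\ref{th:generalprinciple-cont}: using the tightness bounds above and Arzel\`a--Ascoli, for each $m\in\N$ one gets a finite set $\mathcal S_{1/m}$ of $1$-Lipschitz functions $E\to[-1,1]$ (chosen deterministically, depending only on $W$ and $\{\mu_u\}$) with
\[
\E\big\|N^{n,\bm u}_i-\textsf{W}\mu(u_i)\big\|_{BL}\le\sum_{\psi\in\mathcal S_{1/m}}\E\big|\langle N^{n,\bm u}_i-\textsf{W}\mu(u_i),\psi\rangle\big|+C\|W\|_\infty/m
\]
for all $i\in[n]$ and all $\bm u\in I^n_1\times\cdots\times I^n_n$, with $C$ absolute. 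Hence \eqref{eq:star-sampling} follows once one shows, for each fixed $1$-Lipschitz $\varphi:E\to[-1,1]$ in the (deterministic, countable) family $\bigcup_m\mathcal S_{1/m}$, that a.s.\ $\max_{i\in[n]}\E|\langle N^{n,\bm u}_i-\textsf{W}\mu(u_i),\varphi\rangle|\to0$; one then intersects the corresponding countably many full-measure events and sends $m\to\infty$.

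The core is therefore this single-test-function statement, which I expect to be the main obstacle --- not because any individual estimate is difficult, but because a second-moment bound does not suffice: one readily checks that $\E|\langle N^{n,\bm u}_i-\textsf{W}\mu(u_i),\varphi\rangle|^2=O(1/n)$ conditionally on the labels, so a union bound over the $n$ indices inside $\max_{i\in[n]}$ leaves $O(1)$, and genuine exponential concentration is needed. Set $g(v):=\langle\mu_v,\varphi\rangle\in[-1,1]$, which is merely bounded and measurable. Conditioning on the labels, split $\E|\langle N^{n,\bm u}_i-\textsf{W}\mu(u_i),\varphi\rangle|$ into a fluctuation part in the $X$'s, bounded by $\big(\Var(\langle N^{n,\bm u}_i,\varphi\rangle\mid\bm u)\big)^{1/2}\le\|W\|_\infty/\sqrt n$ uniformly in $i$ and $\bm u$ by independence of $(X_{u_j})_j$, and a label part
\[
R^n_i(\bm u):=\Big|\tfrac1n\textstyle\sum_{j\in[n],\,j\ne i}W(u_i,u_j)g(u_j)-\int_0^1 W(u_i,v)g(v)\,dv\Big|.
\]
Conditionally on $u_i$, the variables $(u_j)_{j\ne i}$ are i.i.d.\ $\mathrm{Unif}[0,1]$ (independent of $u_i$) and $\Phi_{u_i}(\cdot):=W(u_i,\cdot)g(\cdot)$ is a fixed function bounded by $\|W\|_\infty$ with mean $\int_0^1\Phi_{u_i}$; so Hoeffding's inequality gives $\PP(R^n_i(\bm u)>t\mid u_i)\le 2e^{-cnt^2}$ for all $n$ large (the $\tfrac1n$ versus $\tfrac1{n-1}$ discrepancy contributes only an $O(1/n)$ bias), with $c=c(\|W\|_\infty,t)>0$ and the bound uniform in $u_i$, hence unconditional. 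A union bound yields $\PP(\max_{i\in[n]}R^n_i(\bm u)>t)\le 2n\,e^{-cnt^2}$, which is summable in $n$, so Borel--Cantelli gives, for each $t>0$, that a.s.\ $\limsup_n\max_{i\in[n]}R^n_i(\bm u)\le t$; taking $t=1/m\downarrow0$ along a countable sequence gives $\max_{i\in[n]}R^n_i(\bm u)\to0$ a.s. Combining the fluctuation and label parts proves the single-test-function claim, and the reductions above then close the argument.
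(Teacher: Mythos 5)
Your proposal is correct and follows essentially the same route as the paper's proof: Hoeffding plus Borel--Cantelli for the label concentration, reduction to a countable family of test functions via tightness and Arzel\`a--Ascoli on a compact exhaustion, and finally the uniform-in-$u$ continuity of $h$. The only minor divergence is that you control the $X$-fluctuation with a uniform $L^2$ (variance) bound, whereas the paper applies Hoeffding there as well, obtaining the slightly stronger $\E\max_i$ rather than $\max_i\E$; either suffices for the stated conclusion.
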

\begin{proof}
By rescaling, we may assume that $0 \le W \le 1$ and $0 \le h \le 1$. Let $(u_i)_{i \in \N}$ be arbitrary for now. Let $\varphi : E \to [0,1]$ be measurable, and set $\psi(u)=\E[\varphi(X_u)]$. By the union bound and Hoeffding's inequality,
\begin{align*}
\PP\bigg(\max_{i \in [n]}\bigg|\frac{1}{n}\sum_{j=1,\,j \neq i}^n W(u_i,u_j) \varphi(X_{u_j}) - \frac{1}{n}\sum_{j=1,\,j \neq i}^n W(u_i,u_j) \psi(u_j)\bigg| > \delta\bigg) &\le n e^{-2n\delta^2},
\end{align*}
for each $n \in \N$ and $\delta > 0$. By Borel-Cantelli, we deduce
\begin{align}
\max_{i \in [n]}\bigg|\frac{1}{n}\sum_{j=1,\,j \neq i}^n W(u_i,u_j) \varphi(X_{u_j}) - \frac{1}{n}\sum_{j=1,\,j \neq i}^n W(u_i,u_j) \psi(u_j)\bigg| \to 0, \ \ a.s. \label{pf:a.s.1}
\end{align}

Next, let $U_i \sim \mathrm{Unif}[0,1]$ for $i \in \N$ be i.i.d. Again using Hoeffding's inequality, we find
\begin{align*}
\PP\bigg( \bigg|\frac{1}{n}\sum_{j=1,\,j \neq i}^n W(U_i,U_j) \psi(U_j) - \frac{1}{n}\sum_{j=1,\,j \neq i}^n \E[W(U_i,U_j) \psi(U_j)\,|\,U_i]\bigg| > \delta \,\bigg|\, U_i\bigg) &\le  e^{-2n\delta^2},
\end{align*}
for each $i$, a.s. Note that $\frac{1}{n}\sum_{j=1,\,j \neq i}^n \E[W(u,U_j) \psi(U_j)] = \frac{n-1}{n}\E[W(u,U)\psi(U)]= \frac{n-1}{n}\langle \textsf{W}\mu(u),\varphi\rangle$.
Hence, for $n$ large enough that $1/n \le \delta$, we get
\begin{align*}
\PP\bigg( \bigg|\frac{1}{n}\sum_{j=1,\,j \neq i}^n W(U_i,U_j) \psi(U_j) - \langle \textsf{W}\mu(U_i),\varphi\rangle \bigg| > 2\delta \,\bigg|\, U_i\bigg) &\le  e^{-2n\delta^2}.
\end{align*}
Using a union bound and the tower property,
\begin{align*}
\PP\bigg(\max_{i \in [n]}\bigg|\frac{1}{n}\sum_{j=1,\,j \neq i}^n W(U_i,U_j) \psi(U_j) - \langle \textsf{W}\mu(U_i),\varphi\rangle\bigg| > 2\delta \bigg) &\le n e^{-2n\delta^2},
\end{align*}
again for $n \ge 1/\delta$. Deduce from Borel-Cantelli that
\begin{align}
\max_{i \in [n]}\bigg|\frac{1}{n}\sum_{j=1,\,j \neq i}^n W(U_i,U_j) \psi(U_j) - \langle \textsf{W}\mu(U_i),\varphi\rangle \bigg| \to 0, \ \ a.s. \label{pf:a.s.2}
\end{align}
Combine \eqref{pf:a.s.1} and \eqref{pf:a.s.2} to get, for instance,
\begin{align*}
\E\max_{i \in [n]}\bigg|\frac{1}{n}\sum_{j=1,\,j \neq i}^n W(u_i,u_j) \varphi(X_{u_j}) - \langle \textsf{W}\mu(u_i),\varphi\rangle\bigg| \to 0,
\end{align*}
for a.e.\ choice of $(u_i)_{i \in \N}$. Since we assumed $\{ \mu_u : u \in [0,1]\}$ to be tight, it follows easily from boundedness of $W$ that $\{ \textsf{W}\mu(u) : u \in [0,1]\} \subset \M_+(E)$ is also tight, and so is $\{\E N_i^{n,\bm{u}} : n \in \N, i \in [n], \bm{u} \in [0,1]^n\}$. The latter implies that $\{\L(N_i^{n,\bm{u}}) : n \in \N, i \in [n], \bm{u} \in [0,1]^n\} \subset \P(\M_+(E))$ is tight, by a well known argument \cite[Fact (2.5)]{sznitman1991topics} which works not only for probability measures but also for nonnegative measures of uniformly bounded total mass. We may then argue as in Step 3 of the proof of Theorem \ref{th:generalprinciple-cont} that
\begin{align*}
\E\max_{i \in [n]}\|N_i^{n,(u_1,\ldots,u_n)} -  \textsf{W}\mu(u_i) \|_{BL} \to 0,  \label{pf:a.s.3}
\end{align*}
for a.e.\ choice of $(u_i)_{i \in \N}$. We now easily deduce \eqref{goal111} using the uniform continuity assumption on $h$.
\end{proof}

\section{Approximate equilibria} \label{se:approxEQ}

In this section, we will prove the results of Section \ref{subse:main_res_approx_eq}. Recall that $\alpha^*$ denotes the given $W$-equilibrium control, $X^{\alpha^*}$ the corresponding state process, and $U \sim \mbox{Unif}[0,1]$. 

In this section, we will denote $P^{\alpha^*}=\L(U,X^{\alpha^*}) \in  \Punif([0,1] \times \C^d)$  the equilibrium joint law, where we recall that $\C^d=C([0,T];\R^d)$, which is a path space law and which will enable us to use the results proved in Section \ref{sec:conv_emp_meas}. Let $\mu_\cdot \in C([0,T]; \Punif([0,1] \times \R^d))$ represent the measure flow associated with $(U,X^{\alpha^*})$, i.e., $\mu_t := \L(U, X^{\alpha^*}_t)$ for all $t \in [0,T]$. Note that $\mu_t$ is the time-$t$ marginal of $P^{\alpha^*}$, and thus $(\textsf{W}P^{\alpha^*}(u))_t = \textsf{W}\mu_t(u)$, for each $t \in [0,T]$.

We first elaborate on the notation of Section \ref{se:finitegame}, to keep track of the labels (and thus the controls) assigned to each player.
For $n \in \N$ and $\boldsymbol{u}^n := (u^n_1, \ldots, u^n_n) \in [0,1]^n$, let $\bm{X}^{n,\bm{u}^n} := (X^{n,u^n_i,i})_{i \in [n]}$ be the process satisfying the dynamics, 
\begin{equation}\label{eq:nondeviating_process_X^n,i}
\begin{split}
       dX^{n,u^n_i,i}_t &= b(t, X^{n,u^n_i,i}_t, \alpha^*(t,u^n_i, X^{n,u^n_i,i}_t)) dt + \sigma(t, X^{n,u^n_i,i}_t) dB^i_t, \quad X^{n,u^n_i,i}_0 \sim \lambda_{u^n_i},
\end{split}
\end{equation} 
where $B^i$ are independent Brownian motions, and the initial positions $(X^{n,u^n_i,i}_0)_{i \in [n]}$ are independent.
For each $i$ and each $\beta \in \A_n$, let $X^{n,\beta,u^n_i,i}$ be the process arising when player $i$ switches from the control $\alpha^*(\cdot, u^n_i, \cdot)$ to the control $\beta$. More precisely, the process $X^{n,\beta,u^n_i,i}$ is characterized by the dynamics
\begin{equation}\label{eq:deviating_process}
\begin{split}
        dX^{n,\beta,u^n_i,i}_t &= b(t, X^{n,\beta,u^n_i,i}_t, \beta(t, \bm{X}^{n,\beta,\bm{u}^n,i}_t)) dt + \sigma(t, X^{n,\beta,u^n_i,i}_t) dB^i_t, \quad X^{n,\beta,u^n_i,i}_0 \sim \lambda_{u^n_i},
\end{split}
\end{equation}
where we write $\bm{X}^{n,\beta,\bm{u}^n,i}_t$ to denote the vector $\bm{X}^{n,\bm{u}^n}_t$ but with $i^\text{th}$ component equal to $X^{n,\beta,u^n_i,i}_t$ instead of $ X^{n,u^n_i,i}_t$. To simplify the notation, we will sometimes abbreviate $\beta_t=\beta(t, \bm{X}^{n,\beta,\bm{u}^n,i}_t)$. 
Let us write also
\begin{equation}\label{eq:M^n,u^n,i_t}
    M^{n,\bm{u}^n,i} := \frac{1}{n} \sum_{j=1}^n \xi^n_{ij} \delta_{X^{n,u^n_j,j}}, 
\end{equation}
similarly to \eqref{def:Mni-cont}, for the empirical measure appearing in the objective functions of player $i$. Note that since $\xi^n_{ii} = 0$, this empirical measure does not depend on the choice of control of player $i$, and in particular if player $i$ deviates to $\beta$ then the empirical measure \ref{eq:M^n,u^n,i_t} does not need to be modified. 

Let us introduce some notations that will guide us through the proofs.
Recalling the definition of $\epsilon^n_i$, we can bound it by three terms,
\begin{equation*}
\epsilon^n_i(\bm{u}^n) \le \sup_{\beta \in \A_n}\Delta^{n,i}_1(\beta, \bm{u}^n) + \sup_{\beta \in \A_n}\Delta^{n,i}_2(\beta,  \bm{u}^n) + \Delta^{n,i}_3(\bm{u}^n),
\end{equation*}
where we defined
\begin{equation*}
    \begin{split}
        \Delta^{n,i}_1(\beta, \bm{u}^n) &:= \E \left[\int_0^T f(t, X^{n,\beta,u^n_i,i}_t, M^{n,\bm{u}^n,i}_t, \beta_t) dt + g(X^{n,\beta,u^n_i,i}_T, M^{n,\bm{u}^n,i}_T) \right] \\
        &\hspace{.6cm}- \E \left[\int_0^T f(t, X^{n,\beta,u^n_i,i}_t, \textsf{W}\mu_t(u^n_i), \beta_t) dt + g(X^{n,\beta,u^n_i,i}_T, \textsf{W}\mu_T(u^n_i)) \right], \\
        \Delta^{n,i}_2(\beta,  \bm{u}^n) &:= \E \left[\int_0^T f(t, X^{n,\beta,u^n_i,i}_t, \textsf{W}\mu_t(u^n_i), \beta_t) dt + g(X^{n,\beta,u^n_i,i}_T, \textsf{W}\mu_T(u^n_i)) \right] \\
        &\hspace{.6cm}- \E \left[\int_0^T f(t, X^{n,u^n_i,i}_t, \textsf{W}\mu_t(u^n_i), \alpha^*(t, u^n_i, X^{n,u^n_i,i}_t)) dt + g(X^{n,u^n_i,i}_T, \textsf{W}\mu_T(u^n_i) \right], \\
        \Delta^{n,i}_3(\bm{u}^n) &:= \E \left[\int_0^T f(t, X^{n,u^n_i,i}_t, \textsf{W}\mu_t(u^n_i), \alpha^*(t, u^n_i, X^{n,u^n_i,i}_t)) dt + g(X^{n,u^n_i,i}_T, \textsf{W}\mu_T(u^n_i)) \right] \\
        &\hspace{.6cm}- \E \left[\int_0^T f(t, X^{n,u^n_i,i}_t, M^{n,\bm{u}^n,i}_t, \alpha^*(t, u^n_i, X^{n,u^n_i,i}_t)) dt + g(X^{n,u^n_i,i}_T, M^{n,\bm{u}^n,i}_T) \right].
    \end{split} 
\end{equation*}
The first term, $\Delta^{n,i}_1$, is the approximation error incurred when player $i$ substitutes the limiting measure $\textsf{W}\mu_T(u^n_i)$ for the true empirical measure $M^{n,\bm{u}^n,i}_t$, while using the control $\beta$. Similarly for the third term, $\Delta^{n,i}_3$, except now while using the original control $\alpha^*(t,u^n_i,x_i)$.
The second term, $\Delta^{n,i}_2$, compares the control $\beta$ to the control $\alpha^*(t,u^n_i,x_i)$, with the limiting measure in place of the true empirical measure. We will argue that $\Delta^{n,i}_2 \le 0$ thanks to the optimality property of $\alpha^*$, and we will argue that $\Delta^{n,i}_1$ and $\Delta^{n,i}_3$ are small thanks to the convergence of empirical measures.

\begin{lemma} \label{le:Delta2}
We have $\sup_{\beta \in \A_n}\Delta^{n,i}_2(\beta, \bm{u}^n) \le 0$ for a.e.\ $\bm{u}^n \in [0,1]^n$ and all $i \in [n]$.
\end{lemma}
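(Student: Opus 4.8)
The plan is to reduce the inequality $\sup_{\beta\in\A_n}\Delta^{n,i}_2(\beta,\bm u^n)\le 0$ to the optimality property of the $W$-equilibrium control $\alpha^*$, transported to a single fixed label $u^n_i$ via Lemma~\ref{le:optimality_alpha_u}. First I would observe that $\Delta^{n,i}_2(\beta,\bm u^n)$ depends only on the marginal laws of the two $d$-dimensional state processes $X^{n,\beta,u^n_i,i}$ and $X^{n,u^n_i,i}$, since the empirical measure has been replaced by the deterministic flow $t\mapsto\text{\rm\textsf{W}}\mu_t(u^n_i)$ and the only control that acts on the $i$th coordinate dynamics is $\beta$ (through $\beta_t=\beta(t,\bm X^{n,\beta,\bm u^n,i}_t)$) and $\alpha^*(\cdot,u^n_i,\cdot)$ respectively. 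Thus the second expectation in $\Delta^{n,i}_2$ equals $J^{u^n_i,\lambda_{u^n_i}}_W(\mu_\cdot,\alpha^*_{u^n_i})$ in the notation of Section~\ref{se:dependence-on-U}, where $\alpha^*_{u^n_i}(t,x):=\alpha^*(t,u^n_i,x)$, because $X^{n,u^n_i,i}$ is exactly the SDE $X^{\lambda_{u^n_i},\alpha^*_{u^n_i}}$ from that section.

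The first expectation in $\Delta^{n,i}_2$ must be bounded above by $\sup_{\gamma\in\A_1}J^{u^n_i,\lambda_{u^n_i}}_W(\mu_\cdot,\gamma)$. The subtlety is that $\beta\in\A_n$ is a feedback control depending on all $n$ coordinates $\bm X^{n,\beta,\bm u^n,i}_t$, not just on $X^{n,\beta,u^n_i,i}_t$; but the other coordinates $(X^{n,u^n_j,j})_{j\ne i}$ evolve autonomously and are independent of the driving noise $B^i$ of coordinate $i$. Hence one can condition on the entire trajectory $(X^{n,u^n_j,j})_{j\ne i}$, under which $\beta$ becomes a (random, but fixed on that conditioning) open-loop-in-the-others, Markovian-in-$x_i$ control for a single SDE driven by $B^i$ with coefficients $(b,\sigma)$ and reward functional involving the deterministic flow $\text{\rm\textsf{W}}\mu_t(u^n_i)$. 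By the dynamic programming / equivalence-of-controls principle recorded in Remark~\ref{re:Markov-optimality} and used in the proof of Lemma~\ref{le:optimality_alpha_u} (equation~\eqref{pf:DPP-value2}), the value of this single-player control problem is $\sup_{\gamma\in\A_1}J^{u^n_i,\lambda_{u^n_i}}_W(\mu_\cdot,\gamma)$ regardless of whether the competing control is allowed extra randomness. Taking the expectation over $(X^{n,u^n_j,j})_{j\ne i}$ and then the supremum over $\beta$ yields
\begin{align*}
\sup_{\beta\in\A_n}\E\!\left[\int_0^T f(t,X^{n,\beta,u^n_i,i}_t,\text{\rm\textsf{W}}\mu_t(u^n_i),\beta_t)\,dt + g(X^{n,\beta,u^n_i,i}_T,\text{\rm\textsf{W}}\mu_T(u^n_i))\right] \le \sup_{\gamma\in\A_1}J^{u^n_i,\lambda_{u^n_i}}_W(\mu_\cdot,\gamma).
\end{align*}

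Finally, Lemma~\ref{le:optimality_alpha_u} applies: since $\alpha^*$ is a $W$-equilibrium control, for a.e.\ $u\in[0,1]$ the frozen-label control $\alpha^*_u$ attains $\sup_{\gamma\in\A_1}J^{u,\lambda_u}_W(\mu_\cdot,\gamma)$. Applying this with $u=u^n_i$ for each $i$, and noting that the exceptional null set in $[0,1]$ pulls back to a null set in $[0,1]^n$ under each coordinate projection, we get that for a.e.\ $\bm u^n\in[0,1]^n$ and all $i\in[n]$ the first expectation in $\Delta^{n,i}_2$ is $\le J^{u^n_i,\lambda_{u^n_i}}_W(\mu_\cdot,\alpha^*_{u^n_i})$, which is exactly the second expectation. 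Hence $\Delta^{n,i}_2(\beta,\bm u^n)\le 0$ for every $\beta$, and the supremum over $\beta$ is $\le 0$. The main obstacle is the middle step: justifying rigorously that an $n$-player feedback deviation $\beta\in\A_n$ cannot outperform, in the decoupled reward $\Delta^{n,i}_2$, the best single-player Markovian control $\gamma\in\A_1$ — i.e.\ that conditioning on the independent competitors' paths reduces the problem to the single-agent control problem of Section~\ref{se:dependence-on-U} and that the extra path-dependent randomness does not help, which is precisely the content of \eqref{pf:DPP-value2} and the measurable-selection argument behind it.
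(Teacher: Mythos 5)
Your proposal is correct and follows the same overall structure as the paper's proof: identify the second expectation with $J_W^{u^n_i,\lambda_{u^n_i}}(\mu_\cdot,\alpha^*_{u^n_i})$, bound the first expectation by $\sup_{\gamma\in\A_1}J_W^{u^n_i,\lambda_{u^n_i}}(\mu_\cdot,\gamma)$, and conclude via Lemma~\ref{le:optimality_alpha_u}. The one place you diverge is in the middle step: the paper skips the conditioning argument and instead directly observes that the joint law $\L\big(dt\,\delta_{\beta_t}(da),\,u^n_i,\,X^{n,\beta,u^n_i,i}\big)$ is an element of $\RC_{u^n_i,\lambda_{u^n_i}}$ (by It\^o's formula, the martingale property holds with respect to the full filtration and therefore the coarser canonical one), after which \eqref{pf:DPP-value2} gives the bound immediately. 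Your conditioning argument arrives at the same place and is valid, but it is a bit more delicate to make fully rigorous (one must check that the regular conditional law of $X^{n,\beta,u^n_i,i}$ given the other trajectories is the law of the solution to the reduced $d$-dimensional SDE with the frozen Markovian control $\widetilde\beta$, and that well-posedness holds for the measurable, non-Lipschitz $\widetilde\beta$ via \cite[Theorem 1]{veretennikov1981strong}); the paper's route via the relaxed-control martingale problem packages all of that in one line. Both approaches ultimately rest on the same two ingredients, \eqref{pf:DPP-value2} and Lemma~\ref{le:optimality_alpha_u}.
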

\begin{proof}
Note that $X^{n,u^n_i,i}$ has the same law as $X^{\lambda_{u^n_i},\alpha^*_{u^n_i}}$ as in Lemma \ref{le:optimality_alpha_u}, where $\alpha^*_u(t,x) := \alpha^*(t,u,x)$. Thus  $\Delta^{n,i}_2(\beta,  \bm{u}^n)$ equals
\begin{align*}
\E \left[\int_0^T f(t, X^{n,\beta,u^n_i,i}_t, \textsf{W}\mu_t(u^n_i), \beta_t) dt + g(X^{n,\beta,u^n_i,i}_T, \textsf{W}\mu_T(u^n_i)) \right] - J_W^{u^n_i,\lambda_{u^n_i}}(\mu,\alpha^*_u).
\end{align*}
Recall that $\beta_t=\beta(t, \bm{X}^{n,\beta,\bm{u}^n,i}_t)$ can depend on all $n$ players' state processes, and for this reason the claim is not an immediate consequence of Lemma \ref{le:optimality_alpha_u}. But this issue is resolved by \eqref{pf:DPP-value2}, after noting that the joint law of $(dt\delta_{\beta_t}(da),u^n_i,X^{n,\beta,u^n_i,i})$ belongs to the set $\RC_{u^n_i,\lambda_{u^n_i}}$ defined in the proof of Lemma \ref{le:optimality_alpha_u}. Indeed, we then deduce that 
\begin{align*}
\sup_{\beta \in \A_n}\Delta^{n,i}_2(\beta,  \bm{u}^n) \le \sup_{\beta \in \A_1} J_W^{u^n_i,\lambda_{u^n_i}}(\mu,\beta) - J_W^{u^n_i,\lambda_{u^n_i}}(\mu,\alpha^*_u).
\end{align*}
By Lemma \ref{le:optimality_alpha_u}, this is $\le 0$ for a.e.\ $\bm{u}^n \in [0,1]^n$ and all $i \in [n]$.
\end{proof}

From Lemma \ref{le:optimality_alpha_u}, we deduce that 
\begin{equation*}
\epsilon^n_i(\bm{u}^n) \le \sup_{\beta \in \A_n}\Delta^{n,i}_1(\beta, \bm{u}^n) + \Delta^{n,i}_3(\bm{u}^n), \quad \text{a.e. } \bm{u}^n.
\end{equation*}
Taking averages, we find 
\begin{equation}\label{eq:first_bound_epsilon^n_i}
\frac{1}{n} \sum_{i=1}^n \epsilon^n_i(\bm{u}^n) \le \frac{1}{n}\sum_{i=1}^n \sup_{\beta \in \A_n} \Delta^{n,i}_1(\beta, \bm{u}^n) +  \frac{1}{n} \sum_{i=1}^n \Delta^{n,i}_3(\bm{u}^n).
\end{equation}
Now that we made use of the optimality of $\alpha^*$, it remains to use the convergence results of Section \ref{sec:conv_emp_meas} to show that the right-hand side of \eqref{eq:first_bound_epsilon^n_i} is small.

First, note that $\{\lambda_u : u \in [0,1]\}$ is tight. This is an assumption in Theorems \ref{th:approxEQ-general} and \ref{th:approxEQ-sampling}, and in Theorem \ref{th:approxEQ-continuous} it is a consequence of the assumed continuity of $u \mapsto \lambda_u$.
By boundedness of $b,\sigma$, it is then standard (e.g., using \cite[Theorem 1.4.6]{stroock1997multidimensional}) that the set of laws $\{\L(X^{n,\beta, u^n_i, i}) : n \in \N,  \beta \in \A_n, \bm{u}^n \in [0,1]^n, i \in [n]\}$ is a tight subset of $\P(\C^d)$, where we recall that $\C^d:=C([0,T];\R^d)$. 
Letting $\epsilon > 0$, we may then find a compact set $K \subset \C^d$ such that $\sup_{n,\beta, \bm{u}^n,i} \PP(X^{n, \beta, u^n_i,i} \notin K) \le \epsilon$.
Define the function $h : [0,1] \times \M_+(\C^d) \to \R$ by
\begin{equation}\label{eq:def_h}
\begin{split}
h(u,m) &= \sup_{a \in A} \sup_{z \in K} \left| \int_0^T (f(t,z_t,\textsf{W}\mu_t(u),a) - f(t,z_t, m_t,a))dt \right| \\
    &\hspace{2cm} + \left|g(z_T, \textsf{W}\mu_T(u)) - g(z_T, m_T)\right|,
\end{split}
\end{equation}
where $m_t\in\M_+(\R^d)$ denotes the image of a measure $m \in \M_+(\C^d)$ by the coordinate map $x \mapsto x_t$.
Since $f(t,x,m,a)$ and $g(x,m)$ are bounded, measurable, and continuous in $(x,m,a)$, we deduce that function $h$ bounded and measurable \cite[Theorem 18.19]{aliprantisborder}. Moreover, it follows from compactness of $A$ and $K$ that $h(u,\cdot)$ is continuous on $\M_+(\C^d)$ for each $u \in [0,1]$.
Note that $h(u,\textsf{W}\mu(u))=0$ for every $u$. In order to bound \eqref{eq:first_bound_epsilon^n_i} in terms of $h$, let us choose $C >0$ such that $\max(|f|, |g|) \le C$, and then note that
\begin{equation}\label{eq:inequality_delta_1+3}
\begin{split}
\frac{1}{n}\sum_{i=1}^n \epsilon^n_i(\bm{u}^n) &\le \frac{2}{n} \sum_{i=1}^n \E | h(u^n_i,  M^{n,\bm{u}^n,i})  | + 8 \epsilon C.
\end{split}
\end{equation}
The rest of the argument is different for Theorem \ref{th:approxEQ-general} versus Theorem \ref{th:approxEQ-continuous}.

\subsection{General kernels}

We first prove Theorem \ref{th:approxEQ-general}. Recall that $U^n_i \sim \mathrm{Unif}(I^n_i)$ are independent, and let $\bm{U}^n = (U^n_1, \ldots, U^n_n)$. Abbreviate $\bm{I}^n := I^n_1 \times \cdots \times I^n_n$, and note that $\bm{U}^n$ is uniform on $\bm{I}^n$. Let us also define processes $Y^{n,i}$ such that $(U^n_i,Y^{n,i})_{i \in [n]}$ are independent, with $\L(Y^{n,i}\,|\,U^n_i=u)=\L(X^{\alpha^*}\,|\,U=u)$ for $u \in I^n_i$.
Let $\bm{Y}^n=(Y^{n,1},\ldots,Y^{n,n})$, and define the neighborhood empirical measures (random measures on $\C^d$)
\begin{align*}
\Mni = \frac{1}{n}\sum_{j=1}^n \xi^n_{ij} \delta_{Y^{n,j}}.
\end{align*}
Recall that the process $X^{n,u^n_i,i}$ defined in the beginning of the section is such that $\L(X^{n, u^n_i, i}) = \mu_{u^n_i}$. Recalling that $(U,X^{\alpha^*})$ denotes the equilibrium pair, we have
\begin{align}
\L(X^{\alpha^*}\,|\,U=u^n_i)=\L(Y^{n,i}\,|\,U^n_i=u^n_i)=\L(X^{n, u^n_i, i}) \label{pf:approxEQ-condlaw1}
\end{align}
Hence, for a.e.\ $\bm{u}^n$ and any bounded measurable function $\varphi : \bm{I}^n \times (\C^d)^n \to \R$, we can write
\begin{align*}
\E[\varphi(\bm{u}^n,\bm{X}^{n, \bm{u}^n})] = \E[\varphi(\bm{U}^n,\bm{Y}^n)\,|\,\bm{U}^n=\bm{u}^n], \quad \text{a.e. } \bm{u}^n \in \bm{I}^n.
\end{align*}
In particular, since the empirical measure $M^{n,\bm{u}^n,i}$ defined \eqref{eq:M^n,u^n,i_t} is a functional of $\bm{X}^{n, \bm{u}^n}$, we deduce similarly that 
\begin{align*}
\E[\varphi(\bm{u}^n,M^{n,\bm{u}^n,i})] = \E[\varphi(\bm{U}^n,\Mni)\,|\,\bm{U}^n=\bm{u}^n], \quad \text{a.e. } \bm{u}^n \in \bm{I}^n,
\end{align*}
for bounded measurable $\varphi : \bm{I}^n \times \M_+(\C^d) \to \R$. Applying this in \eqref{eq:inequality_delta_1+3}, along with the tower property, to deduce
\begin{align}
\frac{1}{n}\sum_{i=1}^n \E[\epsilon^n_i(\bm{U}^n)] &\le \frac{2}{n} \sum_{i=1}^n \E | h(U^n_i,  \Mni)  | + 8 \epsilon C. \label{pf:generalkernel1}
\end{align}
The identities \eqref{pf:approxEQ-condlaw1} put us in the setting of Theorem \ref{th:generalprinciple}. As noted above, $h$ is bounded and continuous in its second variable. Hence, Theorem \ref{th:generalprinciple} implies that 
\begin{align*}
\frac{1}{n} \sum_{i=1}^n \E | h(U^n_i,  \Mni)  | \to \E|h(U,\textsf{W}P^{\alpha^*}(U))| = 0,
\end{align*}
where $P^{\alpha^*} := \L(U,X^{\alpha^*})$, with the last identity using the fact that $h(u,\textsf{W}P^{\alpha^*}(u))=0$ for all $u$, which is a consequence of the identity of time-$t$ marginals $(\textsf{W}P^{\alpha^*}(u))_t=\textsf{W}\mu_t$.
Apply this in \eqref{pf:generalkernel1} and then sending $\epsilon \to 0$ completes the proof of Theorem \ref{th:approxEQ-general}. \hfill \qedsymbol

\subsection{Continuous kernels} \label{se:pf:cont-kernel}
We next prove Theorem \ref{th:approxEQ-continuous}. The fact that (1) and (2a--d) imply (2) is a consequence of Corollary \ref{co:cont-disint}. 
The function $h(u,m)$ from \eqref{eq:def_h} is continuous in $m$, uniformly in $u$, because
\begin{align*}
\sup_{u \in [0,1]} |h(u,m') - h(u,m)| &\le \sup_{a \in A} \sup_{z \in K} \left| \int_0^T (f(t,z_t,m'_t,a) - f(t,z_t, m_t,a))dt \right| \\
    &\hspace{2cm} + \left|g(z_T, m'_T) - g(z_T, m_T)\right|,
\end{align*}
and the right-hand side vanishes as $m' \to m$ by compactness of $A$ and $K$ and by joint continuity of $f$ and $g$.
Using also the continuity assumptions of Theorem \ref{th:approxEQ-continuous}, we are therefore in the setting of Theorem \ref{th:generalprinciple-cont}.

Recalling again that $h(u,\textsf{W}P^{\alpha^*}(u))=0$ for all $u$ where again $P^{\alpha^*} := \L(U,X^{\alpha^*})$, Theorem \ref{th:generalprinciple-cont} yields
\begin{align*}
\lim_{n\to\infty} \sup_{\bm{u}^n \in \bm{I}^n} \frac{1}{n} \sum_{i=1}^n \E | h(u^n_i,  M^{n,\bm{u}^n,i})  | = 0.
\end{align*}
Apply this in \eqref{eq:inequality_delta_1+3} and then send $\epsilon \to 0$ to deduce Theorem \ref{th:approxEQ-continuous}. \hfill \qedsymbol

\subsection{Sampling kernels}
We finally prove Theorem \ref{th:approxEQ-sampling}. Again let $P^{\alpha^*} := \L(U,X^{\alpha^*})$, and write $P^{\alpha^*}(du,dx)=duP^{\alpha^*}_u(dx)$ for its disintegration. To prepare for an application of Theorem \ref{th:approxEQ-sampling}, let us first argue that $\{P^{\alpha^*}_u : u \in[0,1]\}$ is tight. Note that $P^{\alpha^*}_u$ is the law of the solution of the SDE
\begin{align*}
dX_t = b(t,X_t,\alpha^*(t,u,X_t))dt + \sigma(t,X_t)dB_t, \quad X_0 \sim \lambda_u.
\end{align*}
Since $b$ and $\sigma$ are bounded and $\{\lambda_u : u \in [0,1]\}$ is tight by assumption, the tightness of $\{P^{\alpha^*}_u : u \in[0,1]\}$ follows easily, e.g., using \cite[Theorem 1.4.6]{stroock1997multidimensional}.

Now, recall that $(u_i)_{i \in \N} \in [0,1]^{\infty}$, where $[0,1]^{\infty}$ is equipped with $($Unif$[0,1])^{\infty}$, and $\xi^n_{ij}=W(u_i,u_j)1_{i \neq j}$ for $i,j \in [n]$ in Theorem \ref{th:approxEQ-sampling}. As in \eqref{eq:inequality_delta_1+3}, we have
\begin{align}
\epsilon^n_i(u_1, \ldots, u_n) &\le 2\E | h(u_i,  N^{n,(u_1, \ldots, u_n)}_i)  | + 8 \epsilon C, \label{pf:apprx-sampling1}
\end{align}
where we define $N^{n,(u_1, \ldots, u_n)}_i = \frac{1}{n}\sum_{j=1,\,j \neq i}^n W(u_i,u_j) \delta_{X^{n,u_j,j}}$. 
Recalling that $h(u,\textsf{W}P^{\alpha^*}(u))=0$ for all $u$, we may thus apply Theorem \ref{th:generalprinciple-sampling} to get
\begin{align*}
\lim_{n\to\infty} \max_{i \in [n]} \E | h(u_i,  N^{n,(u_1, \ldots, u_n)}_i)  | = 0, \quad \text{for a.e. } (u_i)_{i \in \N} \in [0,1]^\infty.
\end{align*}
Combine this with \eqref{pf:apprx-sampling1} and then send $\epsilon \to 0$ to complete the proof. \hfill\qedsymbol

\begin{remark}
Theorem \ref{th:approxEQ-sampling} could likely be strengthened to include a rate of convergence, if one imposed further continuity assumptions on $f$ and $g$. The estimates stemming from Hoeffding's inequality in the proof of Theorem \ref{th:approxEQ-sampling} could, in principle, be traced through to yield exponential bounds on the measure of the set of $(u_1,\ldots,u_n) \in [0,1]^n$ such that $\max_{i \in [n]}\epsilon^n_i(u_1,\ldots,u_n) > \epsilon$. See \cite[Proposition 3]{aurell2021stochastic} for a related result based on a clever application of the law of the iterated logarithm.
\end{remark}

\section{A linear-quadratic example} \label{se:LQ}

In this section we study a linear-quadratic model of \emph{flocking behavior}, inspired by \cite{carmona2013mean,lacker2021case}, which is simple and yet rich enough to exhibit an interesting dependence on the structure of the interaction matrix. This will illustrate also the relative simplicity of our formulation of graphon equilibrium. It should be noted that the model in this section does not fit into the standing assumptions imposed for the theoretical developments in Section \ref{se:mainresults}. However, the definitions of equilibrium require little adaptation for the setting considered below.

We work in dimension $d=1$. We shall now assume $W \in L^2_+[0,1]^2$, i.e., the kernel is square-integrable.
For ${m} \in \P([0,1] \times \R)$, recall the definition of the measure-valued function $\textsf{W}{m} : [0,1] \to \M_+(\R)$ from \eqref{def:Woperator-measure}. We define its mean $\overline{\textsf{W}}{m} : [0,1] \to \R$ by
\begin{align*}
\overline{\textsf{W}}{m}(u) = \int_{[0,1] \times \R} W(u,v) \, x \, {m}(dv,dx),
\end{align*}
whenever this integral is well-defined.
The linear-quadratic model we study can be summarized concisely, as in \eqref{def:graphonEQ-compact}, as follows:
\begin{equation}
    \left\{ \begin{array}{ll}
        \inf_{\alpha \in \mathcal{A}} & \frac{1}{2} \E \left[\int_0^T \alpha_t^2 dt + c|X_T- \overline{\textsf{W}}\mu_T(U)|^2 \right] \\
        \mbox{s.t.} & dX_t = \alpha_t dt + \sigma dB_t, \\
        & \mu_t = \L(U, X_t),  \ \ (U, X_0) \sim \lambda \in \Punif([0,1] \times \R).
    \end{array}
    \right. \label{def:LQ-compact}
\end{equation}
Note that, in equilibrium, $\overline{\textsf{W}}\mu_T(u)=\E[W(u,U)X_T]$ for a.e.\ $u$.
In the notation of Section \ref{se:mainresults}, we are choosing $A=\R$ and
\begin{align*}
b(t,x,a)&=a, \quad \sigma(t,x)=\sigma, \quad f(t,x,m,a) = -\frac12 a^2, \quad g(x,m) = -c\Big(x - \int_{\R} x\,m(dx)\Big)^2.
\end{align*}

\begin{proposition}
Assume $W \in L^2_+[0,1]^2$ satisfies $\|W\|_{L^2[0,1]^2} < 1+(cT)^{-1}$. Assume $\lambda$ has a finite second moment:
\begin{align*}
\int_{[0,1] \times \R} x^2\, \lambda(du,dx) < \infty.
\end{align*}
Then there exists a $W$-equilibrium with associated control given by
\begin{align*}
\alpha(t,u,x) &= \frac{c}{c(T-t)+1}(M(u) - x),
\end{align*}
where $M \in L^2[0,1]$ is defined by
\begin{align}
M := \tfrac{1}{cT+1}\text{\rm\textsf{W}}(\text{\rm\textsf{Id}}-\tfrac{cT}{cT+1}\text{\rm\textsf{W}})^{-1}\psi , \label{def:LQ-M}
\end{align}
with $\psi \in L^2[0,1]$ defined by $\psi(u):=\E[X_0\,|\,U=u]$. Here, $\text{\rm\textsf{Id}}$ is the identity operator and $\text{\rm\textsf{W}}$ is viewed as an operator on $L^2[0,1]$ as defined in \eqref{def:Woperator-function}.
\end{proposition}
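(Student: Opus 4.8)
The plan is to solve the single player's optimization for a frozen measure flow and then close the fixed point. Fix $\mu_\cdot \in C([0,T];\Punif([0,1]\times\R))$ and note that the cost in \eqref{def:LQ-compact} depends on $\mu_\cdot$ only through the function $m:=\overline{\textsf{W}}\mu_T\in L^2[0,1]$ (this is well defined a.e.\ and lies in $L^2[0,1]$ whenever $\mu_T$ has a finite second moment, by Cauchy--Schwarz and $W(u,\cdot)\in L^2[0,1]$ for a.e.\ $u$). Conditionally on $U=u$ the problem is then the classical scalar LQ tracking problem
\begin{align*}
\inf_{\alpha}\tfrac12\E\Big[\int_0^T\alpha_t^2\,dt + c\big(X_T-m(u)\big)^2\Big],\qquad dX_t=\alpha_t\,dt+\sigma\,dB_t,\quad X_0\sim\lambda_u.
\end{align*}
First I would exhibit the candidate value function $v(t,u,x)=\tfrac12 p(t)\big(x-m(u)\big)^2+q(t)$, where $p$ solves the Riccati equation $\dot p=p^2$, $p(T)=c$, so $p(t)=\tfrac{c}{1+c(T-t)}$, and $\dot q=-\tfrac12\sigma^2 p$, $q(T)=0$. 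A standard verification argument — which, as noted in Section~\ref{se:PDE}, requires no differentiability of $v$ in $u$ — identifies the optimal control as $\alpha^*(t,u,x)=-p(t)\big(x-m(u)\big)=p(t)\big(m(u)-x\big)$. Because this model lies outside the standing assumptions of Section~\ref{se:mainresults} (unbounded drift/cost, $L^2$-valued target), the only delicate point is admissibility and the integrability needed to run the verification; both follow from $m\in L^2[0,1]$ and propagation of second moments along the linear dynamics, using $\int x^2\,\lambda(du,dx)<\infty$.

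Next I would compute the conditional terminal mean under $\alpha^*$. Conditionally on $U=u$, the optimal state obeys $dX_t=p(t)\big(m(u)-X_t\big)\,dt+\sigma\,dB_t$ with $X_0\sim\lambda_u$, so $\bar x(t):=\E[X_t\,|\,U=u]$ solves $\dot{\bar x}=p(t)\big(m(u)-\bar x\big)$, $\bar x(0)=\psi(u)$, giving $\bar x(t)-m(u)=\big(\psi(u)-m(u)\big)e^{-\int_0^t p(s)\,ds}$. Since $\int_0^T p(s)\,ds=\ln(1+cT)$, we obtain
\begin{align*}
\E[X_T\,|\,U=u]=\tfrac{cT}{cT+1}\,m(u)+\tfrac{1}{cT+1}\,\psi(u).
\end{align*}
The equilibrium condition $m(u)=\overline{\textsf{W}}\mu_T(u)=\int_0^1 W(u,v)\,\E[X_T\,|\,U=v]\,dv$ then reads, in $L^2[0,1]$,
\begin{align*}
\big(\textsf{Id}-\tfrac{cT}{cT+1}\textsf{W}\big)m=\tfrac{1}{cT+1}\textsf{W}\psi .
\end{align*}
Viewing $\textsf{W}$ as an operator on $L^2[0,1]$, it is bounded with $\|\textsf{W}\|\le\|W\|_{L^2[0,1]^2}$ (the Hilbert--Schmidt bound), so the hypothesis $\|W\|_{L^2[0,1]^2}<1+(cT)^{-1}$ yields $\tfrac{cT}{cT+1}\|\textsf{W}\|<1$; hence $\textsf{Id}-\tfrac{cT}{cT+1}\textsf{W}$ is boundedly invertible by a Neumann series, the equation has a unique solution $m\in L^2[0,1]$, and since $\textsf{W}$ commutes with rational functions of itself, $m=\tfrac{1}{cT+1}\textsf{W}\big(\textsf{Id}-\tfrac{cT}{cT+1}\textsf{W}\big)^{-1}\psi=M$ as in \eqref{def:LQ-M}. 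Here $\psi\in L^2[0,1]$ by conditional Jensen and the second-moment assumption on $\lambda$.

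Finally I would close the loop: set $\alpha(t,u,x)=p(t)\big(M(u)-x\big)$, let $X^\alpha$ be the corresponding state process and $\mu_t:=\L(U,X^\alpha_t)$. Continuity of $t\mapsto\mu_t$ into $\Punif([0,1]\times\R)$ follows from continuity of $t\mapsto X^\alpha_t$ together with uniform integrability from the moment bounds; the computation above shows $\overline{\textsf{W}}\mu_T=M$, so $\alpha$ is precisely the control associated with $\mu_\cdot$ via the target $m=M$, and Step~1 shows it is optimal for this $\mu_\cdot$. This is exactly the assertion that $\mu_\cdot$ is a $W$-equilibrium with the stated control. I expect the main obstacle to be purely technical bookkeeping — the verification theorem and the admissibility/integrability estimates in this non-standard LQ setting, plus the measurable dependence on the label $u$ — whereas the operator algebra underlying the fixed point is routine once $\textsf{W}$ is treated as a bounded operator on $L^2[0,1]$.
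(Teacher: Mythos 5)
Your proof is correct and reaches the same final formula, but the fixed-point computation takes a somewhat more direct route than the paper's. The paper introduces an independent copy $(\widetilde B,\widetilde U,\widetilde X)$ and derives, for an arbitrary auxiliary kernel $V$, a linear ODE for the smoothed quantity $t\mapsto \overline{\textsf{V}}\mu_t(u)$, solves it, specializes to $t=T$, and only then chooses the particular $\textsf{V}=\textsf{W}(\textsf{Id}-\tfrac{cT}{cT+1}\textsf{W})^{-1}$ to close the system. You instead compute $\E[X_T\mid U=u]$ directly from the scalar conditional-mean ODE $\dot{\bar x}=p(t)(m(u)-\bar x)$, obtain the closed form $\E[X_T\mid U=u]=\tfrac{cT}{cT+1}m(u)+\tfrac{1}{cT+1}\psi(u)$, and then apply $\textsf{W}$ once to read off the fixed-point equation $(\textsf{Id}-\tfrac{cT}{cT+1}\textsf{W})m=\tfrac{1}{cT+1}\textsf{W}\psi$. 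This is more elementary: it avoids manipulating a parameterized family of test kernels $V$ and avoids having to note that the chosen $\textsf{V}$ has a nonnegative $L^2$ kernel (which the Neumann series does deliver, but your route never needs it). The paper's detour has the small advantage of never exhibiting the conditional mean $u\mapsto\E[X_T\mid U=u]$ as an intermediate object, working instead entirely with $\textsf{W}$-smoothed quantities, but in the end the two arguments are equivalent after commuting $\textsf{W}$ past the resolvent. The Riccati/HJB part (same ansatz $v(t,u,x)=q(t)+\tfrac12 p(t)(x-m(u))^2$, same $p$, no $u$-derivatives needed) and the Hilbert--Schmidt/Neumann argument for invertibility match the paper exactly.

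Two small notes on your bookkeeping claims: the verification step and the propagation of second moments along the linear dynamics are, as you say, routine, and the commutation $(\textsf{Id}-\alpha\textsf{W})^{-1}\textsf{W}=\textsf{W}(\textsf{Id}-\alpha\textsf{W})^{-1}$ is the only algebra needed to match \eqref{def:LQ-M}. You might also observe (consistently with the paper) that $\psi\in L^2[0,1]$ follows from conditional Jensen and the second-moment hypothesis on $\lambda$, which you did flag.
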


The assumption that $\|W\|_{L^2[0,1]^2} < 1+(cT)^{-1}$ ensures the existence of the inverse operator appearing in \eqref{def:LQ-M}. Equilibria may fail to exist without this assumption. Indeed, if $W \equiv 1+cT$, then the proof shows that there is no solution, unless $\E[X_0\,|\,U]=0$ a.s., in which case the solution is as above with $M \equiv 0$.

There is a notable appearance here of a common notion of \emph{centrality} used in graph theory. 

If $X_0$ and $U$ are independent, then $\psi \equiv \E[X_0]$ and so
\begin{align*}
M &= \E[X_0]\tfrac{1}{cT+1} \textsf{W} (\textsf{Id}-\tfrac{cT}{cT+1} \textsf{W} )^{-1}\bm{1} \\
	&= \tfrac{1}{cT}\E[X_0] \big[(\textsf{Id}-\tfrac{cT}{cT+1} \textsf{W} )^{-1} - \textsf{Id}\big]\bm{1},
\end{align*}
where $\bm{1}$ is the constant function equal to 1. 
The quantity $\big[(\textsf{Id}-\tfrac{cT}{cT+1} \textsf{W} )^{-1} - \textsf{Id}\big]\bm{1}(u)$ is precisely the \emph{Katz centrality} or \emph{$\alpha$-centrality} of the vertex $u \in [0,1]$, or rather the infinite-dimensional (graphon) analogue thereof, with parameter $\alpha=cT/(cT+1)$.
When $X_0$ and $U$ are not independent, we have instead a generalization of this centrality concept in which a vertex $u$ receives a weight proportional to the mean initial position $\psi(u)$.
Note if $X_0=h(U)$ is $U$-measurable, then $\psi = h$.

\subsection{Derivation of the solution}
We follow roughly the PDE approach discussed in Section \ref{se:PDE}.
We fix for now a mean field term and compute the best response. That is, we fix for now a measurable function $M : [0,1] \to \R$, to play the role of the mean function $\overline{\textsf{W}}\mu_T$. The stochastic control problem in \eqref{def:LQ-compact} is associated with the HJB equation
\begin{align*}
\partial_t v(t,u,x) - \frac12|\partial_x v(t,u,x)|^2 + \frac{\sigma^2}{2}\partial_{xx}v(t,u,x) = 0, \quad v(T,u,x) = c\big(x - M(u)\big)^2.
\end{align*}
The corresponding optimal control is $\alpha(t,u,x) = -\partial_xv(t,u,x)$.
We solve this PDE explicitly using the ansatz $v(t,u,x) = \psi(t)+\frac12\varphi(t)(x-M(u))^2$, where $\varphi$ and $\psi$ are functions on $[0,T]$ to be determined. Plugging this ansatz into the HJB, we obtain that $\varphi$ and $\psi$ should satisfy
\begin{equation*}
    \frac{1}{2}(x- M(u))^2(\varphi'(t) - \varphi(t)^2) + \frac{\sigma^2}{2} \varphi(t) + \psi'(t) = 0,
\end{equation*}
for all $(t,u,x) \in (0,T) \times [0,1] \times \R$, along with the terminal conditions $\varphi(T)=c$ and $\psi(T)=0$. Matching coefficients, we find
\begin{align*}
\varphi'(t) &= \varphi^2(t), \qquad 
\psi'(t) = - \frac{\sigma^2}{2} \varphi(t).
\end{align*}
This system is easily solved using the aforementioned boundary conditions:
\begin{equation*}
\varphi(t) = \frac{c}{c(T-t) + 1}, \qquad \psi(t) = \frac{\sigma^2}{2} \log (c(T-t)+1).
\end{equation*}
The optimal control is thus given by $\alpha^*(t, u, x) = - \partial_xv(t,u,x) = \frac{c}{c(T-t)+1} (M(u)-x)$, and the optimal state process thus satisfies the following dynamics
\begin{equation*}
dX_t = \frac{c}{c(T-t)+1} (M(U) - X_t)dt + \sigma dB_t, \qquad (U,X_0) \sim \lambda.
\end{equation*}
Define $\mu_t = \L(U,X_t)$ for each $t \in [0,T]$. Then $\mu$ is a graphon equilibrium if and only if $M(u) = \overline{\textsf{W}}\mu_T(u)$, i.e., $M(u) = \E[W(u,U)X_T]$, for a.e.\ $u \in [0,1]$.  In other words, we will have an equilibrium if we can solve the (McKean-Vlasov) SDE
\begin{equation}\label{eq:dynamics_X_eq_LQ}
\begin{split}
dX_t &= \frac{c}{c(T-t)+1} (\overline{\textsf{W}}\mu_T(U) - X_t)dt + \sigma dB_t, \qquad (U,X_0) \sim \lambda, \\
\mu_t &= \L(U,X_t), \quad t \in [0,T].
\end{split}
\end{equation}
To solve this equation, it is convenient to introduce an independent copy $(\widetilde{B},\widetilde{U},\widetilde{X})$ of $(B,U,X)$.
As a first step, we find an expression for $\overline{\textsf{V}}\mu_T(U)$ for every kernel $V \in L^2_+[0,1]^2$, where we note by definition that
\begin{equation}
\overline{\textsf{V}}\mu_T(U) = \int_{[0,1] \times \R} V(U,v) \, x \, \mu_T(dv,dx) = \E[V(U,\widetilde{U})\widetilde{X}_T\,|\,U].  \label{eq:FP1}
\end{equation}
To find an expression for this, note that the SDE \eqref{eq:dynamics_X_eq_LQ} implies
\begin{align*}
\widetilde{X}_t = \widetilde{X}_0 + \int_0^t\frac{c}{c(T-s)+1}\big( \overline{\textsf{W}}\mu_T(\widetilde{U})  -  \widetilde{X}_s\big)\,ds + \sigma \widetilde{B}_t.
\end{align*}
Multiply by $V(U,\widetilde{U})$ and take conditional expectations given $U$, using independence of $\widetilde{B}$ and $\widetilde{U}$, to get
\begin{align} \begin{split}
\overline{\textsf{V}}\mu_t(U) &= \E[V(U,\widetilde{U})\widetilde{X}_t\,|\,U] \\
		&= \overline{\textsf{V}}\mu_0(U)  + \int_0^t\frac{c}{c(T-s)+1}\big(\E[V(U,\widetilde{U})\overline{\textsf{W}}\mu_T(\widetilde{U})\,|\,U]  - \overline{\textsf{V}}\mu_s(U)\big)\,ds. \end{split} \label{pf:Vmu1}
\end{align}
The second to last term simplifies by Fubini's theorem:
\begin{align*}
\E[V(U,\widetilde{U})\overline{\textsf{W}}\mu_T(\widetilde{U})\,|\,U] &= \int_0^1V(U,\widetilde{u})\overline{\textsf{W}}\mu_T(\widetilde{u})\,d\widetilde{u} \\
	&= \int_0^1V(U,\widetilde{u})\int_{[0,1]\times \R} W(\widetilde{u},v)\,x\,\mu_T(dv,dx)\,d\widetilde{u} \\
	&= \int_{[0,1]\times \R} V\circ W(U,v)\,x\,\mu_T(dv,dx).
\end{align*}
Here we define $V \circ W \in L^2_+[0,1]^2$ by $V\circ W(u,v) := \int_0^1 V(u,\widetilde{u})W(\widetilde{u} ,v) \,d\widetilde{u}$, which is exactly the kernel of the composition operator $\textsf{V} \circ \textsf{W}$, which we abbreviate as $\textsf{VW}$. We may thus write
\begin{align*}
\E[V(U,\widetilde{U})\overline{\textsf{W}}\mu_T(\widetilde{U})\,|\,U] = \overline{\textsf{VW}}\mu_T(U).
\end{align*}
Use this identity and differentiate \eqref{pf:Vmu1} to find that $(\overline{\textsf{V}}\mu_t(u))_{t \in [0,T]}$ obeys the differential equation
\begin{align*}
\frac{d}{dt}\overline{\textsf{V}}\mu_t(u) = \frac{c}{c(T-t)+1}\big(\overline{\textsf{VW}}\mu_T(u) - \overline{\textsf{V}}\mu_t(u)\big).
\end{align*}

It follows that $\overline{\textsf{V}}\mu_t(u)$ must be of the form
\begin{align*}
\overline{\textsf{V}}\mu_t(u) = \overline{\textsf{VW}}\mu_T(u) + \kappa(u)(c(T-t)+1), \quad t \in [0,T],
\end{align*}
for a $u$-dependent parameter $\kappa(u)$ to be determined by the initial conditions. Setting $t=0$ implies $\kappa(u)=\frac{1}{cT+1}(\overline{\textsf{V}}\mu_0(u) - \overline{\textsf{VW}}\mu_T(u))$, and thus
\begin{align}
\overline{\textsf{V}}\mu_t(u) = \frac{ct}{cT+1}\overline{\textsf{VW}}\mu_t(u)  + \frac{c(T-t)+1}{cT+1}\overline{\textsf{V}}\mu_0(u). \label{pf:VW2}
\end{align}
In particular, setting $t=T$, and noting that $\overline{\textsf{V}}\mu_T(u)$ depends linearly on the operator $\textsf{V}$, we find
\begin{align}
\overline{\textsf{V}\big(\textsf{Id}-\tfrac{cT}{cT+1}\textsf{W}\big)}\mu_T(u) = \tfrac{1}{cT+1}\overline{\textsf{V}}\mu_0(u). \label{pf:VW3}
\end{align}
Choosing $\textsf{V}=\textsf{W}(\textsf{Id}-\tfrac{cT}{cT+1}\textsf{W})^{-1}$ yields
\begin{align}
\overline{\textsf{W}}\mu_T(u) = \tfrac{1}{cT+1}\overline{\textsf{W}(\textsf{Id}-\tfrac{cT}{cT+1}\textsf{W})^{-1}}\mu_0(u). \label{pf:VW4}
\end{align}

Note also that $\mu_0=\L(U,X_0)$, and so for any kernel $V$ we have
\begin{align*}
\overline{\textsf{V}}\mu_0(u) = \E[ V(u,U)X_0 ] = \E[V(u,U)\psi(U)] = \textsf{V}\psi(u),
\end{align*}
where $\psi(u):=\E[X_0\,|\,U=u]$. Combining this with \eqref{pf:VW4} shows that $M(u)=\overline{\textsf{W}}\mu_T(u)$ is given by \eqref{def:LQ-M}. \hfill \qedsymbol

\bibliographystyle{amsplain}
\bibliography{biblio_network}

\end{document}